\newtheorem{thm}{Theorem}
\newtheorem{prop}{Proposition}
\newtheorem{lem}{Lemma}
\newtheorem{remark}{Remark}
\newcommand{\vx}{\mathbf{x}}
\newcommand{\vu}{\mathbf{u}}
\newcommand{\vv}{\mathbf{v}}
\newcommand{\vw}{\mathbf{w}}
\newcommand{\vf}{\mathbf{f}}
\newcommand{\vg}{\mathbf{g}}
\newcommand{\vn}{\mathbf{n}}
\newcommand{\vc}{\mathbf{c}}
\newcommand{\vb}{\mathbf{b}}
\newcommand{\va}{\mathbf{a}}
\newcommand{\vd}{\mathbf{d}}
\newcommand{\vD}{\boldsymbol{D}}
\newcommand{\vF}{\boldsymbol{F}}
\newcommand{\vI}{\boldsymbol{I}}
\newcommand{\vM}{\boldsymbol{M}}
\newcommand{\vR}{\boldsymbol{R}}
\newcommand{\vT}{{\boldsymbol{T}}}
\newcommand{\vS}{\boldsymbol{S}}
\newcommand{\vB}{\boldsymbol{B}}
\newcommand{\vC}{\boldsymbol{C}}
\newcommand{\avrg}[1]{{\left\{\kern-0.5ex\left\{ #1 \right\}\kern-0.5ex\right\}}} 
\newcommand{\Th}{\mathcal{T}_h} 
\newcommand{\tr}{\mbox{tr}} 
\newcommand{\di}{\mathop{\rm div}\nolimits} 
\newcommand{\rot}{\mathop{\rm rot}\nolimits} 
\newcommand{\Vdiv}{\mathcal{V}}
\newcommand{\M}{\mathcal{M}}
\newcommand{\G}{\mathcal{G}}
\newcommand{\A}{\mathcal{A}}
\def\restriction#1#2{\mathchoice
	{\setbox1\hbox{${\displaystyle #1}_{\scriptstyle #2}$}
		\restrictionaux{#1}{#2}}
	{\setbox1\hbox{${\textstyle #1}_{\scriptstyle #2}$}
		\restrictionaux{#1}{#2}}
	{\setbox1\hbox{${\scriptstyle #1}_{\scriptscriptstyle #2}$}
		\restrictionaux{#1}{#2}}
	{\setbox1\hbox{${\scriptscriptstyle #1}_{\scriptscriptstyle #2}$}
		\restrictionaux{#1}{#2}}}
\def\restrictionaux#1#2{{#1\,\smash{\vrule height .8\ht1 depth .85\dp1}}_{\,#2}} 
\numberwithin{equation}{section}
\title{Finite Element Approximation of Steady Flows\\ of Colloidal Solutions}
\author{Andrea Bonito\thanks{Texas A\&M University, Department of Mathematics, 3368 TAMU, College Station, TX 77843-3368, USA. email: \texttt{bonito@math.tamu.edu}. Supported by the NSF Grant DMS-1817691;}
	, Vivette Girault\thanks{Sorbonne-Universit\'e, CNRS, Universit\'e de Paris, Laboratoire Jacques-Louis Lions (LJLL), F-75005 Paris, France. email: \texttt{girault@ann.jussieu.fr};}
	, Diane Guignard\thanks{University of Ottawa, Department of Mathematics and Statistics, 150 Louis-Pasteur Pvt, Ottawa, ON, Canada K1N 6N5. email: \texttt{dguignar@uottawa.ca};}
	, \\Kumbakonam R. Rajagopal\thanks{Texas A\&M University, Department of Mechanical Engineering, College Station, TX 77843-3123, USA. email: \texttt{krajagopal@tamu.edu};}
	, and Endre S\"uli\thanks{Mathematical Institute, University of Oxford, Andrew Wiles Building, Woodstock Road, Oxford OX2 6GG, United Kingdom. email: \texttt{suli@maths.ox.ac.uk}.}}
\begin{document}

\maketitle

\begin{abstract}
We consider the mathematical analysis and numerical approximation of a system of nonlinear partial differential equations that arises in models that have relevance to  steady isochoric flows of colloidal suspensions. The symmetric velocity gradient is assumed to be a monotone nonlinear function of the deviatoric part of the Cauchy stress tensor. We prove the existence of a weak solution to the problem, and under the additional assumption that the nonlinearity involved in the constitutive relation is Lipschitz continuous we also prove uniqueness of the weak solution. We then construct mixed finite element approximations of the system using both conforming and nonconforming finite element spaces. For both of these we prove the convergence of the method to the unique weak solution of the problem, and in the case of the conforming method we provide a bound on the error between the analytical solution and its finite element approximation in terms of the best approximation error from the finite element spaces. We propose first a Lions--Mercier type iterative method and next a classical fixed-point algorithm to solve the finite-dimensional problems resulting from the finite element discretisation of the system of nonlinear partial differential equations under consideration and present numerical experiments that illustrate the practical performance of the proposed numerical method.
\end{abstract}

\noindent
\textit{Keywords:} Non-Newtonian fluids, implicit constitutive theory, existence of weak solutions, mixed finite element approximation, convergence analysis

\smallskip

\noindent
\textit{2020
Mathematics Subject Classification.}  Primary 35Q35, 65N12, 76D03, 76M10; Secondary 76A05

\section{Introduction} \label{sec:S_Intro}

The classical incompressible Navier--Stokes constitutive equation and its usual generalisations, the constitutive relations for the incompressible Stokesian fluid, are explicit expressions for the Cauchy stress in terms of the symmetric part of the velocity gradient. The Stokesian fluid is defined by the constitutive expression
\begin{equation} \label{eq:Stokesian}
\vT = -p\vI + \vf(\vD),
\end{equation}
where $\vT$  is the Cauchy stress, $-p\vI$ is the indeterminate part of the stress due to the constraint of incompressibility and $\vD$ is the symmetric part of the velocity gradient, $\vD= \frac{1}{2}(\nabla \vu +
(\nabla \vu)^{t})$. The incompressible Navier--Stokes fluid is a special sub-class of \eqref{eq:Stokesian} that is linear in the symmetric part of the velocity gradient and is defined through:
\begin{equation} \label{eq:Navier--Stokes}
\vT = -p\vI + 2\mu \vD,
\end{equation}
where $\mu$ is the viscosity of the fluid. Power-law fluids are another popular sub-class of \eqref{eq:Stokesian}, the power-law fluid being defined through the constitutive equation
\begin{equation} \label{eq:power-law}
\vT = -p\vI + 2\mu_0 \big(1+\alpha{\rm tr}(\vD^2)\big)^m \vD,
\end{equation}
where $\mu_0$  and  $\alpha$ are positive constants and $m$ is  a  constant; if $m$ is zero we recover the Navier--Stokes fluid model, if it is negative we have a shear-thinning fluid model and if it is positive we have a shear-thickening fluid model.
There are however many fluids that cannot be described by constitutive equations of the  form \eqref{eq:Stokesian} but require ``relations", in the true mathematical sense of the term, between the Cauchy stress and the symmetric part of the velocity gradient. Implicit constitutive relations that involve higher time derivatives of the stress and the symmetric part of the velocity gradient have been proposed to describe the response of non-Newtonian fluids that exhibit  viscoelastic response\footnote{While the Maxwell fluid (see Maxwell (1866)~\cite{ref:Maxwell1866}) is defined through a constitutive relation involving the derivative of the stress, it is not an implicit model in that the symmetric part of the velocity gradient can be explicitly defined in terms of the stress and the time derivative of the stress.}  (see Burgers (1939)~\cite{ref:Burgers39}, Oldroyd (1950)~\cite{ref:Oldroyd50}); that is fluids that exhibit phenomena like stress relaxation. However, purely implicit algebraic relationship between the stress and the symmetric part of the velocity gradient were not considered to describe non-Newtonian fluids until recently. Such models are critical if one is interested in describing the response of fluids which do not exhibit viscoelasticity but whose material properties depend on the mean value of the stress and the shear rate, a characteristic exhibited by many fluids and colloids, as borne out by numerous experiments.
Consider for example an incompressible fluid whose viscosity depends on the mechanical pressure\footnote{The terminology ``pressure"  is often misused, especially in nonlinear fluids; for a detailed discussion of the same see Rajagopal (2015)~\cite{ref:Raj15}.}  (mean value of the stress) and is shear-thinning, whose constitutive relation takes the form
\begin{equation} \label{eq:NLconst_rel}
\vT=-p\vI+2\mu\big(p,{\rm tr}(\vD^2)\big)\vD.
\end{equation}
Since ${\rm tr}(\vD) = \di(\vu) =0$,
\begin{equation} \label{eq:tr_T=p/3}
 {\rm tr} (\vT) = - 3p, \quad \mbox{i.e.,} \quad p=-\frac{1}{3}{\rm tr} (\vT),
\end{equation}
the above equation takes the form
\begin{equation} \label{eq:tr_T=p/3.bis}
\vT =\frac{1}{3}({\rm tr} (\vT)) \vI+ 2\mu\big(\frac{1}{3} {\rm tr} (\vT),{\rm tr}(\vD^2 )\big)\vD.
\end{equation}
(The factor 1/3 is related to the number of space dimensions $d=3$; in two dimensions it would be replaced by 1/2.) The above expression is of the form
\begin{equation} \label{eq:implicit1}
\vf(\vT,\vD)={\bf 0},
\end{equation}
which is an implicit relationship between the stress and the symmetric part of the velocity gradient. Rajagopal (2003)~\cite{Raj2003}, (2006)~\cite{ref:Raj2006} introduced the implicit relationship of the above form (and also the much more general implicit relationship between the history of the stress and the history of the deformation gradient) to describe materials whose properties depend upon the pressure and the shear rate. In fact, the properties of all fluids depend upon the pressure: it is just a matter of how large the variation of the pressure is in order for one to take the variation of the properties into account. The book by Bridgman (1931)~\cite{ref:Bridgman31} entitled ``Physics of High Pressures" provides copious references to the experimental literature before 1931 on the variation of the viscosity of fluids with pressure, and one can find recent references to the experimental  literature  on  the dependence of viscosity on pressure in M\'alek and Rajagopal (2006)~\cite{ref:Malek_Raj06}. Stokes (1845)~\cite{ref:Stokes1845} recognised that the viscosity of fluids varies with pressure, but in the case of sufficiently slow flows in channels and pipes he assumed that the viscosity could be considered a constant. Suffice to say, constitutive relations of the class \eqref{eq:implicit1} are necessary to describe the response of fluids whose viscosity depends on the pressure.
Also as mentioned earlier, the implicit constitutive relation \eqref{eq:implicit1} is useful to describe the behaviour of colloids. Recently, Perl\'acov\'a and Pr\v{u}\v{s}a (2015)~\cite{ref:PerlacPrus15} (see also LeRoux and Rajagopal (2013)~\cite{ref:LeRou_Raj13}) used an implicit model belonging to a sub-class of \eqref{eq:implicit1} to describe the response of colloidal solutions as presented in the experimental work of Boltenhagen {\it et al.} (1997)~\cite{ref:Bol_Hu_Mat_Pi97},
Hu {\it et al.} (1998)~\cite{ref:Hu_Bolt_Mat_Pin98}, Lopez-Diaz {\it et al.} (2010)~\cite{ref:Lop_Sar_Gar_Cas10} among others.
Notice that while one always expresses the incompressible Navier--Stokes fluid by the representation \eqref{eq:Navier--Stokes}, it is perfectly reasonable to describe it as
\begin{equation} \label{eq:NS2}
\vD=\varphi \vI + \frac{1}{2\mu} \vT, \quad \mbox{where}\  \varphi=\frac{p}{2\mu}.
\end{equation}
In fact, it is the representation \eqref{eq:NS2}  that is in keeping with causality as the stress is the cause and the velocity and hence its gradient is the effect, and this fact cannot be overemphasised. Such a representation would imply that the Stokes assumption that is often appealed to is incorrect (see Rajagopal (2013)~\cite{ref:Raj13} for a detailed discussion of the same). M\'alek {et al.} (2010)~\cite{ref:Mal_Pru_Raj} generalised \eqref{eq:NS2} to stress power-law fluids, namely constitutive relations of the form:
\begin{equation} \label{eq:powerlaw2}
\begin{split}
\vT &= -p\vI + \vT^\bd,\\
\vD &= \gamma \big[1+\beta {\rm tr}((\vT^\bd)^2 )\big]^n \vT^\bd,
\end{split}
\end{equation}
where $\vT^\bd$ is the deviatoric part of the Cauchy stress, $\gamma$ and $\beta$ are positive constants, and $n$ is a constant that can be positive, negative or zero. The constitutive relation \eqref{eq:powerlaw2} is capable of describing phenomena that the classical power-law models are incapable of describing. For instance, the constitutive models \eqref{eq:powerlaw2} can describe limiting strain rate as well as fluids which allow the possibility of the strain rate initially increasing with stress and later decreasing with stress; both such responses cannot be described by the classical power-law fluid model \eqref{eq:power-law} (see the discussion in M\'alek {\it et al.} (2010)~\cite{ref:Mal_Pru_Raj} with regard to the difference in the response characteristics of the stress power-law fluid and the classical power-law fluid).
We are interested in a further generalization of the constitutive relation of the form \eqref{eq:powerlaw2} that is appropriate for describing the response of colloidal solutions. This constitutive relation takes the form:
\begin{equation} \label{eq:powerlaw3}
\vD = \Big\{\gamma\big[1+\beta {\rm tr}((\vT^\bd)^2 )\big]^n + \alpha \Big\} \vT^\bd,
\end{equation}
where $\alpha$, $\beta$, and $\gamma$ are positive constants, $n$ is a real number, and $\vT^\bd$ is the deviatoric part of the Cauchy stress. The shear stress in a fluid undergoing simple shear flow, that is described by the constitutive relation given above, increases from zero to a maximum, then decreases to a local minimum, and then increases monotonically as the shear stress increases from zero. As discussed by Le Roux and Rajagopal~\cite{ref:LeRou_Raj13}, and Perl\'acov\'a and Pr\v{u}\v{s}a~\cite{ref:PerlacPrus15}, many colloids exhibit such behavior. The constitutive relation that we introduce first in \eqref{pb:S_ENL2} and next in \eqref{pb:NS_EL} includes \eqref{eq:powerlaw3} as a special sub-class. It can be posed within a Hilbert space setting owing to the presence of the coefficient $ \alpha$ in \eqref{eq:powerlaw3}, but nevertheless,  it is a challenging problem as it involves two nonlinearities: the monotone part in the constitutive relation and the inertial (convective) term. The problem without the inertial term, see Subsection \ref{sec:S_ENL} below, has already been analysed in \cite{ref:BGS18}, while the analysis of the steady-state incompressible Navier--Stokes equations is well-established, see for instance \cite{Tem,ref:GiR}. With both nonlinearities present in the model, proving the existence of a weak  solution, for instance, to the best of our knowledge cannot be done by simply coupling the techniques used for these two problems, namely the Browder--Minty theorem and the Galerkin method combined with Brouwer's fixed point theorem and a weak compactness argument. More refined arguments are needed; they are crucial to the proofs  of Lemmas \ref{lem:conv_product} and \ref{lem:T_to_G(u)} below.

This work is organised as follows. The notation and the functional-analytic setting are recalled in the next subsection. In Section \ref{sec:S_EL}, both linear and fully nonlinear versions of the formulation are briefly analysed for the Stokes system, i.e., without the inertial (convective) term. The theoretical analysis of the complete nonlinear system is carried out in Section \ref{sec:NS_ENL}. The main results of this section are Theorem~\ref{thm:existence1} for the existence of a solution and Proposition~\ref{lem:uniqueness} for the uniqueness of a solution under additional assumptions on the input data. In Section~\ref{sec:NS_ENL_App}, conforming  finite element approximations of these models are proposed and error estimates are derived. The cases of both simplicial and hexahedral elements are discussed.  The analysis of the latter is less satisfactory as it requires subdivisions consisting of parallelepipeds and suffers from a higher computational cost. This motivates the introduction of nonconforming approximations in Section \ref{sec:NS_ENL_AppNC}.  In Section \ref{sec:numerics},  two decoupling algorithms are presented and compared: a Lions--Mercier algorithm adapted to a system with a monotone part and an elliptic part, and a classical fixed-point algorithm alternating between the approximation of a Navier-Stokes system and the nonlinear constitutive relation for the stress. Numerical experiments are performed with conforming finite elements on a square mesh in two dimensions. The theoretically  established convergence of the scheme is confirmed and convergence of both decoupled algorithms is observed.


\subsection{Notation and preliminaries} \label{subsec:notation}

Let $\Omega\subset\mathbb{R}^d$, $d\in\{2,3\}$, be a bounded, open, simply connected Lipschitz domain. We consider the function spaces
\begin{equation} \label{def:spaces}
Q:=L_0^2(\Omega), \quad V:=H_0^1(\Omega)^d \quad \mbox{and} \quad M:=\{\vS\in L^2(\Omega)_{{\rm sym}}^{d\times d}: \, \tr(\vS)=0\},
\end{equation}
for the pressure, the velocity, and the deviatoric stress tensor, respectively. As usual,
\[L^2_0(\Omega) = \left\{q \in L^2(\Omega):\, \int_\Omega q = 0\right\},\]
the zero mean value constraint being introduced to fix the undetermined additive constant in the mechanical pressure.
Here the subscript sym indicates that the $d \times d$ tensors under consideration are assumed to be symmetric. Henceforth, the symmetric gradient of the velocity field $\vv$ (or, briefly, symmetric velocity gradient) will be denoted by
\begin{equation} \label{def:strain}
\vD(\vv) :=\frac{1}{2}(\nabla\vv+(\nabla\vv)^t)
\end{equation}
and the deviatoric part of a  $d \times d$ tensor $\vS$ is defined by
\begin{equation} \label{def:deviator}
\vS^{\bd}:=\vS-\frac{1}{d}\tr(\vS)\vI
\end{equation}
with $\vI$ the $d\times d$ identity tensor;  thus the trace of $\vS^{ \bd}$ is zero.
We denote by $\Vdiv$ the subspace of $V$ consisting of all divergence-free functions contained in $V$; that is,
\begin{equation} \label{def:space_Vdiv}
\Vdiv:=\{\vv\in V: \, \di(\vv)=0\}.
\end{equation}
For vector-valued functions $\vv:\Omega\rightarrow\mathbb{R}^d$, we write
$$\|\vv\|_{L^2(\Omega)}:=\|\, |\vv| \,\|_{L^2(\Omega)} \quad \mbox{and} \quad \|\vv\|_{L^{\infty}(\Omega)}:=\|\, |\vv| \,\|_{L^{\infty}(\Omega)}$$
with $|\cdot |$ signifying the Euclidean norm on $\mathbb{R}^d$, while for tensor-valued functions $\vS:\Omega\rightarrow\mathbb{R}^{d\times d}$, we define
$$\|\vS\|_{L^2(\Omega)}:=\| \, |\vS| \, \|_{L^2(\Omega)},$$
where now
$$|\vS|:=\sqrt{\vS:\vS}$$
is the Frobenius norm of $\vS$. Clearly, $M$ is a Hilbert space with this norm. We recall the Poincar\'e and  Korn inequalities, which are, respectively, the following: there exist positive constants $C_P$ and $C_K$ such that
\begin{equation} \label{def:poincare}
\|v\|_{L^2(\Omega)}\leq C_P\|\nabla v\|_{L^2(\Omega)} \quad \forall\, v\in H_0^1(\Omega)
\end{equation}
and
\begin{equation} \label{def:korn}
\|\nabla \vv\|_{L^2(\Omega)}\leq C_K\|\vD(\vv)\|_{L^2(\Omega)} \quad \forall\, \vv\in V.
\end{equation}
We endow $V$ (and $\Vdiv$) with the norm
\begin{equation} \label{def:equivnorm}
\|\cdot\|_V:=\|\vD(\cdot)\|_{L^2(\Omega)}.
\end{equation}
Both $V$ and $\Vdiv$ are Hilbert spaces with this norm, because $\|\cdot\|_V$ is equivalent to both the $H^1(\Omega)^{d\times d}$ norm and the $H^1(\Omega)^{d \times d}$ semi-norm, thanks to \eqref{def:poincare}, \eqref{def:korn} and the trivial relation $\|\vD(\vv)\|_{L^2(\Omega)}\leq \|\nabla\vv\|_{L^2(\Omega)}$.

\section{Stokes system with linear and nonlinear constitutive relations} \label{sec:S_EL}

In this section we study two preliminary model problems without the inertial term; the first one simply reduces to the Stokes system, while the second model problem involves a monotone nonlinearity treated by the Browder--Minty approach.


\subsection{The Stokes system} \label{subsec:Stokes}

Let us consider the problem
\begin{equation} \label{pb:S_EL}
\left\{\begin{array}{rcll}
-\di(\vT) & =&  \vf &\quad \mbox{in } \Omega, \\
\vD(\vu) & =&   \alpha\vT^\bd & \quad \mbox{in } \Omega, \\
\di(\vu) & =&  0 &\quad \mbox{in } \Omega, \\
\vu & =&  \mathbf{0} &\quad \mbox{on } \partial\Omega,
\end{array}
\right.
\end{equation}
where $\vf:\Omega\rightarrow\mathbb{R}^d$ is a prescribed external force, $\vD(\vu)$ is defined by \eqref{def:strain}, the unknown tensor $\vT$ is symmetric, and $\alpha$ is a given positive constant, the reciprocal of the viscosity coefficient.  Here, we assume that $\vf\in L^2(\Omega)^d$ for simplicity, but  a similar analysis holds for the general case $\vf\in V'=H^{-1}(\Omega)^d$; see for instance Remark \ref{rem:1} in Section \ref{sec:NS_ENL}.  By decomposing the Cauchy stress $\vT$ as $\vT=\vT^{\bd} + \frac{1}{d} \tr(\vT) \vI$ and inserting this in the first equation of \eqref{pb:S_EL} we arrive at the following equivalent problem:
\begin{equation} \label{pb:S_EL2}
\left\{\begin{array}{rcll}
-\di(\vT^\bd)-\frac{1}{d}\nabla\tr(\vT) & = & \vf & \mbox{in } \Omega, \\
\vD(\vu) & = &  \alpha\vT^\bd & \mbox{in } \Omega, \\
\di(\vu) & = & 0 & \mbox{in } \Omega, \\
\vu & = & \mathbf{0} & \mbox{on } \partial\Omega,
\end{array}\right.
\end{equation}
which we recognise to be the Stokes system where the mechanical pressure (mean normal stress) is $p:=-\frac{1}{d}\tr(\vT)$.
Recalling the spaces $M,V,Q$ defined in \eqref{def:spaces} and using the relation
$$\vD(\vv):\vS=\nabla\vv:\vS,$$
which holds\footnote{For any $\vR,\vS\in\mathbb{R}^{d\times d}$, with $\vS$ symmetric, we have that $\vS:\vR=\left(\frac{\vS+\vS^{t}}{2}\right):\vR=\frac{1}{2}\vS:\vR+\frac{1}{2}\vS^{t}:\vR=\frac{1}{2}\vS:\vR+\frac{1}{2}\vS:\vR^{t}=\vS:\left(\frac{\vR+\vR^{t}}{2}\right)$.} for any symmetric tensor $\vS$, the weak formulation of problem \eqref{pb:S_EL2} can be written as follows: find a triple $(\vT^\bd,\vu,p)\in M\times V\times Q$ such that
\begin{alignat}{2} \label{pb:S_EL2_weak}
\begin{aligned}
\int_{\Omega}\vT^\bd:\vD(\vv) - \int_{\Omega}p\di(\vv) & =  \displaystyle{\int_{\Omega}}\vf\cdot\vv && \quad\forall\, \vv\in V, \\
 \alpha\int_{\Omega}\vT^\bd:\vS - \int_{\Omega}\vS:\vD(\vu) & =  0 && \quad \forall\, \vS\in M, \\
-\int_{\Omega}q\di(\vu)  & =  0 && \quad \forall\, q\in Q.
\end{aligned}
\end{alignat}
For any $\vS\in M$, $\vv\in V$, and $q\in Q$, we set
\begin{align*}
	b_1(\vS,\vv) & :=  \int_{\Omega}\vS:\vD(\vv), \\
	b_2(\vv,q) & :=  -\int_{\Omega}q\di(\vv).
\end{align*}
As is usual for the Stokes problem, the unknown pressure can be eliminated from \eqref{pb:S_EL2_weak} by restricting the test functions $\vv$ to $\Vdiv$. In addition, the variable $\vu$ can also be eliminated by treating the first line of \eqref{pb:S_EL2_weak} as a constraint, thus leading to an equivalent (reduced) problem for which the two variables $p$ and $\vu$ are eliminated. The equivalence is based on the following (inf-sup) conditions

\begin{equation} \label{infsup:b1}
\inf_{\vv\in \Vdiv}\sup_{\vS\in M}\frac{b_1(\vS,\vv)}{\|\vS\|_{L^2(\Omega)}\|\vD(\vv)\|_{L^2(\Omega)}}\geq 1
\end{equation}
and
\begin{equation} \label{infsup:b2}
\exists\beta>0: \quad \inf_{q\in Q}\sup_{\vv\in V}\frac{b_2(\vv,q)}{\|q\|_{L^2(\Omega)}\|\vD(\vv)\|_{L^2(\Omega)}}\geq\inf_{q\in Q}\sup_{\vv\in V}\frac{b_2(\vv,q)}{\|q\|_{L^2(\Omega)}\|\nabla\vv\|_{L^2(\Omega)}}\geq \beta,
\end{equation}
where we have used that $\|\vD(\vv)\|_{L^2(\Omega)}\leq \|\nabla\vv\|_{L^2(\Omega)}$. It is well-known that the spaces $V$ and $Q$ defined in \eqref{def:spaces} satisfy the inf-sup condition \eqref{infsup:b2}, see for instance~\cite{ref:GiR}, while the relation \eqref{infsup:b1} can be easily shown by  observing that, for a given $\vv\in \Vdiv$, we have $\vR:=\vD(\vv)\in M$ since $\tr(\vR)=\tr(\vD(\vv))=\di(\vv)=0$ and $\vD(\vv)$ is symmetric. Therefore, $b_1(\vR,\vv)=\|\vD(\vv)\|_{L^2(\Omega)}^2$ and thus
\begin{align*}
\sup_{\vS\in M}\frac{b_1(\vS,\vv)}{\|\vS\|_{L^2(\Omega)}}\geq \frac{b_1(\vR,\vv)}{\|\vR\|_{L^2(\Omega)}}=\|\vD(\vv)\|_{L^2(\Omega)}.
\end{align*}
We can then eliminate the incompressibility constraint by seeking $\vu\in \Vdiv$, yielding the (partially reduced) problem: find $(\vT^\bd,\vu)\in M\times \Vdiv$ such that
\begin{alignat}{2} \label{pb:S_EL2_weak2}
\begin{aligned}
\int_{\Omega} \vT^\bd:\vD(\vv) & =  \displaystyle{\int_{\Omega}}\vf\cdot\vv && \quad \forall\, \vv\in \Vdiv, \\[0.25cm]
 \alpha\int_{\Omega}\vT^\bd:\vS - \int_{\Omega}\vS:\vD(\vu) & =  0 &&\quad \forall\, \vS\in M.
\end{aligned}
\end{alignat}
Clearly, each solution of \eqref{pb:S_EL2_weak} satisfies \eqref{pb:S_EL2_weak2}. Conversely, it follows from the inf-sup condition (\ref{infsup:b2}) that for any solution $(\vT^\bd,\vu)$ of \eqref{pb:S_EL2_weak2} there exists a unique $p\in Q$ such that $(\vT^\bd,\vu,p)$ is the solution of \eqref{pb:S_EL2_weak}; see~\cite{ref:GiR}. Hence these two problems are equivalent. Furthermore, we can eliminate the unknown $\vu$ by proceeding as follows; see~\cite{ref:BGS18}. First, we introduce the decomposition $M=\M\oplus\M^{\perp}$ with
\begin{equation} \label{eq:calM}
\M:=\{\vS\in M \,: \, b_1(\vS,\vv)=0 \quad \forall\, \vv\in\Vdiv\},
\end{equation}
the kernel of $b_1$, and
\begin{equation*}
\M^{\perp}:=\{\vS\in M \,: \, \int_{\Omega}\vS:\vR=0 \quad \forall\, \vR\in\M\}
\end{equation*}
its orthogonal complement in $M$, and we write $\vT^\bd=\vT_0^\bd+\vT_\vf^\bd$ with $\vT_0^\bd\in\M$ and $\vT_\vf^\bd\in\M^{\perp}$. The condition \eqref{infsup:b1} ensures the existence and uniqueness of $\vT_\vf^\bd\in\M^{\perp}$ satisfying
\begin{equation} \label{eq:Tf}
b_1(\vT_\vf^\bd,\vv) = \int_{\Omega}\vf\cdot\vv \quad \forall\, \vv\in\Vdiv \quad \mbox{and} \quad \|\vT_\vf^\bd\|_{L^2(\Omega)}\leq C_PC_K\|\vf\|_{L^2(\Omega)}
\end{equation}
with $C_P$ and $C_K$ the constants in Poincar\'e's and Korn's inequalities \eqref{def:poincare} and \eqref{def:korn}, respectively. We finally get the (fully reduced) problem: find $\vT_0^\bd\in\M$ such that
\begin{equation} \label{pb:S_EL2_weak3}
\begin{array}{rcll}
  \alpha\int_{\Omega}\vT_0^\bd:\vS  =  -\alpha\int_{\Omega}\vT_\vf^\bd:\vS  & \forall\, \vS\in \M.
\end{array}
\end{equation}
The well-posedness of problem \eqref{pb:S_EL2_weak3} follows from the Lax--Milgram lemma,  while its equivalence to the original problem \eqref{pb:S_EL2_weak} is guaranteed by \eqref{infsup:b1} and \eqref{infsup:b2}.

Of course, in this simple model with a linear constitutive relation,  $\vT_0^d=\mathbf{0}$ since the right-hand side of \eqref{pb:S_EL2_weak3} vanishes and $\alpha(\cdot,\cdot)_\Omega $  is an inner product on $\M$. However, the framework developed here will be used in the sequel in a more general setting.


\subsection{Stokes model with a nonlinear constitutive relation} \label{sec:S_ENL}

Next, we consider the following Stokes-like system with a nonlinear relation between the stress and the symmetric velocity gradient:
\begin{equation} \label{pb:S_ENL2}
\left\{\begin{array}{rcll}
-\di(\vT^\bd)-\frac{1}{d}\nabla\tr(\vT) & = & \vf & \mbox{in } \Omega, \\
\vD(\vu) & = &  \alpha\vT^\bd+\gamma\mu(|\vT^\bd|)\vT^\bd & \mbox{in } \Omega, \\
\di(\vu) & = & 0 & \mbox{in } \Omega, \\
\vu & = & \mathbf{0} & \mbox{on } \partial\Omega
\end{array}\right.
\end{equation}
with $\gamma$ a given positive constant, and where $\mu\in\mathcal{C}^1((0,+\infty))\cap\mathcal{C}^0([0,+\infty))$ is a given function satisfying

\begin{equation} \label{eqn:mu_diff}
\frac{{\rm d}}{{\rm d}a}(\mu(a)a)>0 \quad \forall\, a\in\mathbb{R}_{> 0}
\end{equation}
and
\begin{equation} \label{eqn:mu_pos_C1}
\mu(a) >  0 \quad \mbox{and} \quad \mu(a)a\leq C_1 \quad  \forall\, a \in \mathbb{R}_{\geq 0}
\end{equation}
for some positive constant $C_1$.
Since $\mu$ is continuous on any subinterval of $\mathbb{R}_{\geq 0}$, the second part of \eqref{eqn:mu_pos_C1} implies that $\mu$ is bounded above and we denote its maximum by $\mu_{\rm max}$,
\begin{equation} \label{eq:mumax}
0 < \mu(a) \le \mu_{\rm max}\quad \forall \, a \in \mathbb{R}_{\geq 0}.
\end{equation}
Moreover, proceeding as in the proof of \cite[Lemma 4.1]{BMRS14}, we deduce from \eqref{eqn:mu_diff} and \eqref{eqn:mu_pos_C1} that for any $\vR,\vS\in\mathbb{R}^{d\times d}$, the following monotonicity property hold:
\begin{eqnarray} \label{eqn:mu_mon}
(\mu(|\vR|)\vR-\mu(|\vS|)\vS):(\vR-\vS) \geq 0,
\end{eqnarray}
with equality if and only if $\vR=\vS$.

Introducing again $p:=-\frac{1}{d}\tr(\vT)$, the weak formulation of problem \eqref{pb:S_ENL2} reads as follows: find  a  triple $(\vT^\bd,\vu,p)\in M\times V\times Q$ such that
\begin{alignat}{2} \label{pb:S_ENL2_weak}
\begin{aligned}
\int_{\Omega}\vT^\bd:\vD(\vv) - \int_{\Omega}p\di(\vv)& =  \displaystyle{\int_{\Omega}}\vf\cdot\vv &&\quad \forall\, \vv\in V, \\[0.25cm]
\alpha\int_{\Omega}\vT^\bd:\vS + \gamma\int_{\Omega}\mu(|\vT^\bd|)\vT^\bd:\vS - \int_{\Omega}\vS:\vD(\vu) & =  0 &&\quad \forall\, \vS\in M, \\[0.25cm]
-\int_{\Omega}q\di(\vu) & =  0 &&\quad \forall\, q\in Q.
\end{aligned}
\end{alignat}

Proceeding exactly as in Section \ref{subsec:Stokes}, we first eliminate the pressure, and we thus deduce that problem \eqref{pb:S_ENL2_weak} is equivalent to the following problem: find $(\vT^\bd,\vu)\in M\times \Vdiv$ such that
\begin{alignat}{2} \label{pb:S_ENL2_weak2}
\begin{aligned}
\int_{\Omega}\vT^\bd:\vD(\vv) & =  \displaystyle{\int_{\Omega}}\vf\cdot\vv &&\quad \forall\, \vv\in \Vdiv, \\[0.25cm]
\alpha\int_{\Omega}\vT^\bd:\vS + \gamma\int_{\Omega}\mu(|\vT^\bd|)\vT^\bd:\vS - \int_{\Omega}\vS:\vD(\vu) & =  0 &&\quad \forall\, \vS\in M,
\end{aligned}
\end{alignat}
which is further equivalent to the following problem: find $\vT_0^\bd\in\M$ such that
\begin{equation} \label{pb:S_ENL2_weak3}
\begin{array}{rcll}
\alpha\int_{\Omega}(\vT_0^\bd+\vT_\vf^\bd):\vS + \gamma\int_{\Omega}\mu(|\vT_0^\bd+\vT_\vf^\bd|) (\vT_0^\bd+\vT_\vf^\bd):\vS  = 0 & \forall\, \vS\in \M,
\end{array}
\end{equation}
with $\vT_\vf^\bd\in\M^{\perp}$ the solution of \eqref{eq:Tf}.
The Browder--Minty theorem, see for instance~\cite{ref:Minty}, guarantees the existence of a solution to problem \eqref{pb:S_ENL2_weak3}. Indeed, let $\A:M\rightarrow M'$ be defined for $\vR,\vS\in M$ by
\begin{equation} \label{def:mapA}
\langle \A(\vR), \vS\rangle_M:= \alpha\int_{\Omega}\vR:\vS +  \gamma\int_{\Omega}\mu(|\vR|)\vR:\vS,
\end{equation}
where $\langle \cdot,\cdot \rangle_M$ denotes the  duality  pairing between $M$ and its dual space, $M'$.
It then easily follows that the mapping $\vT_0^\bd\mapsto\A(\vT_0^\bd+\vT_\vf^\bd)$ is bounded, monotone, coercive and hemi-continuous. By the Browder--Minty theorem these imply surjectivity of $\A$ and thereby existence of a solution,  while its uniqueness follows from the strict monotonicity of $\A$.

\section{Navier--Stokes with nonlinear constitutive relation} \label{sec:NS_ENL}

Now, we focus on our problem of interest, where a convective term is added  to the first equation of \eqref{pb:S_ENL2}, i.e., we consider the problem
\begin{equation} \label{pb:NS_EL}
\left\{\begin{array}{rcll}
(\vu\cdot\nabla)\vu-\di(\vT) & = & \vf & \mbox{in } \Omega, \\
\vD(\vu) & = &  \alpha\vT^\bd+\gamma\mu(|\vT^\bd|)\vT^\bd & \mbox{in } \Omega, \\
\di(\vu) & = & 0 & \mbox{in } \Omega, \\
\vu & = & \mathbf{0} & \mbox{on } \partial\Omega.
\end{array}\right.
\end{equation}
We prove a priori estimates, construct a solution, and give sufficient conditions for global uniqueness.


\subsection{Reformulation} \label{subsec:forms}

By introducing the pressure $p:=-\frac{1}{d}\tr(\vT)$, problem \eqref{pb:NS_EL} can be rewritten as follows:
\begin{equation} \label{pb:NS_EL_v2}
\left\{\begin{array}{rcll}
(\vu\cdot\nabla)\vu-\di(\vT^{\bd})+\nabla p & = & \vf & \mbox{in } \Omega, \\
\vD(\vu) & = &  \alpha\vT^\bd+\gamma\mu(|\vT^\bd|)\vT^\bd & \mbox{in } \Omega, \\
\di(\vu) & = & 0 & \mbox{in } \Omega, \\
\vu & = & \mathbf{0} & \mbox{on } \partial\Omega.
\end{array}\right.
\end{equation}
In order to bring forth an elliptic term on the left-hand side of the first equation of \eqref{pb:NS_EL_v2}, we rewrite  the second equation in \eqref{pb:NS_EL_v2} as
\begin{equation} \label{eqn:constitutive_rel}
\vT^\bd=\frac{1}{\alpha}\vD(\vu)-\frac{\gamma}{\alpha}\mu(|\vT^\bd|)\vT^\bd,
\end{equation}
and thus by substituting this relation into the first equation of \eqref{pb:NS_EL_v2} we get
\begin{equation} \label{pb:NS_EL_v3}
\left\{\begin{array}{rcll}
(\vu\cdot\nabla)\vu-\frac{1}{\alpha}\di(\vD(\vu))+\nabla p & = & \vf-\frac{\gamma}{\alpha}\di(\mu(|\vT^\bd|)\vT^\bd) & \mbox{in } \Omega, \\[\smallskipamount]
\alpha\vT^\bd+\gamma\mu(|\vT^\bd|)\vT^\bd & = &  \vD(\vu) & \mbox{in } \Omega, \\[\smallskipamount]
\di(\vu) & = & 0 & \mbox{in } \Omega, \\
\vu & = & \mathbf{0} & \mbox{on } \partial\Omega.
\end{array}\right.
\end{equation}
The weak formulation of \eqref{pb:NS_EL_v3} reads: find $(\vT^\bd,\vu,p)\in M\times V\times Q$ such that
\begin{align} \label{pb:weak_epsu}
\begin{aligned}
\int_{\Omega}[(\vu\cdot\nabla)\vu]\cdot\vv + \frac{1}{\alpha}\int_{\Omega}\vD(\vu):\vD(\vv)-\int_{\Omega}p\di(\vv) & =  \int_{\Omega}\vf\cdot\vv+\frac{\gamma}{\alpha}\int_{\Omega}\mu(|\vT^\bd|)\vT^\bd:\vD(\vv), \\
\alpha\int_{\Omega}\vT^\bd:\vS+\gamma\int_{\Omega}\mu(|\vT^\bd|)\vT^\bd:\vS & =  \int_{\Omega}\vD(\vu):\vS, \\
\int_{\Omega}q\di(\vu) & =  0
\end{aligned}
\end{align}
for all $(\vS,\vv,q)\in M\times V\times Q$.

As previously, we eliminate the pressure by restricting the test functions to $\Vdiv$, and we thus obtain the following equivalent reduced problem: find $(\vT^\bd,\vu)\in M\times \Vdiv$ such that
\begin{align}
\int_{\Omega}[(\vu\cdot\nabla)\vu]\cdot\vv + \frac{1}{\alpha}\int_{\Omega}\vD(\vu):\vD(\vv) & =  \int_{\Omega}\vf\cdot\vv+\frac{\gamma}{\alpha}\int_{\Omega}\mu(|\vT^\bd|)\vT^\bd:\vD(\vv), \label{pb:NS_EL_weakR1} \\
\alpha\int_{\Omega}\vT^\bd:\vS+\gamma\int_{\Omega}\mu(|\vT^\bd|)\vT^\bd:\vS & =  \int_{\Omega}\vD(\vu):\vS \label{pb:NS_EL_weakR2}
\end{align}
for all $(\vS,\vv)\in M\times \Vdiv$.

Interestingly, \eqref{pb:NS_EL_weakR1}, \eqref{pb:NS_EL_weakR2} can be further reduced by
observing that, given $\vu$, \eqref{pb:NS_EL_weakR2} uniquely determines $\vT^d$ thanks to the Browder--Minty theorem; see the end of Section \ref{sec:S_ENL}. Thus, we define the mapping $\G:\Vdiv\rightarrow M$ by $\vu\mapsto \vT^\bd$ with $\vT^\bd\in M$ being the unique solution of
\begin{equation} \label{eqn:def_map_G}
\langle \A(  \vT^\bd ), \vS\rangle_M =\int_{\Omega}\vD(\vu):\vS\quad \forall\, \vS\in M,
\end{equation}
where we recall that $\A$ is defined in \eqref{def:mapA}.  With this mapping, \eqref{pb:NS_EL_weakR1}, \eqref{pb:NS_EL_weakR2} is equivalent to the following problem: find $\vu \in \Vdiv$ such that
\begin{equation} \label{eq:reduced3}
\begin{split}
\int_{\Omega}[(\vu\cdot\nabla)\vu]\cdot\vv + \frac{1}{\alpha}\int_{\Omega}\vD(\vu):\vD(\vv)  &=  \int_{\Omega}\vf\cdot\vv
+\frac{\gamma}{\alpha}\int_{\Omega}\mu(|\G(\vu)|)\G(\vu):\vD(\vv).
\end{split}
\end{equation}	

Before embarking on the proof of existence of a solution  to problem \eqref{pb:NS_EL_weakR1}, \eqref{pb:NS_EL_weakR2} we establish a series of \emph{a priori} estimates under the assumption that a solution exists.


\subsection{\emph{A priori} estimates} \label{subsec:aprioribdd}

Assuming that problem \eqref{pb:NS_EL_weakR1}, \eqref{pb:NS_EL_weakR2} has a solution, the following \emph{a priori} estimates hold for any solution $(\vT^\bd,\vu) \in M \times \mathcal{V}$.

\begin{lem}(First \emph{a priori} estimates) \label{lem:apriori}
	Let $|\Omega|$ denote the measure of $\Omega$. Then,
	\begin{equation} \label{apriori_est:u}
	\|\vD(\vu)\|_{L^2(\Omega)} \leq \alpha C_PC_K\|\vf\|_{L^2(\Omega)}+\gamma C_1 |\Omega|^{\frac{1}{2}}
	\end{equation}
	and
	\begin{equation} \label{apriori_est:Td}
	\|\vT^\bd\|_{L^2(\Omega)} \leq C_PC_K\|\vf\|_{L^2(\Omega)}+\frac{\gamma}{\alpha} C_1 |\Omega|^{\frac{1}{2}}
	\end{equation}
	with $C_P$ and $C_K$ signifying the constants in Poincar\'e's and Korn's inequality, respectively, and $C_1$ the constant in \eqref{eqn:mu_pos_C1}.
\end{lem}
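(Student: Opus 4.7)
The natural approach is to test the reduced momentum equation \eqref{pb:NS_EL_weakR1} against $\vv = \vu \in \Vdiv$. Because $\vu$ is divergence-free with $\vu|_{\partial\Omega} = \mathbf{0}$, integration by parts gives the standard cancellation $\int_\Omega [(\vu\cdot\nabla)\vu]\cdot\vu = 0$, so after multiplying by $\alpha$ one is left with
\begin{equation*}
\|\vD(\vu)\|_{L^2(\Omega)}^2 = \alpha \int_\Omega \vf \cdot \vu + \gamma \int_\Omega \mu(|\vT^\bd|)\vT^\bd : \vD(\vu).
\end{equation*}

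I would then bound the two right-hand terms separately. For the forcing, Cauchy--Schwarz combined with the Poincar\'e and Korn inequalities \eqref{def:poincare}--\eqref{def:korn} gives
\begin{equation*}
\alpha \int_\Omega \vf\cdot\vu \,\le\, \alpha C_P C_K \|\vf\|_{L^2(\Omega)}\|\vD(\vu)\|_{L^2(\Omega)}.
\end{equation*}
For the nonlinear stress term, the key observation is the pointwise bound in \eqref{eqn:mu_pos_C1} on the \emph{product} $\mu(a)a$ (as opposed to $\mu$ alone): this yields $\mu(|\vT^\bd|)|\vT^\bd| \le C_1$ almost everywhere in $\Omega$. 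Applying Cauchy--Schwarz pointwise (for the Frobenius inner product) and then the $L^1$--$L^2$ bound with the constant function $1$,
\begin{equation*}
\gamma \int_\Omega \mu(|\vT^\bd|)\vT^\bd : \vD(\vu) \,\le\, \gamma C_1 \int_\Omega |\vD(\vu)| \,\le\, \gamma C_1 |\Omega|^{1/2} \|\vD(\vu)\|_{L^2(\Omega)}.
\end{equation*}
Dividing through by $\|\vD(\vu)\|_{L^2(\Omega)}$ (the case of its equality to zero being trivial) then produces \eqref{apriori_est:u}.

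For the bound on the stress I would revisit the constitutive law \eqref{eqn:constitutive_rel}, which rearranges to the pointwise identity $\vD(\vu) = (\alpha + \gamma\mu(|\vT^\bd|))\vT^\bd$. Since $\mu\ge 0$ by \eqref{eqn:mu_pos_C1}, this immediately gives the pointwise inequality $\alpha|\vT^\bd| \le |\vD(\vu)|$; taking $L^2$ norms and inserting \eqref{apriori_est:u} produces \eqref{apriori_est:Td}.

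The argument is essentially the standard energy estimate for a Stokes-type problem, with the inertial term annihilated by testing against the divergence-free solution itself. The only genuine insight is recognising that the hypothesis \eqref{eqn:mu_pos_C1} gives control of the \emph{product} $\mu(|\vT^\bd|)|\vT^\bd|$, which is precisely what lets the nonlinear stress contribution be absorbed into a constant multiple of $\|\vD(\vu)\|_{L^2(\Omega)}$ rather than into a product $\|\vT^\bd\|_{L^2(\Omega)}\|\vD(\vu)\|_{L^2(\Omega)}$ that would require an implicit, e.g.\ Gr\"onwall-type, closing step on $\|\vT^\bd\|_{L^2(\Omega)}$.
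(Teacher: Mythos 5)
Your proof is correct and follows essentially the same route as the paper: the velocity bound \eqref{apriori_est:u} is obtained exactly as in the paper's proof, by testing \eqref{pb:NS_EL_weakR1} with $\vv=\vu$, using the cancellation \eqref{eqn:conv_zero}, Poincar\'e--Korn for the forcing term, and the product bound $\mu(a)a\le C_1$ from \eqref{eqn:mu_pos_C1} for the nonlinear stress term. The only (harmless) deviation is the intermediate estimate $\alpha\|\vT^\bd\|_{L^2(\Omega)}\le\|\vD(\vu)\|_{L^2(\Omega)}$: the paper gets it variationally by taking $\vS=\vT^\bd$ in \eqref{pb:NS_EL_weakR2} and using $\mu>0$, whereas you read it off the a.e.\ identity $(\alpha+\gamma\mu(|\vT^\bd|))\vT^\bd=\vD(\vu)$, which is legitimate here (the residual of \eqref{pb:NS_EL_weakR2} lies in $M$, so it vanishes a.e.; this is precisely the identity the paper itself invokes in the proof of Lemma~\ref{lem:apriori2}).
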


\begin{proof}
	Taking $\vS=\vT^\bd$ in \eqref{pb:NS_EL_weakR2} yields
	\begin{equation*}
	\alpha\|\vT^\bd\|_{L^2(\Omega)}^2+\gamma\int_{\Omega}\mu(|\vT^\bd|)|\vT^\bd|^2 = \int_{\Omega}\vD(\vu):\vT^\bd\leq \|\vD(\vu)\|_{L^2(\Omega)}\|\vT^\bd\|_{L^2(\Omega)}.
	\end{equation*}
	Using then the positivity of $\mu$, see \eqref{eqn:mu_pos_C1}, we get
	\begin{equation} \label{eqn:apriori_step1}
	\|\vT^\bd\|_{L^2(\Omega)} \leq \frac{1}{\alpha}\|\vD(\vu)\|_{L^2(\Omega)}.
	\end{equation}
	To obtain a bound for $\vu$, we recall the well-known relation
	\begin{equation} \label{eqn:conv_zero}
	\int_{\Omega}[(\vu\cdot\nabla)\vv]\cdot\vv=0 \quad \forall\, \vu\in \Vdiv, \,\, \forall\, \vv\in V,
	\end{equation}
	which is easily obtained by integration by parts, as follows:
	\begin{equation*}
	 \int_{\Omega}[(\vu\cdot\nabla)\vv]\cdot\vv=\frac{1}{2}\int_{\Omega}\vu\cdot\nabla(|\vv|^2)=-\frac{1}{2}\int_{\Omega}\di(\vu)|\vv|^2=0.
	\end{equation*}
	Therefore, taking $\vv=\vu$ in \eqref{pb:NS_EL_weakR1} and using \eqref{eqn:mu_pos_C1} we obtain
	\begin{align*}
	\frac{1}{\alpha}\|\vD(\vu)\|_{L^2(\Omega)}^2 & =  \int_{\Omega}\vf\cdot\vu+\frac{\gamma}{\alpha}\int_{\Omega}\mu(|\vT^\bd|)\vT^\bd:\vD(\vu) \\
	& \leq  \left(C_PC_K\|\vf\|_{L^2(\Omega)}+\frac{\gamma}{\alpha} C_1 |\Omega|^{\frac{1}{2}}\right)\|\vD(\vu)\|_{L^2(\Omega)},
	\end{align*}
	from which we directly deduce \eqref{apriori_est:u}; \eqref{apriori_est:Td} follows by applying \eqref{apriori_est:u} to \eqref{eqn:apriori_step1}.
\end{proof}

\begin{lem}(Second \emph{a priori} estimates) \label{lem:apriori2}
	Recall that $\mu_{\max}:=\sup_{s\in [0,\infty)}\mu(s)$. We also have
	\begin{equation} \label{apriori_est:u2}
	\|\vD(\vu)\|_{L^2(\Omega)} \leq (\alpha+\gamma\mu_{\max})C_PC_K\|\vf\|_{L^2(\Omega)}
	\end{equation}
	and
	\begin{equation} \label{apriori_est:Td2}
	\|\vT^\bd\|_{L^2(\Omega)} \leq \frac{1}{\alpha}(\alpha+\gamma\mu_{\max})C_PC_K\|\vf\|_{L^2(\Omega)}.
	\end{equation}	
\end{lem}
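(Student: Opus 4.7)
The plan is to exploit the constitutive relation more cleverly than in Lemma~\ref{lem:apriori} by first upgrading it to a \emph{pointwise} identity, and then using it to replace the nonlinear product $\mu(|\vT^\bd|)\vT^\bd:\vD(\vu)$ by a nonnegative weighted density in $|\vD(\vu)|^2$. This avoids the loss incurred in \eqref{apriori_est:u} where $\mu(|\vT^\bd|)|\vT^\bd|$ was bounded by $C_1$, replacing it with the linear bound through $\mu_{\max}$ that is compatible with the quantity $\int \vf\cdot\vu$.

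First I would observe that the tensor field $\vR:=\alpha\vT^\bd+\gamma\mu(|\vT^\bd|)\vT^\bd-\vD(\vu)$ lies in $M$, because $\vT^\bd$ is symmetric and traceless and, thanks to $\vu\in\Vdiv$, so is $\vD(\vu)$. Equation \eqref{pb:NS_EL_weakR2} states precisely that $\int_\Omega \vR:\vS=0$ for every $\vS\in M$; testing against $\vS=\vR$ itself yields $\vR=\mathbf{0}$ pointwise a.e.\ in $\Omega$. Hence
\[
\vD(\vu)=\bigl(\alpha+\gamma\mu(|\vT^\bd|)\bigr)\vT^\bd \qquad \text{a.e.\ in }\Omega,
\]
and, solving for $\vT^\bd$ and multiplying by $\gamma\mu(|\vT^\bd|)$,
\[
\gamma\mu(|\vT^\bd|)\vT^\bd:\vD(\vu)
=\frac{\gamma\mu(|\vT^\bd|)}{\alpha+\gamma\mu(|\vT^\bd|)}\,|\vD(\vu)|^{2} \quad \text{a.e.\ in }\Omega.
\]

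Next I would test \eqref{pb:NS_EL_weakR1} with $\vv=\vu$; the convective term vanishes by \eqref{eqn:conv_zero}. Substituting the pointwise identity above, multiplying through by $\alpha$, and combining the terms in $|\vD(\vu)|^2$ using $1-\tfrac{\gamma\mu(|\vT^\bd|)}{\alpha+\gamma\mu(|\vT^\bd|)}=\tfrac{\alpha}{\alpha+\gamma\mu(|\vT^\bd|)}$, I arrive at the key identity
\[
\int_\Omega \frac{|\vD(\vu)|^{2}}{\alpha+\gamma\mu(|\vT^\bd|)}
=\int_\Omega \vf\cdot\vu.
\]
Since $\mu\leq \mu_{\max}$ by \eqref{eq:mumax}, the integrand on the left is bounded below by $|\vD(\vu)|^2/(\alpha+\gamma\mu_{\max})$, while the right-hand side is bounded above by $C_PC_K\|\vf\|_{L^2(\Omega)}\|\vD(\vu)\|_{L^2(\Omega)}$ via Cauchy--Schwarz followed by the Poincar\'e and Korn inequalities \eqref{def:poincare}--\eqref{def:korn}. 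Dividing by $\|\vD(\vu)\|_{L^2(\Omega)}$ yields \eqref{apriori_est:u2}. The bound \eqref{apriori_est:Td2} then follows immediately by combining \eqref{apriori_est:u2} with the estimate $\|\vT^\bd\|_{L^2(\Omega)}\leq \tfrac{1}{\alpha}\|\vD(\vu)\|_{L^2(\Omega)}$ already established in \eqref{eqn:apriori_step1}.

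There is no real obstacle to overcome; the only subtlety is the upgrade from the weak constitutive equation to its pointwise form, which hinges on the fact that for $\vu\in\Vdiv$ the discrepancy tensor $\vR$ belongs to $M$ rather than merely to $L^2(\Omega)^{d\times d}_{\rm sym}$. Once this is in hand, the weighted identity displayed above is what makes the sharper constant $\alpha+\gamma\mu_{\max}$ appear in place of the coarser quantity $\gamma C_1|\Omega|^{1/2}$.
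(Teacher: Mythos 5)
Your proof is correct and follows essentially the paper's own argument: both hinge on the a.e.\ constitutive identity $\bigl(\alpha+\gamma\mu(|\vT^\bd|)\bigr)\vT^\bd=\vD(\vu)$ and on testing the momentum equation with $\vv=\vu$, and your weighted identity $\int_\Omega |\vD(\vu)|^2/\bigl(\alpha+\gamma\mu(|\vT^\bd|)\bigr)=\int_\Omega\vf\cdot\vu$ is just an equivalent rewriting of the paper's steps $\int_\Omega\vT^\bd:\vD(\vu)=\int_\Omega\vf\cdot\vu$ and $\|\vD(\vu)\|_{L^2(\Omega)}^2\le(\alpha+\gamma\mu_{\max})\int_\Omega\vT^\bd:\vD(\vu)$, the latter resting on the same pointwise nonnegativity. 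Your explicit justification of the pointwise identity (testing \eqref{pb:NS_EL_weakR2} with $\vS=\vR\in M$, which is legitimate since $\mu(|\vT^\bd|)\vT^\bd$ is bounded, symmetric and traceless) is a nice touch, but the route and the resulting constants are the same as in the paper.
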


The advantage of the estimates \eqref{apriori_est:u2} and \eqref{apriori_est:Td2} is that if $\vf=\mathbf{0}$, then we can directly deduce that $\vu=\mathbf{0}$ and $\vT^\bd=\mathbf{0}$ (and consequently $p=0$).

\begin{proof}
	The ingredients of the proof are similar to those used in the proof of Lemma \ref{lem:apriori} and only the derivation of the bound for $\vD(\vu)$ is different. First notice that combining \eqref{pb:NS_EL_weakR1} and \eqref{pb:NS_EL_weakR2} we have
	\begin{equation} \label{pb:NS_EL_weakR12}
	\int_{\Omega}[(\vu\cdot\nabla)\vu]\cdot\vv + \int_{\Omega}\vT^\bd:\vD(\vv) = \int_{\Omega}\vf\cdot\vv \quad \forall\, \vv\in\Vdiv.
	\end{equation}
	Taking $\vv=\vu$ in \eqref{pb:NS_EL_weakR12} we then find that
	\begin{equation} \label{bound_Td_eu}
	\int_{\Omega}\vT^\bd:\vD(\vu)\leq C_PC_K\|\vf\|_{L^2(\Omega)}\|\vD(\vu)\|_{L^2(\Omega)}.
	\end{equation}
	Notice that $\vT^\bd:\vD(\vu)\geq 0$ a.e. in $\Omega$. Indeed, from \eqref{pb:NS_EL_weakR2} we have that
	\begin{equation} \label{eq:symgrad=T}
	\left(\alpha+\gamma\mu(|\vT^\bd|)\right) \vT^\bd = \vD(\vu) \quad \mbox{in } M'
	\end{equation}
	and thus
	\begin{equation*}
	\underbrace{\left(\alpha+\gamma\mu(|\vT^\bd|)\right)}_{> 0} \vT^\bd:\vD(\vu) = |\vD(\vu)|^2\geq 0 \quad \mbox{a.e. in } \Omega.
	\end{equation*}
	Therefore, taking $\vS=\vD(\vu)$ in \eqref{pb:NS_EL_weakR2} and using the upper bound $\mu_{\max}$ for $\mu$ and the bound \eqref{bound_Td_eu} we have
	\begin{align*}
	\|\vD(\vu)\|_{L^2(\Omega)}^2 & =  \alpha\int_{\Omega}\vT^\bd:\vD(\vu)+\gamma\int_{\Omega}\mu(|\vT^\bd|)\vT^\bd:\vD(\vu) \\
	 & \leq  (\alpha+\gamma\mu_{\max})\int_{\Omega}\vT^\bd:\vD(\vu) \\
	 & \leq  (\alpha+\gamma\mu_{\max})C_PC_K\|\vf\|_{L^2(\Omega)}\|\vD(\vu)\|_{L^2(\Omega)},
	\end{align*}
	which yields \eqref{apriori_est:u2}. Finally, the bound \eqref{apriori_est:Td2} for $\vT^\bd$ is obtained by substituting \eqref{apriori_est:u2} in \eqref{eqn:apriori_step1}.
\end{proof}

\begin{remark} \label{rem:1}
	{\rm Similar \emph{a priori} estimates can be derived in the case when $\vf\in V'$ (with $V' = H^{-1}(\Omega)^d$). More precisely, all occurrences of $C_PC_K\|\vf\|_{L^2(\Omega)}$ can be replaced by $\|\vf\|_{\Vdiv'}$, where
	\begin{equation} \label{def:f_in_Vprime}
	\|\vf\|_{\Vdiv'}:=\sup_{\vv\in\Vdiv}\frac{\langle\vf,\vv\rangle_{ V }}{\|\vv\|_V}=\sup_{\vv\in\Vdiv}\frac{\langle\vf,\vv\rangle_{ V }}{\|\vD(\vv)\|_{L^2(\Omega)}},
	\end{equation}
	and $\langle\cdot,\cdot\rangle_{ V }$ denotes the duality pairing  between $V'$ and $V$. The same observation holds for all that follows.}
\end{remark}

\begin{remark} \label{rem:otherapriori}
	{\rm By a direct argument we can also prove that
	\begin{equation} \label{apriori_est:u3bis}
	\|\vD(\vu)\|_{L^2(\Omega)}\leq \frac{1}{\alpha}(\alpha+\gamma \mu_{\rm max})^2C_PC_K\|\vf\|_{L^2(\Omega)}.
	\end{equation}
	This leads to the same \emph{a priori} bound \eqref{apriori_est:Td2} for $\vT^\bd$, 	
	$$
	\|\vT^\bd\|_{L^2(\Omega)}\leq \frac{1}{\alpha}(\alpha+\gamma \mu_{\rm max})C_PC_K\|\vf\|_{L^2(\Omega)}.
	$$
	Indeed, the choice $\vS=\vD(\vu)$ in \eqref{pb:NS_EL_weakR2} gives directly  (without invoking \eqref{eq:symgrad=T})
	$$
	\|\vD(\vu)\|_{L^2(\Omega)}^2= \alpha\int_{\Omega}\vT^\bd:\vD(\vu)+\gamma\int_{\Omega}\mu(|\vT^\bd|)\vT^\bd:\vD(\vu) \le (\alpha+\gamma\mu_{\max}) \|\vT^\bd\|_{L^2(\Omega)} \|\vD(\vu)\|_{L^2(\Omega)}.
	$$
	Hence
	$$\|\vD(\vu)\|_{L^2(\Omega)} \le (\alpha+\gamma\mu_{\max}) \|\vT^\bd\|_{L^2(\Omega)}$$
	and
	$$\|\vD(\vu)\|_{L^2(\Omega)}^2 \le (\alpha+\gamma\mu_{\max})^2 \|\vT^\bd\|_{L^2(\Omega)}^2.$$
	Then \eqref{apriori_est:u3bis} follows by substituting the bound
	$$\alpha\|\vT^\bd\|_{L^2(\Omega)}^2 \le \int_{\Omega}\vD(\vu):\vT^\bd = \int_{\Omega} \vf\cdot \vu \le C_PC_K\|\vf\|_{L^2(\Omega)}\|\vD(\vu)\|_{L^2(\Omega)}$$
	into the preceding inequality. This also yields \eqref{apriori_est:Td2}.}
\end{remark}	


\subsection{Construction of a solution} \label{subsec:existence}

In this subsection we prove the existence of a solution in a bounded Lipschitz domain without any restrictions on the data, other than those stated at the beginning of Section \ref{sec:S_ENL}. The first part of the construction is fairly standard: a suitable sequence of (finite-dimensional) Galerkin approximations to the infinite-dimensional problem is constructed, followed by Brouwer's fixed  point theorem to prove that each finite-dimensional problem in the sequence has a solution; uniform \emph{a priori} estimates, similar to those derived in Lemma \ref{lem:apriori}, are  established for the  Galerkin solutions, which are then used  for passing to the (weak) limit, via a weak compactness argument.  However, because of the combined effect of the nonlinearities,  identifying the limit as a solution to the infinite-dimensional problem requires a more refined argument.

For the sake of clarity, the argument  is split into several steps.

\textbf{Step 1} (Finite-dimensional approximation). Formulation \eqref{eq:reduced3} lends itself readily to a Galerkin discretisation. Since the only unknown is $\vu$ in $\Vdiv$, a separable Hilbert space, we introduce a  countably infinite basis $\left\{\vw_1,\vw_2,\ldots\right\}$ of  orthonormal functions of $\Vdiv$ with respect to the inner product
\begin{equation} \label{def:inner_prod_D}
(\vu,\vv):=\int_{\Omega}\vD(\vu):\vD(\vv),
\end{equation}
whose span is dense in $\Vdiv$. Next, we truncate this basis, i.e., for each $m\geq 1$ we define
 $$\Vdiv_m:=\mbox{span}\{\vw_1,\ldots,\vw_m\},$$
and for $\vu_m\in\Vdiv_m$ we denote by $\hat{\vu}_m\in\mathbb{R}^m$ its representation with respect to this basis. 	
Finally, we fix $m\geq 1$ and consider the following finite-dimensional problem: find $\vu_m\in\Vdiv_m$ such that, for all $1\leq j\leq m$,
\begin{equation} \label{pb:um}
\int_{\Omega}[(\vu_m\cdot\nabla)\vu_m]\cdot\vw_j + \frac{1}{\alpha}\int_{\Omega}\vD(\vu_m):\vD(\vw_j)=\int_{\Omega}\vf\cdot\vw_j+\frac{\gamma}{\alpha}\int_{\Omega}\mu(|\vT_m^\bd|)\vT_m^\bd:\vD(\vw_j)
\end{equation}
with $\vT_m^\bd:=\G(\vu_m)$. In other words, $\vT_m^\bd\in M$ solves
\begin{equation} \label{pb:Tmd}
\alpha\int_{\Omega}\vT_m^\bd:\vS+\gamma\int_{\Omega}\mu(|\vT_m^\bd|)\vT_m^\bd:\vS=\int_{\Omega}\vD(\vu_m):\vS \quad \forall\, \vS\in M.
\end{equation}
Problem \eqref{pb:um}, which can be seen as the \emph{projection} of \eqref{eq:reduced3} onto $\Vdiv_m$, is equivalent to the following: find $\hat{\vu}_m\in\mathbb{R}^m$ such that
\begin{equation*}
\mathbf{F}(\hat{\vu}_m)= \mathbf{0},
\end{equation*}
where $\mathbf{F}=(F_1,\ldots,F_m)^{t}:\mathbb{R}^m\rightarrow\mathbb{R}^m$ is the continuous function defined, for $j=1,\ldots,m$, by
\begin{equation*}
F_j(\hat{\vu}_m):=\int_{\Omega}[(\vu_m\cdot\nabla)\vu_m]\cdot\vw_j + \frac{1}{\alpha}\int_{\Omega}\vD(\vu_m):\vD(\vw_j)-\int_{\Omega}\vf\cdot\vw_j-\frac{\gamma}{\alpha}\int_{\Omega}\mu(|\vT_m^\bd|)\vT_m^\bd:\vD(\vw_j).
\end{equation*}
	
\textbf{Step 2} (Existence of a discrete solution).	Problem \eqref{pb:um} is a system of $m$ nonlinear equations in $m$ unknowns. The existence of a solution to this problem can be established by the following variant of Brouwer's fixed point theorem (see e.g.~\cite{Evans,ref:GiR}).

\begin{lem} \label{lem:zero_in_ball}
	Let $\mathbf{F}:\mathbb{R}^m\rightarrow\mathbb{R}^m$ be a continuous function that satisfies
	\begin{equation*}
	\mathbf{F}(\vx)\cdot\vx\geq 0 \quad \mbox{if} \quad |\vx|=r
	\end{equation*}
	for some $r>0$. Then, there exists  a point $\vx\in B_m(\mathbf{0},r):=\{\vx\in\mathbb{R}^m:\,\, |\vx|\le r\}$ such that
	\begin{equation*}
	\mathbf{F}(\vx)=\mathbf{0}.
	\end{equation*}
\end{lem}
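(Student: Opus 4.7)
The plan is to proceed by contradiction and reduce the statement to the standard Brouwer fixed point theorem on the closed ball $B_m(\mathbf{0},r)$. So suppose, for contradiction, that $\mathbf{F}(\vx)\neq\mathbf{0}$ for every $\vx\in B_m(\mathbf{0},r)$. Since $B_m(\mathbf{0},r)$ is compact and $\mathbf{F}$ is continuous, the map
\[
\vx\mapsto H(\vx):=-\,\frac{r\,\mathbf{F}(\vx)}{|\mathbf{F}(\vx)|}
\]
is then well-defined and continuous on the whole closed ball, and clearly takes values in $B_m(\mathbf{0},r)$ (in fact on its boundary sphere).

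Next I would apply the standard Brouwer fixed point theorem to $H$: since $H$ is a continuous self-map of the closed ball, it admits a fixed point $\vx^{\ast}\in B_m(\mathbf{0},r)$. Because $|H(\vx)|=r$ for all $\vx$, any such fixed point automatically satisfies $|\vx^{\ast}|=r$. The relation $\vx^{\ast}=H(\vx^{\ast})$ rewrites as
\[
\mathbf{F}(\vx^{\ast})=-\,\frac{|\mathbf{F}(\vx^{\ast})|}{r}\,\vx^{\ast},
\]
and taking the inner product with $\vx^{\ast}$ gives
\[
\mathbf{F}(\vx^{\ast})\cdot\vx^{\ast}=-\,\frac{|\mathbf{F}(\vx^{\ast})|}{r}\,|\vx^{\ast}|^{2}=-r\,|\mathbf{F}(\vx^{\ast})|<0,
\]
where the strict inequality uses our contradiction hypothesis $\mathbf{F}(\vx^{\ast})\neq\mathbf{0}$. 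This contradicts the assumed sign condition $\mathbf{F}(\vx)\cdot\vx\geq 0$ on the sphere $|\vx|=r$, and therefore $\mathbf{F}$ must vanish somewhere in $B_m(\mathbf{0},r)$.

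There is no real obstacle here: the only subtlety is making sure the retraction $H$ is genuinely defined and continuous, which is precisely where the contradiction hypothesis (nonvanishing of $\mathbf{F}$ on the closed ball) is used. An alternative route would be to invoke the no-retraction theorem directly, noting that if $\mathbf{F}$ had no zero in $B_m(\mathbf{0},r)$ then $H$ would be a continuous retraction of the ball onto its boundary sphere (indeed the sign condition ensures $H(\vx)\neq\vx$ for $|\vx|=r$, so composing with the radial projection yields a retraction), but the direct fixed-point argument above is shorter and self-contained given Brouwer's theorem, which is what the paper cites.
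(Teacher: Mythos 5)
Your proof is correct: the paper does not prove this lemma itself but refers to the standard references (Evans, Girault--Raviart), and your argument — assuming $\mathbf{F}$ has no zero, applying Brouwer's fixed point theorem to $H(\vx)=-r\,\mathbf{F}(\vx)/|\mathbf{F}(\vx)|$, and deriving $\mathbf{F}(\vx^{\ast})\cdot\vx^{\ast}=-r|\mathbf{F}(\vx^{\ast})|<0$ at a fixed point on the sphere — is exactly the classical proof given there. Nothing is missing; the observation that any fixed point must lie on the boundary sphere is the key point and you handle it correctly.
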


\begin{prop} \label{pro:exist_um}
	Problem \eqref{pb:um} has at least one solution  $\vu_m\in\Vdiv_m$ that satisfies the uniform bound
	\begin{equation} \label{apriori_um}
	\|\vD(\vu_m)\|_{L^2(\Omega)} \leq \alpha C_PC_K\|\vf\|_{L^2(\Omega)}+\gamma C_1 |\Omega|^{\frac{1}{2}}.
	\end{equation}
	Moreover,  $\vT_m^\bd=\G(\vu_m)$ satisfies the uniform bound
	\begin{equation} \label{apriori_Tmd}
	\|\vT_m^\bd\|_{L^2(\Omega)} \leq \frac{1}{\alpha}	\|\vD(\vu_m)\|_{L^2(\Omega)}\leq C_PC_K\|\vf\|_{L^2(\Omega)}+\frac{\gamma}{\alpha} C_1 |\Omega|^{\frac{1}{2}}.
	\end{equation}
\end{prop}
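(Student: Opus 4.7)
The plan is to verify the hypotheses of Lemma \ref{lem:zero_in_ball} for the map $\mathbf{F}:\mathbb{R}^m\to\mathbb{R}^m$ defined in Step 1, and then to read off the uniform bound from the radius $r$ produced by the coercivity argument. The bound on $\vT_m^\bd$ will follow from testing the constitutive equation \eqref{pb:Tmd} by $\vT_m^\bd$ itself.

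First I would establish continuity of $\mathbf{F}$. The only nontrivial ingredient is the continuous dependence $\vu_m\mapsto\G(\vu_m)$. This is a consequence of the strict monotonicity of $\A$: given two velocities $\vu_m,\tilde\vu_m\in\Vdiv_m$ with corresponding stresses $\vT_m^\bd,\tilde\vT_m^\bd$, subtracting the two instances of \eqref{pb:Tmd}, testing with $\vS=\vT_m^\bd-\tilde\vT_m^\bd\in M$, and using \eqref{eqn:mu_mon} together with Cauchy--Schwarz leads to
\[
\alpha\|\vT_m^\bd-\tilde\vT_m^\bd\|_{L^2(\Omega)}\le \|\vD(\vu_m-\tilde\vu_m)\|_{L^2(\Omega)},
\]
so $\G$ is in fact Lipschitz from $\Vdiv$ to $M$. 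Continuity of the other terms of $F_j$ in $\hat\vu_m$ is standard (polynomial in the components and continuous composition with $\mu$).

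Next I would test the projected equation \eqref{pb:um} against $\vu_m$ itself, which amounts to computing $\mathbf{F}(\hat\vu_m)\cdot\hat\vu_m$. The convective contribution vanishes by \eqref{eqn:conv_zero} since $\vu_m\in\Vdiv_m\subset\Vdiv$, so
\[
\mathbf{F}(\hat\vu_m)\cdot\hat\vu_m
= \tfrac{1}{\alpha}\|\vD(\vu_m)\|_{L^2(\Omega)}^{2}
-\int_\Omega\vf\cdot\vu_m
-\tfrac{\gamma}{\alpha}\int_\Omega\mu(|\vT_m^\bd|)\vT_m^\bd:\vD(\vu_m).
\]
The force term is bounded by $C_PC_K\|\vf\|_{L^2(\Omega)}\|\vD(\vu_m)\|_{L^2(\Omega)}$ thanks to Poincaré and Korn. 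For the stress term, the pointwise bound $|\mu(a)a|\le C_1$ from \eqref{eqn:mu_pos_C1} and Cauchy--Schwarz yield the estimate $C_1|\Omega|^{1/2}\|\vD(\vu_m)\|_{L^2(\Omega)}$. Since $\{\vw_j\}$ is orthonormal for the inner product \eqref{def:inner_prod_D}, $\|\vD(\vu_m)\|_{L^2(\Omega)}=|\hat\vu_m|$, and setting $|\hat\vu_m|=r$ gives
\[
\mathbf{F}(\hat\vu_m)\cdot\hat\vu_m
\ge r\left(\tfrac{r}{\alpha} - C_PC_K\|\vf\|_{L^2(\Omega)} - \tfrac{\gamma}{\alpha}C_1|\Omega|^{1/2}\right).
\]
Choosing $r:=\alpha C_PC_K\|\vf\|_{L^2(\Omega)}+\gamma C_1|\Omega|^{1/2}$ makes the right-hand side nonnegative. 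Lemma \ref{lem:zero_in_ball} then yields a zero of $\mathbf{F}$ inside $\overline{B_m(\mathbf{0},r)}$, which is a discrete solution satisfying \eqref{apriori_um}.

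Finally, to deduce \eqref{apriori_Tmd} I would test \eqref{pb:Tmd} with $\vS=\vT_m^\bd\in M$, use the positivity of $\mu$ in \eqref{eqn:mu_pos_C1} to drop the nonnegative nonlinear term, apply Cauchy--Schwarz to the right-hand side, and divide by $\|\vT_m^\bd\|_{L^2(\Omega)}$, obtaining $\alpha\|\vT_m^\bd\|_{L^2(\Omega)}\le\|\vD(\vu_m)\|_{L^2(\Omega)}$; combining with \eqref{apriori_um} gives the required estimate. The main subtlety is the first paragraph: making sure the stress is well-defined as a continuous function of the discrete velocity so that Brouwer's theorem applies; everything else mirrors the a priori computations in Lemma \ref{lem:apriori}.
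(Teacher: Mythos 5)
Your proposal is correct and follows essentially the same route as the paper: compute $\mathbf{F}(\hat{\vu}_m)\cdot\hat{\vu}_m$ using the antisymmetry \eqref{eqn:conv_zero}, bound the force and stress terms via Poincar\'e--Korn and \eqref{eqn:mu_pos_C1}, choose $r=\alpha C_PC_K\|\vf\|_{L^2(\Omega)}+\gamma C_1|\Omega|^{\frac{1}{2}}$, apply Lemma \ref{lem:zero_in_ball}, and then obtain \eqref{apriori_Tmd} by testing \eqref{pb:Tmd} with $\vT_m^\bd$. Your explicit Lipschitz estimate for $\G$ (via monotonicity) is a welcome justification of the continuity of $\mathbf{F}$, which the paper leaves implicit, but it does not change the argument.
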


\begin{proof}	
	We infer from Lemma \ref{lem:zero_in_ball} that $\mathbf{F}$ has a zero in the ball $B_m(\mathbf{0},r)$ with
	\begin{equation*}
	r:=\alpha C_PC_K\|\vf\|_{L^2(\Omega)}+\gamma C_1 |\Omega|^{\frac{1}{2}}.
	\end{equation*}
	Indeed, using the antisymmetry property \eqref{eqn:conv_zero}, which holds because  $\vu_m \in \Vdiv_m \subset \Vdiv$, we get
	\begin{align*}
	\mathbf{F}(\hat{\vu}_m)\cdot\hat{\vu}_m & =  \frac{1}{\alpha}\int_{\Omega}|\vD(\vu_m)|^2-\int_{\Omega}\vf\cdot\vu_m-\frac{\gamma}{\alpha}\int_{\Omega}\mu(|\vT_m^\bd|)\vT_m^\bd:\vD(\vu_m) \\
	& \geq  \left(\frac{1}{\alpha}\|\vD(\vu_m)\|_{L^2(\Omega)}-C_PC_K\|\vf\|_{L^2(\Omega)}-\frac{\gamma}{\alpha} C_1 |\Omega|^{\frac{1}{2}}\right)\|\vD(\vu_m)\|_{L^2(\Omega)},
	\end{align*}
	where we have used Poincar\'e's and Korn's inequalities \eqref{def:poincare} and \eqref{def:korn}, respectively, to bound the second term and the relation \eqref{eqn:mu_pos_C1} for the third one. As $\|\vD(\vu_m)\|_{L^2(\Omega)}=|\hat{\vu}_m|$, we deduce from the last inequality that if $|\hat{\vu}_m|=r$ with $r$ as defined above, then
	\begin{equation*}
	\mathbf{F}(\hat{\vu}_m)\cdot\hat{\vu}_m\geq \left(\frac{1}{\alpha}|\hat{\vu}_m|-C_PC_K\|\vf\|_{L^2(\Omega)}-\frac{\gamma}{\alpha} C_1 |\Omega|^{\frac{1}{2}}\right)|\hat{\vu}_m|=0.
	\end{equation*}
	Thanks to Lemma \ref{lem:zero_in_ball}, there exists  a point  $\hat{\vu}_m\in B_m(\mathbf{0},r)$ such that $\mathbf{F}(\hat{\vu}_m)= {\bf 0}$, i.e., problem \eqref{pb:um} has a solution $\vu_m\in\Vdiv_m$ that satisfies the uniform bound \eqref{apriori_um}. Finally, it is easily shown that $\vT_m^\bd:=\G(\vu_m)$ satisfies the bound \eqref{apriori_Tmd}.
\end{proof}

\textbf{Step 3} (Passage to the limit $m\rightarrow\infty$ and identification of the limit). We consider the sequences $(\vu_m)_{m\geq 1}$ and $(\vT_m^\bd)_{m\geq 1}$ with $\vu_m\in\Vdiv_m$ and  $\vT_m^\bd=\G(\vu_m)\in M$. Thanks to the uniform  estimates \eqref{apriori_um} and \eqref{apriori_Tmd} there exist two subsequences (not relabelled) such that
\begin{alignat*}{2}
 \lim_{m \to \infty} \vu_m & =\bar{\vu} & & \quad\mbox{weakly in } H_0^1(\Omega)^d \,\, (\mbox{and thus also in }\Vdiv), \\
 \lim_{m \to \infty} \vu_m & =\bar{\vu} & & \quad\mbox{strongly in } L^q(\Omega)^d\,\,  \mbox{with }1\leq q < \infty\; \mbox{ if }\, d=2, \mbox{ and }1 \leq q < 6\; \mbox{ if }\, d=3,  \\
 \lim_{m \to \infty} \vT_m^\bd & =\bar{\vT}^\bd & & \quad \mbox{weakly in } L^2(\Omega)^{d\times d} \,\, (\mbox{and thus also in }M),
\end{alignat*}
for some $\bar{\vu}\in\Vdiv$ and $\bar{\vT}^\bd\in M$. Our objective is to show that the  pair  $(\bar{\vT}^\bd,\bar{\vu}) \in M\times \Vdiv$  is a solution to the problem under consideration by passing to the limit in \eqref{pb:um}, \eqref{pb:Tmd}.

Passing  to the limit in \eqref{pb:um}, \eqref{pb:Tmd} is however not straightforward because of the lack of strong convergence of $\vT_m^\bd$ in $M$.  Identifying the pair  $(\bar{\vT}^\bd,\bar{\vu}) \in M\times \Vdiv$ as a solution will be achieved by means of the following two lemmas, the first of which (Lemma \ref{lem:conv_product}) relies on the equations and the strong convergence of the sequence $(\vu_m)_{m\geq 1}$ in $L^q(\Omega)^d$ shown above, and the second lemma (Lemma \ref{lem:T_to_G(u)}) follows from the monotonicity property \eqref{eqn:mu_mon}.

The proof, included below, that the pair $(\bar{\vT}^\bd,\bar{\vu})$ satisfies \eqref{pb:NS_EL_weakR2}  is inspired by the arguments in \cite{BGMRS12}, where a more general constitutive relation than \eqref{eqn:constitutive_rel} was considered. Specifically, the conclusion of Lemma \ref{lem:T_to_G(u)} follows from \cite[Lemma 2.4.1]{BGMRS12}, the hypothesis (2.12) of \cite[Lemma 2.4.1]{BGMRS12} being fulfilled thanks to Lemma \ref{lem:conv_product}; however we provide a proof here that is directly tailored to our problem.

\begin{lem} \label{lem:conv_product}
	The following limit holds:
	\begin{equation} \label{eqn:lim_sup}
	\lim_{m\rightarrow\infty}\int_{\Omega}\vT_m^\bd:\vD(\vu_m)= \int_{\Omega}\bar{\vT}^\bd:\vD(\bar{\vu}).
	\end{equation}
\end{lem}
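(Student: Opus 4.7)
The plan is to exploit the two discrete equations \eqref{pb:um} and \eqref{pb:Tmd} to replace the problematic product $\int_\Omega \vT_m^\bd:\vD(\vu_m)$, in which both factors converge only weakly in $L^2$, by a quantity that is manifestly continuous under the available convergence modes. The forcing-type term $\int_\Omega \vf \cdot \vu_m$ will play this role, thanks to the strong convergence $\vu_m \to \bar{\vu}$ in $L^q(\Omega)^d$.

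First I would test the nonlinear constitutive relation \eqref{pb:Tmd} against $\vS=\vD(\vu_m)$, which is an admissible choice in $M$ because $\tr(\vD(\vu_m))=\di(\vu_m)=0$; this gives
\[
\|\vD(\vu_m)\|_{L^2(\Omega)}^2
=\alpha\int_{\Omega}\vT_m^\bd:\vD(\vu_m)+\gamma\int_{\Omega}\mu(|\vT_m^\bd|)\vT_m^\bd:\vD(\vu_m).
\]
Next I would take $\vv=\vu_m\in\Vdiv_m$ in the Galerkin equation \eqref{pb:um}; the convective term vanishes by \eqref{eqn:conv_zero}, leaving
\[
\tfrac{1}{\alpha}\|\vD(\vu_m)\|_{L^2(\Omega)}^2=\int_{\Omega}\vf\cdot\vu_m+\tfrac{\gamma}{\alpha}\int_{\Omega}\mu(|\vT_m^\bd|)\vT_m^\bd:\vD(\vu_m).
\]
Eliminating the common $\mu$-weighted term between these two identities yields the clean formula
\[
\int_{\Omega}\vT_m^\bd:\vD(\vu_m)=\int_{\Omega}\vf\cdot\vu_m,
\]
and the strong $L^2$ convergence of $\vu_m$ immediately gives
$\int_\Omega \vT_m^\bd:\vD(\vu_m)\to\int_\Omega \vf\cdot\bar{\vu}$.

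It remains to identify the right-hand side with $\int_\Omega \bar{\vT}^\bd:\vD(\bar{\vu})$. To achieve this, I would pass to the limit in \eqref{pb:um}, \eqref{pb:Tmd} for a \emph{fixed} basis function $\vw_j$: combining the two equations tested against $\vw_j$ and $\vS=\vD(\vw_j)$ respectively (again, $\vD(\vw_j)\in M$ since $\vw_j\in\Vdiv$) eliminates the $\mu$-nonlinearity and yields the discrete momentum identity
\[
\int_{\Omega}[(\vu_m\cdot\nabla)\vu_m]\cdot\vw_j+\int_{\Omega}\vT_m^\bd:\vD(\vw_j)=\int_{\Omega}\vf\cdot\vw_j.
\]
The convective term converges to $\int_\Omega[(\bar{\vu}\cdot\nabla)\bar{\vu}]\cdot\vw_j$ after an integration by parts, using the strong convergence of $\vu_m$ in $L^q(\Omega)^d$ (with $q=4$ sufficing in both $d=2,3$) and the weak convergence of $\nabla\vu_m$ in $L^2$; the stress term converges by weak convergence of $\vT_m^\bd$. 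Density of $\mathrm{span}\{\vw_j\}_{j\ge 1}$ in $\Vdiv$ then extends the limit identity to all $\vv\in\Vdiv$, and choosing $\vv=\bar{\vu}$ (with the antisymmetry \eqref{eqn:conv_zero} killing the convective contribution) gives $\int_\Omega\bar{\vT}^\bd:\vD(\bar{\vu})=\int_\Omega \vf\cdot\bar{\vu}$, completing the proof.

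The only delicate point is the passage to the limit in the convective term, which requires the compact embedding $H^1_0\hookrightarrow L^q$ for some $q>2$; the $L^2$-weak convergence of $\vD(\vu_m)$ by itself would not suffice. The rest of the argument is an algebraic manipulation that sidesteps the absence of strong convergence of $\vT_m^\bd$, deferring the genuine nonlinearity $\bar{\vT}^\bd=\G(\bar{\vu})$ to the subsequent Lemma~\ref{lem:T_to_G(u)}, where Minty's monotonicity trick applied with \eqref{eqn:mu_mon} will use precisely the identity \eqref{eqn:lim_sup} established here.
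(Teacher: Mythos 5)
Your proposal is correct and follows essentially the same route as the paper: you obtain the identity $\int_\Omega\vT_m^\bd:\vD(\vu_m)=\int_\Omega\vf\cdot\vu_m$, pass to the limit in the discrete momentum identity tested against each fixed $\vw_j$, extend by density to all of $\Vdiv$, and conclude by taking $\vv=\bar{\vu}$. The only (inessential) difference is that you derive the energy identity by testing \eqref{pb:Tmd} with $\vD(\vu_m)$ and \eqref{pb:um} with $\vu_m$ and eliminating the $\mu$-term, whereas the paper first eliminates the nonlinearity for each basis function $\vw_j$ and then sums against the coefficients $(\hat{\vu}_m)_j$ — an algebraically equivalent manipulation.
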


\begin{proof}
	By testing equation \eqref{pb:Tmd} with $\vS=\vD(\vw_j)$ and substituting into \eqref{pb:um} we deduce that
	\begin{equation} \label{eqn:um_wj}
	\int_{\Omega}[(\vu_m\cdot\nabla)\vu_m]\cdot\vw_j + \int_{\Omega}\vT_m^\bd:\vD(\vw_j) =\int_{\Omega}\vf\cdot\vw_j \quad \forall\, 1\leq j\leq m.
	\end{equation}
	Multiplying \eqref{eqn:um_wj} by $(\hat{\vu}_m)_j$, summing over $j$, and applying \eqref{eqn:conv_zero}, we derive
	\begin{equation} \label{eqn:Tmd_um}
	\int_{\Omega}\vT_m^\bd:\vD(\vu_m) =\int_{\Omega}\vf\cdot\vu_m.
	\end{equation}
	Thus we obtain on the one hand
	\begin{equation} \label{eq:lim_product1}
	\lim_{m\rightarrow\infty}\int_{\Omega}\vT_m^\bd:\vD(\vu_m) = \lim_{m\rightarrow\infty}\int_{\Omega}\vf\cdot\vu_m = \int_{\Omega}\vf\cdot\bar{\vu}.
	\end{equation}	
	On the other hand, letting $m$ tend to infinity in \eqref{eqn:um_wj} for fixed $j$ and considering the strong convergence of  $\vu_m$, we infer that
	\begin{equation*}
	\int_{\Omega}[(\bar{\vu}\cdot\nabla)\bar{\vu}]\cdot\vw_j + \int_{\Omega}\bar{\vT}^\bd:\vD(\vw_j) =\int_{\Omega}\vf\cdot\vw_j \quad \forall\, j\geq 1,
	\end{equation*}
	and the density of $\bigcup_{m \geq 1}\Vdiv_m$ in $\Vdiv$ therefore implies that
	\begin{equation} \label{eqn:Td_u}
	\int_{\Omega}[(\bar{\vu}\cdot\nabla)\bar{\vu}]\cdot\vv + \int_{\Omega}\bar{\vT}^\bd:\vD(\vv) =\int_{\Omega}\vf\cdot\vv \quad \forall\, \vv\in\Vdiv.
	\end{equation}
	In view of \eqref{eqn:conv_zero}, the choice $\vv = \bar{\vu}$ in \eqref{eqn:Td_u} yields
	\begin{equation} \label{eq:product2}
	\int_{\Omega}\bar{\vT}^\bd:\vD(\bar{\vu}) =\int_{\Omega}\vf\cdot\bar{\vu},
	\end{equation}
	and \eqref{eqn:lim_sup}	then follows from \eqref{eq:lim_product1} and \eqref{eq:product2}.
\end{proof}

\begin{lem} \label{lem:T_to_G(u)}
	We have that
	\begin{equation} \label{eqn:barT_G(baru)}
	\bar\vT^\bd= \G(\bar{\vu}).
	\end{equation}
\end{lem}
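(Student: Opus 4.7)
The plan is to use Minty's monotonicity trick. Define pointwise the operator $A(\vR) := \alpha \vR + \gamma \mu(|\vR|)\vR$, so that $\G(\vu)$ is characterized as the unique $\vT^\bd \in M$ such that $\int_\Omega A(\vT^\bd):\vR = \int_\Omega \vD(\vu):\vR$ for every $\vR \in M$. By the pointwise monotonicity property \eqref{eqn:mu_mon}, for every $\vS \in M$,
\begin{equation*}
0 \le \int_\Omega \bigl(A(\vT_m^\bd) - A(\vS)\bigr):(\vT_m^\bd - \vS).
\end{equation*}
Expanding this expression and using \eqref{pb:Tmd} twice, namely once with test function $\vR = \vT_m^\bd \in M$ and once with $\vR = \vS \in M$, gives
\begin{equation*}
0 \le \int_\Omega \vD(\vu_m):\vT_m^\bd - \int_\Omega \vD(\vu_m):\vS - \int_\Omega A(\vS):(\vT_m^\bd - \vS).
\end{equation*}

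The second step is to pass to the limit $m \to \infty$ in this inequality. The first term converges to $\int_\Omega \vD(\bar{\vu}):\bar{\vT}^\bd$ thanks to Lemma \ref{lem:conv_product}; this is the crucial input that makes the argument work and uses, in an essential way, the equations together with the strong convergence of $\vu_m$ in $L^q(\Omega)^d$. The second term converges to $\int_\Omega \vD(\bar{\vu}):\vS$ because $\vu_m \rightharpoonup \bar{\vu}$ weakly in $H_0^1(\Omega)^d$, so $\vD(\vu_m) \rightharpoonup \vD(\bar{\vu})$ weakly in $L^2$, and $\vS \in L^2$ is fixed. The third term converges to $\int_\Omega A(\vS):(\bar{\vT}^\bd - \vS)$ because $A(\vS) \in L^2(\Omega)^{d\times d}$ (its integrand is bounded a.e. by $(\alpha + \gamma C_1/|\vS|)|\vS|$, more simply by $\alpha|\vS| + \gamma C_1$ using \eqref{eqn:mu_pos_C1}) and $\vT_m^\bd \rightharpoonup \bar{\vT}^\bd$ weakly in $L^2$. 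The limit inequality reads
\begin{equation*}
0 \le \int_\Omega \vD(\bar{\vu}):(\bar{\vT}^\bd - \vS) - \int_\Omega A(\vS):(\bar{\vT}^\bd - \vS) \qquad \forall\, \vS \in M.
\end{equation*}

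The third and final step is Minty's trick. Fix an arbitrary $\vR \in M$ and $\lambda \in \mathbb{R}$, and apply the above with $\vS = \bar{\vT}^\bd - \lambda \vR \in M$, so that $\bar{\vT}^\bd - \vS = \lambda \vR$. Dividing by $\lambda$ (with sign tracking) yields
\begin{equation*}
\int_\Omega \vD(\bar{\vu}):\vR - \int_\Omega A(\bar{\vT}^\bd - \lambda \vR):\vR \;\ge\; 0 \text{ if } \lambda > 0, \quad \le 0 \text{ if } \lambda < 0.
\end{equation*}
Letting $\lambda \to 0^\pm$ and using the pointwise continuity of $\vS \mapsto A(\vS)$ together with the uniform bound $|\mu(|\vS|)\vS| \le C_1$ from \eqref{eqn:mu_pos_C1} to invoke dominated convergence, we obtain both inequalities simultaneously and therefore
\begin{equation*}
\int_\Omega A(\bar{\vT}^\bd):\vR = \int_\Omega \vD(\bar{\vu}):\vR \qquad \forall\, \vR \in M.
\end{equation*}
By uniqueness of the solution to \eqref{eqn:def_map_G}, this identifies $\bar{\vT}^\bd = \G(\bar{\vu})$.

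The main obstacle is the lack of strong convergence of $\vT_m^\bd$, which prevents direct passage to the limit in the nonlinear term $\mu(|\vT_m^\bd|)\vT_m^\bd$. Lemma \ref{lem:conv_product} is exactly what resolves this: it gives convergence of the crucial quadratic pairing $\int_\Omega \vT_m^\bd : \vD(\vu_m)$ to $\int_\Omega \bar{\vT}^\bd:\vD(\bar{\vu})$ even though only weak convergence is available for each factor, thereby enabling the Minty-type argument above.
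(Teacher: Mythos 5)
Your proof is correct, and it takes a genuinely different route from the paper. You run the classical Minty hemicontinuity trick: monotonicity of $A(\vR)=\alpha\vR+\gamma\mu(|\vR|)\vR$ tested against an arbitrary $\vS$, passage to the limit using Lemma \ref{lem:conv_product} for the quadratic pairing and plain weak convergence for the linear terms, then the perturbation $\vS=\bar\vT^\bd-\lambda\vR$ with $\lambda\to 0^\pm$, justified by continuity of $a\mapsto\mu(a)a$ and dominated convergence via the bound $|A(\vS)|\le\alpha|\vS|+\gamma C_1$ from \eqref{eqn:mu_pos_C1}. The paper instead exploits the strong monotonicity coming from the $\alpha$-term: it sets $\tilde\vT^\bd:=\G(\bar\vu)$, subtracts the two defining equations, tests with $\vT_m^\bd-\tilde\vT^\bd$, and uses \eqref{eqn:mu_mon} to get $\alpha\|\vT_m^\bd-\tilde\vT^\bd\|_{L^2(\Omega)}^2$ bounded by terms whose limit is zero (again via Lemma \ref{lem:conv_product}), so that $\vT_m^\bd\to\G(\bar\vu)$ \emph{strongly} in $M$ and the identification follows from uniqueness of the weak limit. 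What each buys: your argument is more general in that it only needs monotonicity plus hemicontinuity of the constitutive map (it would survive without the $\alpha$-elliptic contribution), whereas the paper's shorter argument leans on the Hilbertian structure and delivers, as a byproduct, the strong $M$-convergence of $\vT_m^\bd$, which the paper records in the lemma and reuses as a template in the finite element convergence analysis; your route yields only the identification of the weak limit, which is all the statement itself requires.
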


\begin{proof}
	Let $\tilde{\vT}^\bd:=\G(\bar{\vu})$; since $\vT_m^\bd:=\G(\vu_m)$, we have by definition
	\begin{equation*}
	\alpha\int_{\Omega}(\vT_m^\bd-\tilde{\vT}^\bd):\vS+\gamma\int_{\Omega}(\mu(|\vT_m^\bd|)\vT_m^\bd-\mu(|{\tilde \vT}^{\bd}|)\tilde{\vT}^\bd):\vS=\int_{\Omega}(\vD(\vu_m-\bar{\vu})):\vS
	\end{equation*}
	for all $\vS\in M$. Taking then $\vS=\vT_m^\bd-\tilde{\vT}^\bd$ and using the monotonicity property \eqref{eqn:mu_mon} we get
	\begin{align*}
	\alpha\int_{\Omega}|\vT_m^\bd-\tilde{\vT}^\bd|^2 & \leq  \int_{\Omega}(\vD(\vu_m-\bar{\vu})):(\vT_m^\bd-\tilde{\vT}^\bd) \\
	& =  \int_{\Omega}\left[\vD(\vu_m):\vT_m^\bd-\vD(\vu_m):\tilde{\vT}^\bd-\vD(\bar{\vu}):(\vT_m^\bd-\tilde{\vT}^\bd)\right].
	\end{align*}
	Finally, we take the limit $m\rightarrow\infty$ of both sides and apply  \eqref{eqn:lim_sup} to obtain
	\begin{equation*}
	\alpha\lim_{m\rightarrow\infty}\int_{\Omega}|\vT_m^\bd-\tilde{\vT}^\bd|^2 \leq \int_{\Omega}\left[\vD(\bar{\vu}):\bar{\vT}^\bd-\vD(\bar{\vu}):\tilde{\vT}^\bd-\vD(\bar{\vu}):(\bar{\vT}^\bd-\tilde{\vT}^\bd)\right]=0,
	\end{equation*}
	which implies \eqref{eqn:barT_G(baru)} as well as the \emph{strong} convergence in $M$ of $\vT_m^\bd$ to $\bar{\vT}^\bd$.
\end{proof}

\begin{thm}(Existence of a solution) \label{thm:existence1}
	The pair  $(\bar\vT^\bd,\bar\vu)\in M\times\Vdiv$ solves
	\eqref{pb:NS_EL_weakR1}, \eqref{pb:NS_EL_weakR2}.
\end{thm}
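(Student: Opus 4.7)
The plan is to assemble the theorem directly from the two preceding lemmas together with the identity \eqref{eqn:Td_u} produced during the proof of Lemma \ref{lem:conv_product}; essentially no new analysis is required at this stage, just a short reconciliation between the ``combined'' momentum equation and the two-equation formulation.

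First I would dispatch \eqref{pb:NS_EL_weakR2} for free. Lemma \ref{lem:T_to_G(u)} asserts $\bar\vT^\bd=\G(\bar\vu)$, and by the definition \eqref{eqn:def_map_G} of the mapping $\G$ this means
\[
\alpha\int_{\Omega}\bar\vT^\bd:\vS+\gamma\int_{\Omega}\mu(|\bar\vT^\bd|)\bar\vT^\bd:\vS=\int_{\Omega}\vD(\bar\vu):\vS\qquad\forall\,\vS\in M,
\]
which is \eqref{pb:NS_EL_weakR2}.

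To recover \eqref{pb:NS_EL_weakR1}, I would revisit the identity \eqref{eqn:Td_u} obtained in the proof of Lemma \ref{lem:conv_product},
\[
\int_{\Omega}[(\bar\vu\cdot\nabla)\bar\vu]\cdot\vv+\int_{\Omega}\bar\vT^\bd:\vD(\vv)=\int_{\Omega}\vf\cdot\vv\qquad\forall\,\vv\in\Vdiv,
\]
and eliminate the middle term via \eqref{pb:NS_EL_weakR2}. For any $\vv\in\Vdiv$ the tensor $\vD(\vv)$ is symmetric with $\tr(\vD(\vv))=\di(\vv)=0$, so $\vD(\vv)\in M$ and is admissible as a test in \eqref{pb:NS_EL_weakR2}. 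Taking $\vS=\vD(\vv)$ there and solving for $\int_{\Omega}\bar\vT^\bd:\vD(\vv)$ gives
\[
\int_{\Omega}\bar\vT^\bd:\vD(\vv)=\frac{1}{\alpha}\int_{\Omega}\vD(\bar\vu):\vD(\vv)-\frac{\gamma}{\alpha}\int_{\Omega}\mu(|\bar\vT^\bd|)\bar\vT^\bd:\vD(\vv),
\]
and substituting this back into the identity above yields, after a trivial rearrangement, precisely \eqref{pb:NS_EL_weakR1}.

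The real obstacle — passing to the limit in the nonlinear term $\mu(|\vT_m^\bd|)\vT_m^\bd$ using only the weak $L^2$-convergence of $(\vT_m^\bd)$ — has already been overcome by the Minty-type argument embedded in Lemmas \ref{lem:conv_product} and \ref{lem:T_to_G(u)}, which in fact upgrades $\vT_m^\bd\to\bar\vT^\bd$ to strong convergence in $M$. Granted these two lemmas, the theorem reduces to the short assembly step described above.
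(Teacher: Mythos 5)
Your proposal is correct, and it reaches \eqref{pb:NS_EL_weakR1} by a slightly different mechanism than the paper. The paper's proof, after obtaining \eqref{pb:NS_EL_weakR2} from Lemma \ref{lem:T_to_G(u)} exactly as you do, performs one more limit passage: it first shows (via \eqref{pb:Tmd} and the strong convergence of $\vT_m^\bd$) that $\mu(|\vT_m^\bd|)\vT_m^\bd \rightharpoonup \mu(|\bar\vT^\bd|)\bar\vT^\bd$ weakly in $M$, then lets $m\to\infty$ in \eqref{pb:um} for each fixed $\vw_j$ and invokes the density of $\bigcup_{m\ge 1}\Vdiv_m$ in $\Vdiv$. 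You instead bypass any further limit argument: you reuse the ``stress-form'' momentum identity \eqref{eqn:Td_u}, already derived inside the proof of Lemma \ref{lem:conv_product}, and recover \eqref{pb:NS_EL_weakR1} purely algebraically by testing \eqref{pb:NS_EL_weakR2} with $\vS=\vD(\vv)$ (legitimate, since $\vD(\vv)$ is symmetric and trace-free for $\vv\in\Vdiv$, hence lies in $M$) and substituting; this is just the reverse of the manipulation that produced \eqref{pb:NS_EL_weakR12} in Lemma \ref{lem:apriori2}. Your route is shorter and avoids having to identify the weak limit of the nonlinear term, while the paper's route yields that convergence as a byproduct and works directly from the statements of the Galerkin equations; the only caveat for you is that \eqref{eqn:Td_u} is an intermediate identity internal to the proof of Lemma \ref{lem:conv_product} rather than part of its statement, so in a self-contained write-up you should either quote its derivation (limit in \eqref{eqn:um_wj} plus density) or promote it to part of that lemma.
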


\begin{proof}
	It follows from Lemma \ref{lem:T_to_G(u)} that on the one hand $(\bar\vT^\bd,\bar\vu)$ solves \eqref{pb:NS_EL_weakR2} and on the other hand,
	$$ \lim_{m \to \infty}\mu(|\vT_m^\bd|)\vT_m^\bd = \mu(|\bar{\vT}^\bd|)\bar{\vT}^\bd\ \mbox{weakly in } M.
	$$
	Indeed, passing to the limit in \eqref{pb:Tmd} gives, for any $\vS\in M$,
	\begin{align*}
	\lim_{m\rightarrow\infty}\gamma\int_{\Omega}\mu(|\vT_m^\bd|)\vT_m^\bd:\vS & =  \lim_{m\rightarrow\infty}\left( \int_{\Omega}\vD(\vu_m):\vS-\alpha\int_{\Omega}\vT_m^\bd:\vS \right) \\
	& =   \int_{\Omega}\vD(\bar{\vu}):\vS-\alpha\int_{\Omega}\bar{\vT}^\bd:\vS \\
	& =  \gamma\int_{\Omega}\mu(|\bar{\vT}^\bd|)\bar{\vT}^\bd:\vS.
	\end{align*}
	Therefore, taking the limit as $m\rightarrow\infty$ in \eqref{pb:um} we get
	\begin{equation*}
	\int_{\Omega}[(\bar{\vu}\cdot\nabla)\bar{\vu}]\cdot\vw_j + \frac{1}{\alpha}\int_{\Omega}\vD(\bar{\vu}):\vD(\vw_j)=\int_{\Omega}\vf\cdot\vw_j+\frac{\gamma}{\alpha}\int_{\Omega}\mu(|\bar{\vT}^\bd|)\bar{\vT}^\bd:\vD(\vw_j)
	\end{equation*}
	for each $j=1,2,\ldots$, and thus the density of $\bigcup_{m \geq 1}\Vdiv_m$ in $\Vdiv$ implies that	 
	\begin{equation*}
	\int_{\Omega}[(\bar{\vu}\cdot\nabla)\bar{\vu}]\cdot\vv + \frac{1}{\alpha}\int_{\Omega}\vD(\bar{\vu}):\vD(\vv)=\int_{\Omega}\vf\cdot\vv+\frac{\gamma}{\alpha}\int_{\Omega}\mu(|\bar{\vT}^\bd|)\bar{\vT}^\bd:\vD(\vv) \quad \forall\,  \vv\in \Vdiv,
	\end{equation*}
	which is precisely \eqref{pb:NS_EL_weakR1}.
\end{proof}


\subsection{Global conditional uniqueness} \label{subsec:globaluniqu}

We now prove global uniqueness of the solution under additional assumptions on the function $\mu$ and the input data.
The notion of uniqueness we establish is global and conditional in the sense that
it holds under suitable restrictions on the data, but it is also global because
no other solution exists.

Let $\mathbb{R}_{\rm{sym},0}^{d\times d}$  denote the space of symmetric $d \times d$ matrices with vanishing trace and let $C_S$ be the smallest positive constant in the following Sobolev
embedding:
\begin{equation} \label{def:SobolevL4}
\|\vv\|_{L^4(\Omega)}\leq C_S \|\nabla \vv\|_{L^2(\Omega)} \quad \forall\, \vv\in V.
\end{equation}

\begin{prop}(Uniqueness) \label{lem:uniqueness}
	Assume that the function $\vT^\bd\mapsto\mu(|\vT^\bd|)\vT^\bd$ is Lipschitz continuous in $\mathbb{R}_{\rm{sym},0}^{d\times d}$, i.e., there exists a positive constant $\Lambda$ such that
	\begin{equation} \label{eqn:mu_Lip}
	|\mu(|\vT^\bd|)\vT^\bd-\mu(\vS^\bd)\vS^\bd|\leq \Lambda |\vT^\bd-\vS^\bd| \quad \forall\, \vT^\bd,\vS^\bd\in\mathbb{R}_{\rm{sym},0}^{d\times d}.
	\end{equation}
  	If the input data satisfy
	\begin{equation} \label{hyp:input_data}
	\frac{\gamma}{\alpha}\Lambda+\alpha^2C_S^2C_PC_K^4\|\vf\|_{L^2(\Omega)}+\alpha\gamma C_S^2C_K^3 C_1 |\Omega|^{\frac{1}{2}}<1
	\end{equation}
	then the solution of problem \eqref{pb:NS_EL_weakR1}, \eqref{pb:NS_EL_weakR2} is unique.
\end{prop}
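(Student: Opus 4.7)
The plan is to argue by contradiction in the usual way: assume two solutions $(\vT_1^\bd,\vu_1)$ and $(\vT_2^\bd,\vu_2) \in M\times\Vdiv$ of \eqref{pb:NS_EL_weakR1}, \eqref{pb:NS_EL_weakR2}, set $\vw:=\vu_1-\vu_2$ and $\vR:=\vT_1^\bd-\vT_2^\bd$, and show that under \eqref{hyp:input_data} we must have $\vD(\vw)=\mathbf 0$, hence $\vw=\mathbf 0$, which will then force $\vR=\mathbf 0$ through the constitutive equation.

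The first step is to control $\vR$ by $\vD(\vw)$. Subtracting the two copies of \eqref{pb:NS_EL_weakR2} and testing with $\vS=\vR$ yields
\begin{equation*}
\alpha\|\vR\|_{L^2(\Omega)}^2 + \gamma\int_{\Omega}\bigl(\mu(|\vT_1^\bd|)\vT_1^\bd - \mu(|\vT_2^\bd|)\vT_2^\bd\bigr):\vR = \int_{\Omega}\vD(\vw):\vR.
\end{equation*}
The monotonicity property \eqref{eqn:mu_mon} shows the $\gamma$-term is nonnegative, so Cauchy--Schwarz immediately gives the key bound $\|\vR\|_{L^2(\Omega)} \le \alpha^{-1}\|\vD(\vw)\|_{L^2(\Omega)}$. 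Note that at this step I use only monotonicity, not the Lipschitz assumption---the latter will be exploited for the momentum equation.

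Next, subtract the two copies of \eqref{pb:NS_EL_weakR1} and test with $\vv=\vw\in\Vdiv$. Splitting the convective difference as $(\vw\cdot\nabla)\vu_1+(\vu_2\cdot\nabla)\vw$ and using \eqref{eqn:conv_zero} (since $\vu_2\in\Vdiv$) to kill the second half, one obtains
\begin{equation*}
\frac{1}{\alpha}\|\vD(\vw)\|_{L^2(\Omega)}^2 = -\int_{\Omega}[(\vw\cdot\nabla)\vu_1]\cdot\vw + \frac{\gamma}{\alpha}\int_{\Omega}\bigl(\mu(|\vT_1^\bd|)\vT_1^\bd - \mu(|\vT_2^\bd|)\vT_2^\bd\bigr):\vD(\vw).
\end{equation*}
The convective term is estimated by H\"older's inequality, the Sobolev embedding \eqref{def:SobolevL4}, and Korn's inequality \eqref{def:korn} as
\begin{equation*}
\left|\int_{\Omega}[(\vw\cdot\nabla)\vu_1]\cdot\vw\right| \le \|\vw\|_{L^4(\Omega)}^2\|\nabla\vu_1\|_{L^2(\Omega)} \le C_S^2 C_K^3 \|\vD(\vw)\|_{L^2(\Omega)}^2 \|\vD(\vu_1)\|_{L^2(\Omega)},
\end{equation*}
and the \emph{a priori} bound \eqref{apriori_est:u} from Lemma \ref{lem:apriori} applied to $\vu_1$ then produces exactly the two middle terms of \eqref{hyp:input_data}. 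For the $\gamma$-term on the right, the Lipschitz hypothesis \eqref{eqn:mu_Lip} together with the already established inequality $\|\vR\|_{L^2(\Omega)}\le\alpha^{-1}\|\vD(\vw)\|_{L^2(\Omega)}$ yields the bound $\frac{\gamma\Lambda}{\alpha^2}\|\vD(\vw)\|_{L^2(\Omega)}^2$, which after multiplication by $\alpha$ matches the first term of \eqref{hyp:input_data}.

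Collecting everything leads to
\begin{equation*}
\|\vD(\vw)\|_{L^2(\Omega)}^2 \le \left(\tfrac{\gamma}{\alpha}\Lambda + \alpha^2 C_S^2 C_P C_K^4 \|\vf\|_{L^2(\Omega)} + \alpha\gamma C_S^2 C_K^3 C_1 |\Omega|^{1/2}\right)\|\vD(\vw)\|_{L^2(\Omega)}^2,
\end{equation*}
so assumption \eqref{hyp:input_data} forces $\vD(\vw)=\mathbf 0$, hence $\vu_1=\vu_2$. Plugging this into the subtracted constitutive equation and using the strict monotonicity in \eqref{eqn:mu_mon} (or directly the bound of Step 1) then gives $\vR=\mathbf 0$. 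The main obstacle I expect is bookkeeping of the Sobolev/Korn constants so that the final coefficient lines up exactly with \eqref{hyp:input_data}; the structural argument itself is essentially forced by the shape of the inequality.
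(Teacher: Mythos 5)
Your argument is correct and coincides with the paper's own proof: the same subtraction of the two solutions, the monotonicity-based bound $\|\vT_1^\bd-\vT_2^\bd\|_{L^2(\Omega)}\leq \alpha^{-1}\|\vD(\vu_1-\vu_2)\|_{L^2(\Omega)}$, the same splitting of the convective difference with \eqref{eqn:conv_zero}, the Lipschitz estimate for the $\gamma$-term, and the \emph{a priori} bound \eqref{apriori_est:u} for $\vu_1$, leading to exactly the smallness condition \eqref{hyp:input_data}. Your constant bookkeeping (including the multiplication by $\alpha$) lines up with the paper, so nothing needs to be changed.
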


\begin{proof}
	We use a variational argument. Suppose that $(\vT_1^\bd,\vu_1),(\vT^\bd_2,\vu_2)\in M\times \Vdiv$ are solutions of \eqref{pb:NS_EL_weakR1}, \eqref{pb:NS_EL_weakR2}. Let us write $\delta\vT^\bd:=\vT_1^\bd-\vT_2^\bd$ and $\delta\vu:=\vu_1-\vu_2$. Subtracting the equations solved by $(\vT^\bd_2,\vu_2)$
	from those solved by $(\vT^\bd_1,\vu_1)$ we get for all $(\vS,\vv)\in M\times\Vdiv$ the following pair of equalities:
	\begin{align}
	\int_{\Omega}\left[(\vu_1\cdot\nabla)\vu_1-(\vu_2\cdot\nabla)\vu_2\right]\cdot\vv+\frac{1}{\alpha}\int_{\Omega}\vD(\delta\vu):\vD(\vv) & =  \frac{\gamma}{\alpha}\int_{\Omega}\big(\mu(|\vT_1^\bd|)\vT_1^\bd-\mu(|\vT_2^\bd|)\vT_2^\bd\big):\vD(\vv),\label{eqn:deltau} \\
	\alpha\int_{\Omega}\delta\vT^\bd:\vS+\gamma\int_{\Omega}\big(\mu(|\vT_1^\bd|)\vT_1^\bd-\mu(|\vT_2^\bd|)\vT_2^\bd\big):\vS & =  \int_{\Omega}\vD(\delta\vu):\vS. \label{eqn:deltaT}
	\end{align}
	The choice $\vS=\delta\vT^\bd$ in \eqref{eqn:deltaT}, thanks to the monotonicity property \eqref{eqn:mu_mon}, leads to
	\begin{equation} \label{eqn:bound_deltaT}
	\|\delta\vT^\bd\|_{L^2(\Omega)}\leq \frac{1}{\alpha}\|\vD(\delta\vu)\|_{L^2(\Omega)}.
	\end{equation}
	Then, by noting that
	\begin{equation*}
	\int_{\Omega}\left[(\vu_1\cdot\nabla)\vu_1-(\vu_2\cdot\nabla)\vu_2\right]\cdot\vv = \int_{\Omega}\left[(\delta\vu\cdot\nabla)\vu_1\right]\cdot\vv+\int_{\Omega}\left[(\vu_2\cdot\nabla)\delta\vu\right]\cdot\vv,
	\end{equation*}
	by testing \eqref{eqn:deltau} with  $\vv=\delta\vu$, and recalling \eqref{eqn:conv_zero} we obtain
	\begin{eqnarray*}
	\frac{1}{\alpha}\|\vD(\delta\vu)\|_{L^2(\Omega)}^2 & = &  \frac{\gamma}{\alpha}\int_{\Omega}\big(\mu(|\vT_1^\bd|)\vT_1^\bd-\mu(|\vT_2^\bd|)\vT_2^\bd\big):\vD(\delta\vu)-\int_{\Omega}[(\delta\vu\cdot\nabla)\vu_1]\cdot\delta\vu \\
	& \stackrel{\eqref{eqn:mu_Lip}}{\leq} & \frac{\gamma}{\alpha}\Lambda\|\delta\vT^\bd\|_{L^2(\Omega)}\|\vD(\delta\vu)\|_{L^2(\Omega)}+\|\delta\vu\|_{L^4(\Omega)}^2\|\nabla\vu_1\|_{L^2(\Omega)} \\
	& \stackrel{\eqref{def:SobolevL4},\!~\eqref{def:korn}}{\leq} & \frac{\gamma}{\alpha}\Lambda\|\delta\vT^\bd\|_{L^2(\Omega)}\|\vD(\delta\vu)\|_{L^2(\Omega)}+C_S^2C_K^3\|\vD(\vu_1)\|_{L^2(\Omega)}\|\vD(\delta\vu)\|_{L^2(\Omega)}^2 \\
	& \stackrel{\eqref{eqn:bound_deltaT},\!~\eqref{apriori_est:u}}{\leq} & \left[\frac{\gamma}{\alpha^2}\Lambda+C_S^2C_K^3\left(\alpha C_PC_K\|\vf\|_{L^2(\Omega)}+\gamma C_1 |\Omega|^{\frac{1}{2}}\right)\right]\|\vD(\delta\vu)\|_{L^2(\Omega)}^2. \\
	\end{eqnarray*}
	The assumption \eqref{hyp:input_data} on the data guarantees that the factor
	on the right-hand side of the last inequality is strictly smaller than
	$\frac{1}{\alpha}$, thus implying that  $\|\vD(\delta\vu)\|_{L^2(\Omega)}=0$, i.e., $\vu_1=\vu_2$. Finally, applying this result to \eqref{eqn:bound_deltaT} yields $\vT_1^\bd=\vT_2^\bd$.
\end{proof}

\begin{remark} \label{rem:rem3}
	{\rm The strategy used in deriving the second \emph{a priori} estimate stated in
	Lemma \ref{lem:apriori2} leads to uniqueness when  \eqref{hyp:input_data}
	is replaced by
	\begin{equation} \label{hyp:input_data2}
	\frac{\gamma}{\alpha}\Lambda+\alpha C_S^2C_PC_K^4(\alpha+\gamma\mu_{\max})\|\vf\|_{L^2(\Omega)}<1.
	\end{equation}
	In fact both strategies lead to the same condition \eqref{hyp:input_data2}; namely, we also get \eqref{hyp:input_data2} by using \eqref{apriori_est:u2} instead of \eqref{apriori_est:u} to bound $\|\vD(\vu_1)\|_{L^2(\Omega)}$ in the proof of Proposition \ref{lem:uniqueness}.}
\end{remark}
Note that both \eqref{hyp:input_data} and \eqref{hyp:input_data2} hold when $\gamma$ and $\vf$ are sufficiently small.

\begin{remark} \label{rem:rem3bis}
	{\rm Under the Lipschitz condition \eqref{eqn:mu_Lip}, the proof of \eqref{apriori_est:u3bis} and \eqref{apriori_est:Td2} is valid with $\mu_{{\rm \max}}$ replaced by $\Lambda$, and the $L^2(\Omega)^d$ norm of $\vf$ (multiplied by $C_P C_K$) replaced by its norm in $\Vdiv'$,  see Remark \ref{rem:1}. More precisely,
	\begin{equation} \label{apriori_est:u3}
	\|\vD(\vu)\|_{L^2(\Omega)}\leq \frac{1}{\alpha}(\alpha+\gamma\Lambda)^2\|\vf\|_{\Vdiv'},
	\end{equation}
	\begin{equation} \label{apriori_est:Td3}
	\|\vT^\bd\|_{L^2(\Omega)}\leq \frac{1}{\alpha}(\alpha+\gamma\Lambda)\|\vf\|_{\Vdiv'}.
	\end{equation}}
\end{remark}		

	
\subsection{Comparison of the \emph{a priori} estimates} \label{subsec:comp-apriori}

At this stage, it is useful to compare the \emph{a priori} estimates derived in the previous sections. We have

\begin{equation} \label{eq:C_u}
\|\vD(\vu)\|_{L^2(\Omega)}  \leq  C_{\vu}:=\min\left\{(\alpha+\gamma\mu_{\max})\|\vf\|_{\Vdiv'},\alpha\|\vf\|_{\Vdiv'}+\gamma C_1 |\Omega|^{\frac{1}{2}},\frac{1}{\alpha}(\alpha+\gamma\Lambda)^2\|\vf\|_{\Vdiv'}\right\},
\end{equation}
\begin{equation} \label{eq:C_T}
\|\vT^\bd\|_{L^2(\Omega)}  \leq  C_{\vT^\bd}:=\min\left\{\frac{1}{\alpha}(\alpha+\gamma\mu_{ \max})\|\vf\|_{\Vdiv'},\|\vf\|_{\Vdiv'}+\frac{\gamma}{\alpha} C_1 |\Omega|^{\frac{1}{2}},\frac{1}{\alpha}(\alpha+\gamma\Lambda)\|\vf\|_{\Vdiv'}\right\},
\end{equation}
where  $\Lambda$ is replaced by $\mu_{ \max}$ if we do not make the Lipschitz assumption \eqref{eqn:mu_Lip}. For $p$ we have
\begin{equation*}
\|p\|_{L^2(\Omega)} \leq \frac{1}{\beta}\left(C_S^2C_K^2C_{\vu}^2+\|\vf\|_{(\Vdiv^{\perp})'}+\min\left\{\min\left(1,\frac{\gamma\mu_{\max}}{\alpha},\frac{\gamma\Lambda}{\alpha}\right)C_{\vT^\bd},\frac{\gamma}{\alpha} C_1 |\Omega|^{\frac{1}{2}}\right\}\right),
\end{equation*}
where $\Vdiv^{\perp}$ denotes the orthogonal complement of $\Vdiv$ in $V$ with respect to the inner product \eqref{def:inner_prod_D}.

\begin{remark} \label{rem:5}
	{\rm We can replace $C_S^2$ by the product $C_pC_r$  of the smallest constants $C_p$ and $C_r$ from the Sobolev embedding of $H^1(\Omega)^d$ into $L^p(\Omega)^d$ and $L^r(\Omega)^d$, respectively, with $p=6$ and $r=3$.  We could also use the best constant $\hat{C}$ such that
	$$\int_{\Omega}[(\vu\cdot\nabla)\vv]\cdot\vw\leq \hat{C} \|\nabla\vu\|_{L^2(\Omega)}\|\nabla\vv\|_{L^2(\Omega)}\|\nabla\vw\|_{L^2(\Omega)},$$
	or even
	$$\int_{\Omega}[(\vu\cdot\nabla)\vv]\cdot\vw\leq \hat{C} \|\vD(\vu)\|_{L^2(\Omega)}\|\vD(\vv)\|_{L^2(\Omega)}\|\vD(\vw)\|_{L^2(\Omega)}.$$
	In the former case, $\hat{C}\leq C_pC_r$ while in the latter case, $\hat{C}\leq C_K^3C_pC_r$.}
\end{remark}

\section{ Conforming finite element approximation} \label{sec:NS_ENL_App}

In this section, we study conforming finite element approximations of problem \eqref{pb:NS_EL_v2}, where conformity refers to the discrete velocity space. To facilitate the implementation, it is useful to relax the zero trace restriction on the discrete tensor space, but this is not quite a nonconformity because the theoretical analysis of the preceding sections holds without this condition. In particular, the inf-sup condition \eqref{infsup:b1} is still valid (supremum over a larger space).

We start with the numerical analysis of general conforming approximations, including existence of discrete solutions, convergence, and error estimates, and give specific examples further on.

	
\subsection{General conforming approximation} \label{subsec:Gen_conf_app}

As stated above, here $M=L^2(\Omega)_{{\rm sym}}^{d\times d}$. Up to this modification, we propose to discretise the formulation  derived from \eqref{pb:NS_EL_v2}: find $(\vT^\bd,\vu,p)\in M\times V \times Q$ such that
\begin{alignat}{2} \label{pb:weak_cont}
\begin{aligned}
\displaystyle{\int_{\Omega}}\left[(\vu\cdot\nabla)\vu\right]\cdot\vv+\displaystyle{\int_{\Omega}}\vT^\bd:\vD(\vv)+b_2(\vv,p) & =  \displaystyle{\int_{\Omega}}\vf\cdot\vv &&\quad \forall\, \vv\in V, \\
\alpha\displaystyle{\int_{\Omega}}\vT^\bd:\vS+\gamma\displaystyle{\int_{\Omega}}\mu(|\vT^\bd|)\vT^\bd:\vS & =  \displaystyle{\int_{\Omega}}\vD(\vu):\vS &&\quad \forall\, \vS\in M, \\
b_2(\vu,q) & =  0 &&\quad \forall\, q\in Q.
\end{aligned}
\end{alignat}
Note that, since ${\rm div}(\vu) =0$, by taking $\vS = \vI$ the second line of \eqref{pb:weak_cont} implies that the solution $\vT^\bd$ of \eqref{pb:weak_cont} satisfies $\tr(\vT^\bd)=0$ a.e. in $\Omega$,  even though this condition was not explicitly imposed on elements of $M$.

Let $h >0$ be a discretisation parameter that will tend to zero and, for each $h$, let $V_h\subset V$, $Q_h\subset Q$ and $M_h\subset M$ be three finite-dimensional spaces satisfying the following basic approximation properties, for all $\vS \in M$, $\vv \in V$ and $q \in Q$:
$$\lim_{h \to 0} \inf_{\vS_h \in M_h}\|\vS_h-\vS\|_{L^2(\Omega)} = 0,\quad  \lim_{h \to 0} \inf_{\vv_h \in V_h}\|\vD(\vv_h-\vv)\|_{L^2(\Omega)} =0, \quad \lim_{h \to 0} \inf_{q_h \in Q_h}\|q_h-q\|_{L^2(\Omega)} =0.
$$
Moreover, let
\begin{equation} \label{def:Vh0}
V_{h,0}:=\{\vv_h\in V_h: \quad b_2(\vv_h,q_h)=0 \quad \forall\, q_h\in Q_h\}.
\end{equation}
We assume  on the one hand that  the pair $(V_h,Q_h)$ is uniformly stable for the divergence, i.e.,
\begin{equation} \label{infsup:b1h}
\inf_{q_h\in Q_h}\sup_{\vv_h\in V_h}\frac{b_2(\vv_h,q_h)}{\|\vD(\vv_h)\|_{L^2(\Omega)}\|q_h\|_{L^2(\Omega)}}\geq \beta^{*}
\end{equation}
for some constant $\beta^{*} >0$, independent of $h$, and  on the other hand that $M_h$ and $V_{h,0}$ are compatible in the sense that
\begin{equation} \label{eqn:eps_in_Mh}
\vD(\vv_h)\in M_h \quad \forall\, \vv_h\in V_{h,0}.
\end{equation}
Note that the latter assumption may be prohibitive when considering conforming finite elements on quadrilateral ($d=2$) or hexahedral ($d=3$) meshes, see Subsection \ref{subsec:Ex_conf_app}; this motivates the study of non-conforming finite elements considered in Section \ref{sec:NS_ENL_AppNC}.
The inf-sup condition \eqref{infsup:b1h} guarantees that
\begin{equation} \label{eq:appVh0}
\lim_{h \to 0} \inf_{\vv_h \in V_{h,0}}\|\vD(\vv_h-\vu)\|_{L^2(\Omega)} =0.
\end{equation}
Indeed, \eqref{infsup:b1h} implies  the relation
\begin{equation} \label{eq:bdd_vh0}
\inf_{\vv_{h,0}\in V_{h,0}}\| \vD(\vu-\vv_{h,0})\|_{ L^2(\Omega)} \leq \left(1+\frac{c_b}{\beta^*}\right)\inf_{\vv_h\in V_h}\| \vD(\vu-\vv_h)\|_{ L^2(\Omega)},
\end{equation}
which can be shown using a standard argument; see for instance~\cite{ref:GiR}. Here, $c_b$ denotes the continuity constant of $b_2(\cdot,\cdot)$ on $V\times Q$.

As the divergence of functions of $V_{h,0}$ is not necessarily zero, the antisymmetry property
\eqref{eqn:conv_zero} does not hold in the discrete spaces. Since this property is
a crucial ingredient in the analysis of our problem, it is standard (see for instance~\cite{Tem,ref:GiR}) to introduce the trilinear form
$d:V\times V\times V\rightarrow\mathbb{R}$ defined by
\begin{equation} \label{def:form_d}
d(\vu;\vv,\vw):= \frac{1}{2}\int_{\Omega}\left[(\vu\cdot\nabla)\vv\right]\cdot\vw-\frac{1}{2}\int_{\Omega}\left[(\vu\cdot\nabla)\vw\right]\cdot\vv.
\end{equation}
The trilinear form $d$ is obviously antisymmetric and it is consistent thanks to the fact that
\begin{equation*}
d(\vu;\vv,\vw)=\int_{\Omega}\left[(\vu\cdot\nabla)\vv\right]\cdot\vw \quad \forall\, \vu\in \Vdiv,\; \forall\, \vv,\vw\in V.
\end{equation*}
Moreover, a standard computation shows that there exists a constant $\hat{D}\leq \min(C_S^2,C_3C_6)C_K^3$ such that
\begin{equation} \label{bound:form_d}
d(\vu;\vv,\vw)\leq \hat{D}\| \vD(\vu)\|_{ L^2(\Omega)}\| \vD(\vv)\|_{ L^2(\Omega)}\| \vD(\vw)\|_{ L^2(\Omega)} \quad \forall\, \vu,\vv,\vw\in V.
\end{equation}
We then consider the following approximation of problem \eqref{pb:weak_cont}: find $(\vT_h,\vu_h,p_h)\in M_h\times V_h\times Q_h$ such that

\begin{alignat}{2} \label{pb:weak_disc}
\begin{aligned}
d(\vu_h;\vu_h,\vv_h)+\displaystyle{\int_{\Omega}}\vT_h:\vD(\vv_h)+b_2(\vv_h,p_h) & =  \displaystyle{\int_{\Omega}}\vf\cdot\vv_h &&\quad \forall\, \vv_h\in V_h, \\
\alpha\displaystyle{\int_{\Omega}}\vT_h:\vS_h+\gamma\displaystyle{\int_{\Omega}}\mu(|\vT_h|)\vT_h:\vS_h & =  \displaystyle{\int_{\Omega}}\vD(\vu_h):\vS_h &&\quad \forall\, \vS_h\in M_h, \\
b_2(\vu_h,q_h) & =  0 &&\quad \forall\, q_h\in Q_h.
\end{aligned}
\end{alignat}


\subsubsection{Existence  of a discrete solution}

Existence of a solution to problem \eqref{pb:weak_disc} without restrictions on the data is established by Brouwer's fixed point theorem, as in Section \ref{subsec:existence}. To begin with, for any function $\vv \in V$, we define the discrete analogue of the mapping $\G$, see \eqref{eqn:def_map_G}; namely, $\G_h(\vv) \in M_h$ is the unique solution of
\begin{equation} \label{eqn:def_map_Gh}
\alpha\int_{\Omega}\G_h(\vv):\vS_h+\gamma\int_{\Omega}\mu(|\G_h(\vv)|)\G_h(\vv):\vS_h
=\int_{\Omega}\vD(\vv):\vS_h\quad \forall\, \vS_h\in M_h.
\end{equation}
This finite-dimensional square system has one and only one solution $\G_h(\vv)$ thanks to the properties of the left-hand side: the first term is elliptic and the second term is monotone. As in  Section \ref{subsec:existence}, in view of the inf-sup condition \eqref{infsup:b1h}, problem \eqref{pb:weak_disc} is equivalent to finding $\vu_h \in V_{h,0}$ solution of
\begin{equation} \label{eq:reduce-discrete}
d(\vu_h;\vu_h,\vv_h) + \int_{\Omega}\vT_h: \vD({\vv_h}) = \int_{\Omega}\vf\cdot\vv_h\quad  \forall\, \vv_h\in V_{h,0},
\end{equation}
where $\vT_h:=\G_h(\vu_h)$.
By proceeding as in Proposition \ref{pro:exist_um}, it is easy to prove that problem \eqref{eq:reduce-discrete} has at least one solution $\vu_h \in V_{h,0}$, and by the above equivalence, each solution $\vu_h$ determines a  pair $(\vT_h,p_h) \in M_h\times Q_h$ so that $(\vT_h,\vu_h,p_h)$ solves problem \eqref{pb:weak_disc}. Moreover, each solution of problem \eqref{pb:weak_disc} satisfies the same estimates as in \eqref{apriori_est:u} and \eqref{apriori_est:Td}. For the sake of simplicity, since the approximation is conforming, we state them in terms of the norm of $\vf$ in $H^{-1}(\Omega)^d$,
\begin{equation} \label{apriori_est:uh}
\|\vD(\vu_h)\|_{L^2(\Omega)} \leq \alpha \|\vf\|_{H^{-1}(\Omega)}+\gamma C_1 |\Omega|^{\frac{1}{2}}
\end{equation}
and
\begin{equation} \label{apriori_est:Tdh}
\|\vT_h\|_{L^2(\Omega)} \leq \|\vf\|_{H^{-1}(\Omega)}+\frac{\gamma}{\alpha} C_1 |\Omega|^{\frac{1}{2}}.
\end{equation}
Regarding the other \emph{a priori} bounds, \eqref{apriori_est:u3bis} and  \eqref{apriori_est:Td2} are satisfied by  $\vu_h$ and $\vT_h$ and, if \eqref{eqn:mu_Lip} holds,  so are \eqref{apriori_est:Td3} and  \eqref{apriori_est:u3}, all up to the above norm for $\vf$. In contrast, however, we do not have enough information to claim that  \eqref{apriori_est:u2} is valid because it relies on the nonnegativity of $\vT_h: \vD(\vu_h)$ almost everywhere in $\Omega$; the integral average is positive but this does not always guarantee pointwise nonnegativity. Thus we replace the constant $C_{\vu}$ of \eqref{eq:C_u} by the constant $\widetilde{C_{\vu}}$ in the following inequality:
\begin{equation} \label{eq:C_uh}
\|\vD(\vu_h)\|_{L^2(\Omega)} \leq \widetilde{C_{\vu}} : = \min\left\{\frac{1}{\alpha}(\alpha+\gamma\mu_{\max})^2\|\vf\|_{H^{-1}(\Omega)},\alpha\|\vf\|_{H^{-1}(\Omega)}+\gamma C_1 |\Omega|^{\frac{1}{2}},\frac{1}{\alpha}(\alpha+\gamma\Lambda)^2\|\vf\|_{H^{-1}(\Omega)}\right\},
\end{equation}
where the last  term  is included when \eqref{eqn:mu_Lip} holds. Because $C_{\vu} \le \widetilde{C_{\vu}}$, we shall use $\widetilde{C_{\vu}}$ to bound both $\vu$ and $\vu_h$ in order to simplify the constants in the computations that will now follow.


Finally, let us establish the convergence of the sequence of discrete solutions in the limit of $h \rightarrow 0$. The above uniform \emph{a priori} estimates imply that, up to a subsequence of the discretisation parameter $h$,
\begin{alignat*}{2}
 \lim_{h \to 0} \vu_h & =\bar{\vu} & & \quad\mbox{weakly in } H_0^1(\Omega)^d ,\\
 \lim_{h \to 0} \vu_h & =\bar{\vu} & & \quad\mbox{strongly in } L^q(\Omega)^d\,\,  \mbox{with }1\leq q < \infty\; \mbox{ if }\, d=2, \mbox{ and }1 \leq q < 6\; \mbox{ if }\, d=3,  \\
 \lim_{h \to 0} \vT_h & =\bar{\vT} & & \quad \mbox{weakly in } L^2(\Omega)^{d \times d},
\end{alignat*}
for some $\bar{\vu}\in H_0^1(\Omega)^d$ and $\bar{\vT}\in L^2(\Omega)^{d \times d}$. Clearly, the symmetry of $\vT_h$ implies that of $\bar{\vT}$ and ${\rm div}(\bar\vu) = 0 $ follows from the fact that $\vu_h$ belongs to $V_{h,0}$. Then the approximation properties of the discrete spaces and  \eqref{eq:appVh0} permit to replicate the steps of the proof of Lemma \ref{lem:conv_product} and yield
\begin{equation} \label{eqn:lim_suph}
\lim_{h\rightarrow 0}\int_{\Omega}\vT_h:\vD(\vu_h)= \int_{\Omega}\bar{\vT}:\vD(\bar{\vu}).
\end{equation}
To fully identify the limit, in addition to $\tilde\vT^\bd := \G(\bar{\vu})$,  which has trace zero since $\di(\bar \vu)=0$, we introduce the auxiliary tensor $\tilde\vT_h := \G_h(\bar{\vu})$. On the one hand
$$\alpha \int_\Omega (\tilde\vT_h-\tilde\vT^\bd):\vS_h + \gamma\int_{\Omega}\big(\mu(|\tilde\vT_h|)\tilde\vT_h -\mu(|\tilde\vT^\bd|)\tilde\vT^\bd\big) :\vS_h = 0 \quad \forall\, \vS_h \in M_h, $$
thus implying that, for all $\vS_h$ in $M_h$,
\begin{align*}
\alpha\|\tilde\vT_h-&\tilde\vT^\bd\|^2_{ L^2(\Omega)} + \gamma\int_{\Omega}\big(\mu(|\tilde\vT_h|)\tilde\vT_h -\mu(|\tilde\vT^\bd|)\tilde\vT^\bd\big): (\tilde\vT_h-\tilde\vT^\bd)\\
& = \alpha \int_\Omega (\tilde\vT_h-\tilde\vT^\bd):(\vS_h-\tilde\vT^\bd) + \gamma\int_{\Omega}\big(\mu(|\tilde\vT_h|)\tilde\vT_h -\mu(|\tilde\vT^\bd|)\tilde\vT^\bd\big): (\vS_h-\tilde\vT^\bd).
\end{align*}
Since both $\tilde\vT_h$ and $\tilde\vT^\bd$ are bounded in $M$ uniformly with respect to $h$, and
$$\|\mu(|\tilde\vT_h|)\tilde\vT_h -\mu(|\tilde\vT^\bd|)\tilde\vT^\bd\|_{ L^2(\Omega)} \le 2 C_1 |\Omega|^{\frac{1}{2}},$$
again a uniform bound, then the approximation properties of $M_h$ and
the monotonicity property \eqref{eqn:mu_mon} imply that
\begin{equation} \label{eq:tildeTh-tildeT}
\lim_{h \to 0}  \|\tilde\vT_h-\tilde\vT^\bd\|_{ L^2(\Omega)} =0.
\end{equation}
On the other hand, the auxiliary tensor $\tilde\vT_h$ permits us to argue as in the proof of Lemma \ref{lem:T_to_G(u)}. Indeed, the monotonicity property \eqref{eqn:mu_mon} yields
\begin{align*}
\alpha \|\vT_h- \tilde\vT_h\|^2_{ L^2(\Omega)}  &\le \int_{\Omega}\vD(\vu_h) :(\vT_h - \tilde\vT_h) -
\int_{\Omega}\vD(\bar \vu) :(\vT_h - \tilde\vT_h)\\
& = \int_{\Omega}\vD(\vu_h) :\vT_h - \int_{\Omega}\vD(\vu_h) :\tilde\vT_h
-  \int_{\Omega}\vD(\bar \vu) :(\vT_h - \tilde\vT_h).
\end{align*}
From \eqref{eqn:lim_suph} and  \eqref{eq:tildeTh-tildeT}, we easily derive that the above right-hand side tends to zero. Hence
$$\lim_{h \to 0}  \|\vT_h-\tilde\vT_h\|_{ L^2(\Omega)} =0, $$
and then combining this with \eqref{eq:tildeTh-tildeT} we infer that
\begin{equation} \label{eq:Th-tildeT}
\lim_{h \to 0}  \|\vT_h-\tilde\vT^\bd\|_{ L^2(\Omega)} =0.
\end{equation}
Hence uniqueness of the limit implies that $\bar \vT = \tilde\vT^\bd = \G(\bar \vu)$. This, and  \eqref{infsup:b1h}, permit to identify the limit as in Lemma \ref{lem:T_to_G(u)} and Theorem \ref{thm:existence1}, and proves convergence to a weak solution without restrictions on the data.
Thus we have proved the following result.

\begin{thm}(Convergence for all data) \label{thm:uncondconvh}
	Under the above approximation properties and compatibility of the discrete spaces, up to a subsequence,
	\begin{alignat*}{2}
	\lim_{h \to 0} \vu_h &= \vu  &&\quad\mbox{weakly in } H_0^1(\Omega)^d, \, \\
	\lim_{h \to 0}\vu_h &= \vu  &&\quad\mbox{strongly in } L^q(\Omega)^d\,\,  \mbox{with }1\leq q < \infty\; \mbox{ if }\, d=2, \mbox{ and }1 \leq q < 6\; \mbox{ if }\, d=3,  \\
	\lim_{h \to 0}\vT_h &= \vT^\bd  &&\quad \mbox{strongly in } L^2(\Omega)^{d \times d},\\
	\lim_{h \to 0} p_h &= p   &&\quad\mbox{weakly in } L^2(\Omega),
	\end{alignat*}
	where $(\vT^\bd,\vu,p)$ is a solution of \eqref{pb:NS_EL_weakR1}, \eqref{pb:NS_EL_weakR2}.
\end{thm}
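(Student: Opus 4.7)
The plan is to assemble the ingredients that have been developed in the paragraphs immediately preceding the theorem into a coherent argument, essentially mirroring the strategy of Section~\ref{subsec:existence} but now in the discrete-to-continuous limit $h\to 0$ instead of the Galerkin limit $m\to\infty$.

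First, I would invoke the uniform \emph{a priori} bounds \eqref{apriori_est:uh} and \eqref{apriori_est:Tdh} (valid for any solution $(\vT_h,\vu_h,p_h)$ by the argument sketched just above the theorem) to extract weakly convergent subsequences: $\vu_h\rightharpoonup\bar{\vu}$ in $H_0^1(\Omega)^d$ and $\vT_h\rightharpoonup\bar{\vT}$ in $L^2(\Omega)_{{\rm sym}}^{d\times d}$. The Rellich--Kondrachov theorem upgrades the first to strong convergence in $L^q(\Omega)^d$ for the stated range of $q$. The constraint $\di(\bar{\vu})=0$ follows from the third line of \eqref{pb:weak_disc} combined with the approximation property of $Q_h$, and $\tr(\bar{\vT})=0$ follows from $\di(\bar\vu)=0$ via the constitutive equation (taking $\vS=\vI$ in the continuous limit, once the second equation is established).

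The central issue, which is also the main obstacle, is to identify $\bar{\vT}$ with $\G(\bar{\vu})$ so that the pair $(\bar{\vT},\bar{\vu})$ solves \eqref{pb:NS_EL_weakR1}, \eqref{pb:NS_EL_weakR2}. Passing to the limit in the first line of \eqref{pb:weak_disc} (using the strong convergence of $\vu_h$ to handle the convective term $d(\vu_h;\vu_h,\vv_h)$ for a fixed test function in $\bigcup_h V_{h,0}$, together with \eqref{eq:appVh0}) gives the desired equation provided we can pass to the limit in the stress term. The weak convergence of $\vT_h$ suffices for the momentum balance, but the nonlinearity $\mu(|\vT_h|)\vT_h$ in the constitutive equation requires more. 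Following the paragraph preceding the theorem, I would introduce the auxiliary tensor $\tilde{\vT}_h:=\G_h(\bar{\vu})$ and $\tilde{\vT}^\bd:=\G(\bar{\vu})$, and then deploy a two-step Minty-type monotonicity argument: the approximation properties of $M_h$ combined with \eqref{eqn:mu_mon} yield $\tilde{\vT}_h\to\tilde{\vT}^\bd$ strongly (as in \eqref{eq:tildeTh-tildeT}); then testing the difference of the discrete constitutive equations with $\vT_h-\tilde{\vT}_h$ and using the key identity \eqref{eqn:lim_suph} (which itself follows by combining \eqref{pb:weak_disc}, the antisymmetry of $d$, and the strong $L^q$ convergence of $\vu_h$, exactly as in Lemma~\ref{lem:conv_product}) produces $\vT_h\to\tilde{\vT}^\bd$ strongly in $L^2(\Omega)^{d\times d}$, cf.~\eqref{eq:Th-tildeT}. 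Uniqueness of weak limits then forces $\bar{\vT}=\tilde{\vT}^\bd=\G(\bar{\vu})$, whence $(\bar{\vT},\bar{\vu})$ satisfies the constitutive relation \eqref{pb:NS_EL_weakR2}, and the strong convergence of $\vT_h$ permits a trivial passage to the limit in the first equation of \eqref{pb:weak_disc}, giving \eqref{pb:NS_EL_weakR1}.

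Finally, to recover the pressure $p$, I would use the continuous inf-sup condition \eqref{infsup:b2}: the first line of \eqref{pb:weak_disc} tested against an arbitrary $\vv_h\in V_h$ (not just $V_{h,0}$) defines $p_h$ uniquely via \eqref{infsup:b1h}, and the uniform bound on $p_h$ in $L^2(\Omega)$ follows from \eqref{infsup:b1h} together with the already-established bounds on $\vu_h$ and $\vT_h$. Extracting a further subsequence, $p_h\rightharpoonup p$ weakly in $L^2(\Omega)$, and passing to the limit in the momentum equation identifies $p$ as the pressure associated with $(\bar{\vT},\bar{\vu})$ through \eqref{infsup:b2}. The only technical delicacy worth flagging is ensuring that the density of $\bigcup_h V_{h,0}$ in $\Vdiv$ (guaranteed by \eqref{eq:appVh0}) is used when testing against limits of discrete functions; all other steps are direct adaptations of the Galerkin argument of Section~\ref{subsec:existence}.
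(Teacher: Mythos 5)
Your proposal is correct and follows essentially the same route as the paper: uniform bounds from \eqref{apriori_est:uh}--\eqref{apriori_est:Tdh}, the limit identity \eqref{eqn:lim_suph} obtained by adapting Lemma~\ref{lem:conv_product}, the auxiliary tensors $\G_h(\bar{\vu})$ and $\G(\bar{\vu})$ with the two-step monotonicity argument giving strong convergence of $\vT_h$, and the discrete inf-sup condition \eqref{infsup:b1h} to bound and pass to the limit in the pressure. No substantive differences from the paper's own argument.
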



\subsubsection{Error estimate} \label{subsec:error_estimate}

We now prove  an \emph{a priori} error estimate between $(\vT^\bd,\vu,p)$ and $(\vT_h,\vu_h,p_h)$,  under the assumption \eqref{eqn:mu_Lip}  that has not been used so far, and the small data condition \eqref{eq:smalldata} below. Note that this small data condition is in fact the same as the uniqueness condition \eqref{hyp:input_data},  upon replacing $C_{\vu}$ by $\widetilde{C_{\vu}}$.  To simplify the notation and compress some of the long displayed lines of mathematics, we shall write $\|\cdot\|_V$, $\|\cdot\|_M$ and $\|\cdot\|_Q$
instead of $\|D(\cdot)\|_{L^2(\Omega)}$ (as a norm on $V$), 
$\|\cdot\|_{L^2(\Omega)}$ (as a norm on $M$) and $\|\cdot\|_{L^2(\Omega)}$ (as a norm on $Q$), respectively.

\begin{thm} \label{thm:apriorierror}
	In addition to \eqref{eqn:mu_Lip}, let the input data satisfy
	\begin{equation} \label{eq:smalldata}
	\frac{\gamma}{\alpha}\Lambda +\alpha\hat{D}\widetilde{C_{\vu}}\leq\theta<1,
	\end{equation}
	where $0<\theta <1$ and $\hat{D}$ is the constant from \eqref{bound:form_d}.
 	Then, there exists a constant $C>0$ independent of $h$ such that the difference between the solution $(\vT_h,\vu_h,p_h)$ of \eqref{pb:weak_disc} and $(\vT^\bd,\vu,p)$ of  \eqref{pb:weak_cont} satisfies
	\begin{equation} \label{eqn:error_a_priori}
	\|\vu-\vu_h\|_V+\|\vT^\bd-\vT_h\|_M+\|p-p_h\|_Q \leq C\left[\inf_{\vv_h\in V_h}\|\vu-\vv_h\|_V+\inf_{\vS_h\in M_h}\|\vT^\bd-\vS_h\|_M+\inf_{q_h\in Q_h}\|p-q_h\|_Q\right].
	\end{equation}
\end{thm}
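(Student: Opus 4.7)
The plan is to derive error equations by subtracting the discrete equations from the continuous ones (noting that the continuous momentum equation can be rewritten with $d(\vu;\vu,\cdot)$ instead of $\int [(\vu\cdot\nabla)\vu]\cdot$ since $\vu\in\Vdiv$), and then to estimate the three errors in sequence: stress, then velocity, then pressure. Throughout I shall use a splitting $\vu - \vu_h = (\vu - \vu_h^I) + (\vu_h^I - \vu_h)$ with $\vu_h^I \in V_{h,0}$ controlled via \eqref{eq:bdd_vh0}, and similarly $\vT^\bd - \vT_h = (\vT^\bd - \vT_h^I) + (\vT_h^I - \vT_h)$ with $\vT_h^I \in M_h$ arbitrary.

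For the stress estimate, I would test the subtracted second equation with $\vS_h = \vT_h^I - \vT_h \in M_h$, then split the nonlinear term via $\pm \mu(|\vT_h^I|)\vT_h^I$. The piece $\int (\mu(|\vT_h^I|)\vT_h^I - \mu(|\vT_h|)\vT_h):(\vT_h^I-\vT_h)$ is nonnegative by the monotonicity \eqref{eqn:mu_mon} and may be dropped, while the remaining nonlinear difference is controlled by the Lipschitz hypothesis \eqref{eqn:mu_Lip}. Cauchy--Schwarz and division by $\|\vT_h^I - \vT_h\|_M$ then yield $\alpha\|\vT_h^I-\vT_h\|_M \le (\gamma\Lambda+\alpha)\|\vT^\bd-\vT_h^I\|_M + \|\vu-\vu_h\|_V$, and a triangle inequality plus infimum gives a bound of the form $\|\vT^\bd-\vT_h\|_M \le C\,\inf_{\vS_h}\|\vT^\bd-\vS_h\|_M + \tfrac{1}{\alpha}\|\vu-\vu_h\|_V$.

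For the velocity, the key trick is the compatibility assumption \eqref{eqn:eps_in_Mh}: since $\vD(\vv_h)\in M_h$ for every $\vv_h\in V_{h,0}$, I can test the stress error equation with $\vS_h = \vD(\vv_h)$ and substitute the resulting expression for $\int(\vT^\bd-\vT_h):\vD(\vv_h)$ into the momentum error equation. For $\vv_h\in V_{h,0}$ one has $b_2(\vv_h, p - p_h) = b_2(\vv_h, p - q_h)$ for any $q_h \in Q_h$. Testing with $\vv_h = E_u := \vu_h^I - \vu_h \in V_{h,0}$ produces an elliptic term $\frac{1}{\alpha}\|E_u\|_V^2$ on the left, while on the right the convective difference splits as $d(\vu-\vu_h;\vu,E_u) + d(\vu_h; \vu - \vu_h, E_u)$, whose $d(E_u;\vu,E_u)$ contribution is bounded by $\hat D\widetilde{C_\vu}\|E_u\|_V^2$ and whose remaining contributions involve $\|\eta_u\|_V$; antisymmetry $d(\vu_h;E_u,E_u)=0$ eliminates one term. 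The nonlinear residual is handled by Lipschitz continuity and the stress estimate from the previous step, contributing $\frac{\gamma\Lambda}{\alpha^2}\|E_u\|_V^2$ plus best-approximation terms. The smallness hypothesis \eqref{eq:smalldata} is exactly what is needed to ensure $\frac{1}{\alpha} - \hat D\widetilde{C_\vu} - \frac{\gamma\Lambda}{\alpha^2} \ge \frac{1-\theta}{\alpha} > 0$, allowing absorption and yielding $\|E_u\|_V$, and hence $\|\vu-\vu_h\|_V$, in terms of the three best-approximation quantities.

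For the pressure, I would apply the discrete inf-sup \eqref{infsup:b1h} to $q_h - p_h$ for arbitrary $q_h \in Q_h$: $\beta^*\|q_h - p_h\|_Q \le \sup_{\vv_h \in V_h}\frac{b_2(\vv_h, q_h-p_h)}{\|\vv_h\|_V}$. The full (not-restricted-to-$V_{h,0}$) momentum error equation expresses $b_2(\vv_h, p_h - p) = d(\vu;\vu,\vv_h) - d(\vu_h;\vu_h,\vv_h) + \int(\vT^\bd-\vT_h):\vD(\vv_h)$, which is bounded above via \eqref{bound:form_d} and the a priori bounds $\|\vu\|_V,\|\vu_h\|_V \le \widetilde{C_\vu}$, so that a triangle inequality combined with the velocity and stress bounds already obtained delivers \eqref{eqn:error_a_priori}. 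The main technical obstacle is the bookkeeping in the velocity step: one must carefully track that the coefficient obstructing the elliptic absorption is exactly $\frac{\gamma\Lambda}{\alpha} + \alpha\hat D\widetilde{C_\vu}$ (so that \eqref{eq:smalldata} applies), which requires feeding the stress-estimate bound $\|\vT^\bd-\vT_h\|_M \le C_1'\|\eta_T\|_M + \frac{1}{\alpha}(\|\eta_u\|_V + \|E_u\|_V)$ cleanly into the Lipschitz-controlled nonlinear residual.
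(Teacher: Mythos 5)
Your proposal is correct and follows essentially the same route as the paper's proof: the same Galerkin error equations, the monotonicity-plus-Lipschitz argument for the stress, the compatibility assumption \eqref{eqn:eps_in_Mh} with the choice $\vS_h=\vD(\vv_h)$ and antisymmetry of $d$ to close the velocity bound under \eqref{eq:smalldata}, the inf-sup condition for the pressure, and \eqref{eq:bdd_vh0} to pass from $V_{h,0}$ to $V_h$. The only differences (ordering the three estimates stress--velocity--pressure instead of pressure--stress--velocity, and dropping rather than adding the nonnegative monotone term) are cosmetic, and your coefficient $\frac{1}{\alpha}-\hat{D}\widetilde{C_{\vu}}-\frac{\gamma\Lambda}{\alpha^{2}}\geq\frac{1-\theta}{\alpha}$ is exactly the paper's absorption condition after rescaling by $\alpha$.
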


\begin{proof}
	Since we are using conforming finite element spaces, taking $(\vS,\vv,q)=(\vS_h,\vv_h,q_h)$ in \eqref{pb:weak_cont} and subtracting the equations of \eqref{pb:weak_disc} we easily get
    \begin{alignat}{2} \label{pb:weak_Galerkin}
	\hspace{-0.3cm}	d(\vu;\vu,\vv_h)-d(\vu_h;\vu_h,\vv_h)+\displaystyle{\int_{\Omega}}(\vT^\bd-\vT_h):\vD(\vv_h)+b_2(\vv_h,p-p_h) & =  0 &&\;\, \forall\, \vv_h\in V_h, \nonumber\\
	\hspace{-0.3cm}	 \alpha\displaystyle{\int_{\Omega}}(\vT^\bd-\vT_h):\vS_h+\gamma\displaystyle{\int_{\Omega}}(\mu(|\vT^\bd|)\vT^\bd-\mu(|\vT_h|)\vT_h):\vS_h & =  \displaystyle{\int_{\Omega}}\vD(\vu-\vu_h):\vS_h &&\;\, \forall\, \vS_h\in M_h, \\
	\hspace{-0.3cm}	b_2(\vu-\vu_h,q_h) & =  0 &&\;\, \forall\, q_h\in Q_h.\nonumber
	\end{alignat}
	The rest of the proof is divided into three steps.
	
	\textbf{Step 1} ({E}rror bound for the pressure). By the triangle inequality we have, for any $q_h \in Q_h$,
	\begin{equation*}
	\|p-p_h\|_Q\leq \|p-q_h\|_Q+\|q_h-p_h\|_Q,
	\end{equation*}
	and it therefore suffices to derive a bound on $\|q_h-p_h\|_Q$. From the (discrete) inf-sup condition we have
	\begin{equation*}
	\beta^*\|p_h-q_h\|_Q\leq \sup_{\vv_h\in V_h}\frac{b_2(\vv_h,p_h-q_h)}{\|\vv_h\|_V}.
	\end{equation*}
 	Again, using the first equation of \eqref{pb:weak_Galerkin} we have
	\begin{align*}
	b_2(\vv_h,p_h-q_h) & =  b_2(\vv_h,p_h-p)+b_2(\vv_h,p-q_h) \\
	& =  d(\vu;\vu,\vv_h)-d(\vu_h;\vu_h,\vv_h)+\int_{\Omega}(\vT^\bd-\vT_h):\vD(\vv_h)+b_2(\vv_h,p-q_h) \\
	& \leq  \left[(\hat{D}\|\vu\|_V+\hat{D}\|\vu_h\|_V)\|\vu-\vu_h\|_V+\|\vT^\bd-\vT_h\|_M+c_b\|p-q_h\|_Q\right]\|\vv_h\|_V \\
	& \leq  \left[2\hat{D} \widetilde{C_{\vu}} \|\vu-\vu_h\|_V+\|\vT^\bd-\vT_h\|_M+c_b\|p-q_h\|_Q\right]\|\vv_h\|_V,
	\end{align*}
	where we can take $c_b=C_K$ using the relation $\|\di(\vv)\|_{L^2(\Omega)}^2+\|\rot(\vv)\|_{L^2(\Omega)}^2=\| \nabla \vv
	\|_{L^2(\Omega)}^2$ that holds because we have homogeneous Dirichlet boundary conditions (otherwise take $c_b=\sqrt{d}C_K$). Thus, we obtain
	\begin{equation} \label{errest_step1_p}
	\|p-p_h\|_Q \leq \frac{2\hat{D} \widetilde{C_{\vu}}}{\beta^*}\|\vu-\vu_h\|_V+\frac{1}{\beta^*}\|\vT^\bd-\vT_h\|_M+\left(1+\frac{c_b}{\beta^*}\right)\|p-q_h\|_Q
	\end{equation}
	for any $q_h\in Q_h$.

	\textbf{Step 2} ({E}rror bound for the stress tensor). Again, we start with the triangle inequality; for any $\vS_h \in M_h$ we have that
	\begin{equation*}
	\|\vT^\bd-\vT_h\|_M \leq \|\vT^\bd-\vS_h\|_M+\|\vS_h-\vT_h\|_M,
	\end{equation*}
	and we then bound $\|\vT_h-\vS_h\|_M$. Thanks to the monotonicity property \eqref{eqn:mu_mon} and the second equation of \eqref{pb:weak_Galerkin}, we have
	\begin{align*}
	\alpha\|\vT_h-\vS_h\|_M^2 & \leq  \alpha\int_{\Omega}|\vT_h-\vS_h|^2+\gamma\int_{\Omega}(\mu(|\vT_h|)\vT_h-\mu(|\vS_h|)\vS_h):(\vT_h-\vS_h) \\
	&   \hspace*{-2.1cm}=\alpha\int_{\Omega}(\vT_h-\vT^\bd+\vT^\bd-\vS_h):(\vT_h-\vS_h)+\gamma\int_{\Omega}(\mu(|\vT_h|)\vT_h-\mu(|\vS_h|)\vS_h):(\vT_h-\vS_h) \\
	&   \hspace*{-2.1cm}=\int_{\Omega}\vD(\vu_h-\vu):(\vT_h-\vS_h)+\gamma\int_{\Omega}(\mu(|\vT^\bd|)\vT^\bd-\mu(|\vS_h|)\vS_h):(\vT_h-\vS_h)+\alpha\int_{\Omega}(\vT^\bd-\vS_h):(\vT_h-\vS_h) \\
	&  \hspace*{-2.1cm} \leq  \left[\|\vu-\vu_h\|_V+\alpha\|\vT^\bd-\vS_h\|_M+\gamma\Lambda\|\vT^\bd-\vS_h\|_M\right]  \|\vT_h-\vS_h\|_M,
	\end{align*}
	and thus
	\begin{equation} \label{errest_step2_Td}
	\|\vT^\bd-\vT_h\|_M \leq \frac{1}{\alpha}\|\vu-\vu_h\|_V+\left(2+\frac{\gamma\Lambda}{\alpha}\right)\|\vT^\bd-\vS_h\|_M
	\end{equation}
	for any $\vS_h\in M_h$.
	
	\textbf{Step 3} ({E}rror bound for the velocity). Recalling the definition of $V_{h,0}$ in \eqref{def:Vh0}, let $\vv_{h,0}\in V_{h,0}$ and let $\vv_h:=\vv_{h,0}-\vu_h\in V_{h,0}$. We will first show the relation \eqref{eqn:error_a_priori} by taking the infimum over $V_{h,0}$ instead of $V_h$. As before, we use the triangle inequality to get
	\begin{equation*}
	\|\vu-\vu_h\|_V \leq \|\vu-\vv_{h,0}\|_V + \|\vv_{h,0}-\vu_h\|_V.
	\end{equation*}
	Thanks to the assumption \eqref{eqn:eps_in_Mh}, we can take $\vS_h=\vD(\vv_h)$ in the second equation of \eqref{pb:weak_Galerkin} yielding
	\begin{equation*}
	\int_{\Omega}\vD(\vu-\vu_h):\vD(\vv_h) = \alpha\int_{\Omega}(\vT^\bd-\vT_h):\vD(\vv_h)+\gamma\int_{\Omega}\big(\mu(|\vT^\bd|)\vT^\bd-\mu(|\vT_h|)\vT_h\big):\vD(\vv_h).
	\end{equation*}
	Using the first equation of \eqref{pb:weak_Galerkin}, we can easily derive the equality
	\begin{align*}
	\| \vv_{h,0}-\vu_h \|_{V}^2 =\int_{\Omega}\vD(\vv_{h,0}-\vu_h):\vD(\vv_h) & =  \int_{\Omega}\vD(\vv_{h,0}-\vu):\vD(\vv_h)-\alpha(d(\vu;\vu,\vv_h)-d(\vu_h;\vu_h,\vv_h)) \\
	&\quad  
	-\alpha b_2(\vv_h,p-q_h) +\gamma\int_{\Omega}(\mu(|\vT^\bd|)\vT^\bd-\mu(|\vT_h|)\vT_h):\vD(\vv_h),
	\end{align*}
	thanks to the fact that $b_2(\vv_h,q_h-p_h) = 0$. To bound the convective term, we use
	\begin{align*}
	d(\vu;\vu,\vv_h)-d(\vu_h;\vu_h,\vv_h) & =  d(\vu-\vu_h;\vu,\vv_h)+d(\vu_h;\vu-\vu_h,\vv_h) \\
	& =  d(\vu-\vv_{h,0};\vu,\vv_h)+d(\vv_{h,0}-\vu_h;\vu,\vv_h) \\
	&  \quad +d(\vu_h;\vu-\vv_{h,0},\vv_h)+\underbrace{d(\vu_h;\vv_{h,0}-\vu_h,\vv_h)}_{=0} \\
	&  \hspace*{-4cm} \leq \left[\hat{D}\|\vu\|_V\|\vu-\vv_{h,0}\|_V+\hat{D}\|\vu\|_V\|\vv_{h,0}-\vu_h\|_V+\hat{D}\|\vu_h\|_V\|\vu-\vv_{h,0}\|_V\right]\|\vv_{h,0}-\vu_h\|_V \\
	&  \hspace*{-4cm}  \leq \left[2\hat{D} \widetilde{C_{\vu}} \|\vu-\vv_{h,0}\|_V+\hat{D} \widetilde{C_{\vu}}\|\vv_{h,0}- \vu_h \|_V\right]\|\vv_{h,0}-\vu_h\|_V,
	\end{align*}
	from which we get
	\begin{align*}
	\|\vv_{h,0}-\vu_h\|_V & \leq  \|\vu-\vv_{h,0}\|_V+2\alpha\hat{D} \widetilde{C_{\vu}}\|\vu-\vv_{h,0}\|_V \\
	& \quad +\alpha c_b\|p-q_h\|_Q+\gamma\Lambda\|\vT^\bd-\vT_h\|_M+\alpha\hat{D} \widetilde{C_{\vu}}\|\vv_{h,0}-\vu_h\|_V.
	\end{align*}
	Now using \eqref{errest_step2_Td} we arrive at
	\begin{align*}
	\|\vv_{h,0}-\vu_h\|_V & \leq  (1+2\alpha\hat{D} \widetilde{C_{\vu}} )\|\vu-\vv_{h,0}\|_V+\alpha c_b\|p-q_h\|_Q+\frac{\gamma\Lambda}{\alpha}\|\vu-\vu_h\|_V \\
	& \quad +  \gamma\Lambda\left(2+\frac{\gamma\Lambda}{\alpha}\right)\|\vT^\bd- \vS_h  \|_M+\alpha\hat{D} \widetilde{C_{\vu}} \|\vv_{h,0}-\vu_h\|_V \\
	& \leq  \left(1+2\alpha\hat{D} \widetilde{C_{\vu}} +\frac{\gamma\Lambda}{\alpha}\right)\|\vu-\vv_{h,0}\|_V+\gamma\Lambda\left(2+\frac{\gamma\Lambda}{\alpha}\right)\|\vT^d- \vS_h \|_M \\
	& \quad +\alpha c_b\|p-q_h\|_Q+\left(\frac{\gamma\Lambda}{\alpha}+\alpha\hat{D} \widetilde{C_{\vu}}\right)\| \vv_{h,0}-\vu_h\|_V.
	\end{align*}
	Therefore, using the assumption \eqref{eq:smalldata} on the input data, we obtain
	\begin{equation*}
	\|\vv_{h,0}-\vu_h\|_V \leq \frac{1}{1-\theta}\left[\left(1+2\alpha\hat{D} \widetilde{C_{\vu}} +\frac{\gamma\Lambda}{\alpha}\right)\|\vu-\vv_{h,0}\|_V+\gamma\Lambda\left(2+\frac{\gamma\Lambda}{\alpha}\right)\|\vT^\bd- \vS_h \|_M+\alpha c_b\|p-q_h\|_Q\right]
	\end{equation*}
	and thus
	\begin{equation} \label{errest_step3_u}
	\|\vu-\vu_h\|_V \leq \left(1+\frac{1+2\alpha\hat{D}\widetilde{C_{\vu}}+\frac{\gamma\Lambda}{\alpha}}{1-\theta}\right)\|\vu-\vv_{h,0}\|_V+\frac{\gamma\Lambda\left(2+\frac{\gamma\Lambda}{\alpha}\right)}{1-\theta}\|\vT^\bd- \vS_h \|_M+\frac{\alpha c_b}{1-\theta}\|p-q_h\|_Q
	\end{equation}
	for any $\vv_{h,0}\in V_{h,0}$. Finally, combining \eqref{errest_step1_p}, \eqref{errest_step2_Td} and \eqref{errest_step3_u} we obtain
	\begin{equation*}
	\|\vu-\vu_h\|_V+\|\vT^\bd-\vT_h\|_M+\|p-p_h\|_Q \leq c_1\inf_{\vv_{h,0}\in V_{h,0}}\|\vu-\vv_{h,0}\|_V+c_2\inf_{\vS_h\in M_h}\|\vT^\bd-\vS_h\|_M+c_3\inf_{q_h\in Q_h}\|p-q_h\|_Q
	\end{equation*}
	with
	\begin{align*}
	c_1 & :=  \left[1+\frac{2\hat{D} \widetilde{C_{\vu}} }{\beta^*}+\frac{1}{\alpha}\left(1+\frac{1}{\beta^*}\right)\right]\left(1+\frac{1+2\alpha\hat{D} \widetilde{C_{\vu}} +\frac{\gamma\Lambda}{\alpha}}{1-\theta}\right), \\
	c_2 & :=  \left(1+\frac{1}{\beta^*}\right)\left(2+\frac{\gamma\Lambda}{\alpha}\right)+\left[1+\frac{2\hat{D} \widetilde{C_{\vu}} }{\beta^*}+\frac{1}{\alpha}\left(1+\frac{1}{\beta^*}\right)\right]\frac{\gamma\Lambda\left(2+\frac{\gamma\Lambda}{\alpha}\right)}{1-\theta} ,\\
	c_3 & :=  \left(1+\frac{c_b}{\beta^*}\right)+\left[1+\frac{2\hat{D} \widetilde{C_{\vu}} }{\beta^*}+\frac{1}{\alpha}\left(1+\frac{1}{\beta^*}\right)\right]\left(\frac{\alpha c_b}{1-\theta}\right).
	\end{align*}
	We can then conclude the proof using \eqref{eq:bdd_vh0}.
\end{proof}

	
\subsection{Examples of conforming approximation} \label{subsec:Ex_conf_app}

From now on, we assume that the boundary of the Lipschitz domain $\Omega \subset \mathbb{R}^d$ is a polygonal line (when $d=2$) or a polyhedral surface (when $d=3$), so that it can be exactly meshed. For each $h$, let $\Th$ be a conforming mesh on $\overline\Omega$ consisting of elements $E$, triangles or quadrilaterals in two dimensions, tetrahedra or hexahedra (all planar-faced) in three dimensions, conforming in the sense that the mesh has no hanging nodes. As usual, the diameter of $E$ is denoted by $h_E$,
$$h = \sup_{E \in \Th} h_E, $$
and $\varrho_E$ is the diameter of the  largest ball inscribed in $E$.


\subsubsection{The simplicial case}

In the case of simplices, the family of meshes $\Th$ is assumed to be {\em regular} in the sense of Ciarlet~\cite{Cia}: i.e., it is assumed that there exists a constant $\sigma >0$, independent of $h$, such that
\begin{equation} \label{eq:reg_mesh}
\frac{h_E}{\varrho_E} \le \sigma\quad \forall\, E \in \Th.
\end{equation}
This condition guarantees that there is an invertible affine mapping $\mathcal F_E$ that maps the unit reference simplex  onto $E$.

For any integer $k \ge 0$, let $\polP_k$ denote the space of polynomials in $d$ variables of degree at most $k$. In each element $E$, the functions will be approximated in the spaces $\polP_k$. The specific choice of finite element spaces is dictated by two considerations. First, conditions \eqref{infsup:b1h} and \eqref{eqn:eps_in_Mh} must be satisfied. Next, since the number of unknowns in \eqref{pb:weak_disc} is large, the degree $k$ of the finite element functions should be small. It is well-known that the lowest degree of conforming approximation of $(\vu,p)$ satisfying \eqref{infsup:b1h}, without modification of the bilinear forms, is the Taylor-Hood $\polP_2^d$--$\polP_1$ element, see~\cite{ref:GiR,ref:Boffi97}, provided each element has at least one interior vertex. In view of \eqref{eqn:eps_in_Mh}, this implies that $\vT^{\bd}$ is approximated by $\polP_1^{d\times d}$.
Thus the corresponding finite element spaces are
\begin{align*}
V_h  &:=\{ \vv_h \in H^1_0(\Omega)^d: \quad \restriction{\vv_h}{E}  \in \polP_2^d \quad \forall\, E \in \Th\},
\\
Q_h  &:=\{ q_h \in H^1(\Omega)\cap L^2_0(\Omega): \quad \restriction{q_h}{E}  \in \polP_1 \quad \forall\, E \in \Th\},
\\
M_h  &:=\{ \vS_h \in L^2(\Omega)_{{ \rm sym}}^{d\times d}: \quad \restriction{\vS_h}{E}  \in (\polP_1)_{{ \rm sym}}^{d\times d} \quad \forall\, E \in \Th\}.
\end{align*}
It is easy to check that with these spaces on a simplicial mesh, under condition \eqref{eq:reg_mesh},  problem \eqref{pb:weak_disc} has at least one solution. Furthermore, if the data satisfy \eqref{eq:smalldata}, then
Theorem \ref{thm:apriorierror} yields
\begin{equation} \label{eqn:errorTH2}
\| \vD( \vu-\vu_h)\|_{ L^2(\Omega)} +\|\vT^\bd-\vT_h\|_{ L^2(\Omega)}+\|p-p_h\|_{ L^2(\Omega)} \leq C\,h^2,
\end{equation}
provided that the solution is sufficiently smooth, namely  $\vu \in H^3(\Omega)^d\cap H^1_0(\Omega)^d$,  $\vT^{\bd} \in H^2(\Omega)^{d\times d}$, and $p \in H^2(\Omega)\cap L^2_0(\Omega)$. Therefore the scheme has order two for an optimal number of degrees of freedom, i.e., this order of convergence cannot be achieved with fewer degrees of freedom.


\subsubsection{The quadrilateral/hexahedral case} \label{ss:quad}

The notion of regularity is more complex for quadrilateral and much more complex for hexahedral elements. In the case of quadrilaterals~\cite{ref:GiR},  the family of  meshes is regular if the elements are convex and, moreover, the subtriangles associated to each vertex (there is one per vertex) all satisfy \eqref{eq:reg_mesh}.  In the case of hexahedra with plane faces, convexity and the validity of \eqref{eq:reg_mesh} for the subtetrahedra associated to each vertex are necessary but not sufficient. This difficulty has been investigated by many authors, see for instance~\cite{ref:Zhang2005,ref:Knabner2003}; the most relevant publication concerning hexahedra with plane faces is however~\cite{ref:Johnen17}, where the minimum of the Jacobian in the reference cube  $\hat E$  is bounded below by the minimum of the coefficients of its B\'ezier expansion and this minimum is determined by an efficient algorithm. The details of this are beyond the scope of this work, and we shall simply assume here that the minimum of these B\'ezier coefficients is strictly positive and that furthermore, denoting by ${\mathcal J}_E$ the Jacobian determinant of $\mathcal F_E$,
\begin{equation} \label{eq:reg_hexa}
{\mathcal J}_E(\hat \vx) \ge \hat c \varrho_E^3\quad \forall\,\hat \vx \in \hat E
\end{equation}
with a constant $\hat c$ independent of $E$ and $h$.
If these conditions hold, there is an invertible bi-affine mapping $\mathcal F_E$ in two dimensions or tri-affine in three dimensions that maps the unit reference square or cube  onto $E$.

We let $\polQ_k$ be the space of polynomials in $d$ variables of degree at most $k$ in {\em each} variable. In contrast to the case of simplicial meshes, the space $\polQ_k$ is not invariant under the composition with $\mathcal F_E$, which makes the compatibility condition \eqref{eqn:eps_in_Mh} between $\vD(V_h)$ and $M_h$ problematic. 
To circumvent this issue, we restrict ourselves to affine maps $\mathcal F_E$, thereby allowing subdivisions consisting of parallelograms/parallelepipeds. In addition, the situation is less satisfactory when a quadrilateral or hexahedral mesh is used, because although the Taylor-Hood $\polQ_2^d$--$\polQ_1$ element satisfies \eqref{infsup:b1h}, the second condition \eqref{eqn:eps_in_Mh} does not hold if $\vT^{\bd}$ is approximated by $ {\polQ}_1^{d\times d}$ since the components of the gradient of $\polQ_2$ functions belong to a space, intermediate between  $\polQ_2$ and $\polQ_1 $, that is strictly larger than both $\polQ_1$ and $\polP_2$. Therefore, in order to satisfy \eqref{eqn:eps_in_Mh}, the simplest option is to discretise each component of $\vT^{\bd}$ by ${\polQ}_2$. The corresponding finite element spaces are
\begin{align*}
V_h  &:=\{ \vv_h \in H^1_0(\Omega)^d: \quad \restriction{\vv_h}{E}  \in \polQ_2^d \quad \forall\, E \in \Th\},
\\
Q_h  &:=\{ q_h \in H^1(\Omega)\cap L^2_0(\Omega): \quad \restriction{q_h}{E}  \in \polQ_1 \quad \forall\, E \in \Th\},
\\
M_h  &:=\{ \vS_h \in L^2(\Omega)_{{ \rm sym}}^{d\times d}: \quad \restriction{\vS_h}{E}  \in (\polQ_2)_{{ \rm sym}}^{d\times d} \quad \forall\, E \in \Th\}.
\end{align*}
With these spaces and under the above regularity conditions, problem \eqref{pb:weak_disc}  has at least one solution and the error estimate \eqref{eqn:errorTH2} holds if the data satisfy \eqref{eq:smalldata}. However, this triple of spaces is no longer optimal, because the degree two approximation of $\vT^{\bd}$ now requires far too many degrees of freedom with no gain in accuracy. For instance, when $d=3$, its approximation by $({\polQ}_2)_{{ \rm sym}}^{3\times 3}$ requires $27 \times 6 =162$ unknowns inside each element instead of $8\times 6 =48$ unknowns for $({\polQ}_1)_{{ \rm sym}}^{3\times 3}$. 

The nonconforming finite element approximations discussed in Section~\ref{sec:NS_ENL_AppNC} do not require an affine mapping $\mathcal F_E$ and, by considering $\polP$-type approximations on the physical element $E$, do not suffer from the computational cost overhead mentioned above.

\section{ Nonconforming finite element approximation} \label{sec:NS_ENL_AppNC}

The nonconforming approximations developed here will not only allow the use of elements of degree one for $\vu$, but will also lead to locally mass-conserving schemes. Because of the discontinuity of the finite element  functions, the proofs are in some cases more complex; this is true in particular for the proof of the inf-sup condition for the discrete divergence.


\subsection{The quadrilateral/planar-faced hexahedral case} \label{subsec:quads}

Here we consider quadrilateral/hexahedral grids $\Th$ with planar faces, satisfying the regularity assumptions stated in Section \ref{subsec:Ex_conf_app}. There is a wide choice of possible approximations with nonconforming finite elements. Here we  propose   globally discontinuous velocities in $\polP^d_{ k}$, ${ k}\ge 1$, in each cell associated with globally discontinuous pressures and stresses both of degree at most $k-1$. Thus we consider $V_h \subset L^2(\Omega)^d$, $Q_h \subset L^2_0(\Omega)$ and $M_h \subset L^2(\Omega)_{{\rm sym}}^{d\times d}$ defined by
\begin{align}
\label{eq:Vhnon-k}
V_h  &:=\{ \vv_h \in L^2(\Omega)^d: \quad \restriction{\vv_h}{E}  \in \polP^d_{ k }\quad \forall\,E\in \Th\},
\\
\label{eq:Qhnon-k}
Q_h  &:=\{ q_h \in L^2_0(\Omega): \quad \restriction{q_h}{E} \in \polP_{{ k }-1} \quad \forall\, E \in \Th\},
\\
\label{eq:Mhnon-k}
M_h  &:=\{ \vS_h \in L^2(\Omega)_{{\rm sym}}^{d\times d}: \quad \restriction{\vS_h}{E}  \in (\polP_{{ k }-1})_{{\rm sym}}^{d\times d} \quad \forall\, E\in \Th\}.
\end{align}
As usual, the  full nonconformity of $V_h$ is compensated by adding to the forms consistent jumps and averages on edges  when $d=2$ or faces  when $d=3$; see for instance~\cite{Riviere2008}. Let $\Gamma_h  = \Gamma_h^i\cup\Gamma_h^b$ denote the set of all edges when $d=2$ or all faces when $d=3$ with $\Gamma_h^i$ and $\Gamma_h^b$ signifying the set of all interior and the set of all boundary edges ($d=2$) or faces ($d=3$), respectively. A unit normal vector $\vn_e$ is attributed to each $e \in \Gamma_h$; its direction can be freely chosen. Here, the  following rule is applied: if $e\in \Gamma_h^b$, then $\vn_e = \vn_\Omega$, the exterior unit normal to $\Omega$;
if $e \in\Gamma_h^i$, then $\vn_e$ points from  $E_i$ to $E_j$, where $E_i$ and $E_j$ are the two elements of $\Th$ adjacent to $e$ and the number $i$ of $E_i$ is smaller than that of $E_j$.
The jumps and averages of any function $f$ on $e$ (smooth enough to have a trace)  are defined by
$$[f(x)]_e := \restriction{f(x)}{E_i} - \restriction{f(x)}{E_j},\quad \mbox{when}\ \vn_e\  \mbox{points from}\   E_i\  \mbox{to}\  E_j,
$$
$$\{f(x)\}_e := \frac{1}{2} \big(\restriction{f(x)}{E_i} + \restriction{f(x)}{E_j}\big).
$$
When $e\in\Gamma_h^b$, the jump and average are defined to coincide with the trace on $e$.

The terms involving jumps and averages that are added to each form are not unique; here we make the following fairly standard choice:
\begin{equation} \label{eq:epshnon}
\int_\Omega \vS :\vD(\vv) \simeq b_{1h}(\vS_h,\vv_h) := \sum_{E \in \Th} \int_E \vS_h:\vD(\vv_h) - \sum_{e \in \Gamma_h} \int_e  \{\vS_h\}_e\vn_e \cdot [\vv_h]_e.
\end{equation}
The trilinear form $d$ is approximated by a centred discretisation, as follows:
\begin{equation} \label{eq:dhnon}
\begin{split}
d_h(\vu_h;\vv_h,\vw_h)  : =& \sum_{E \in \Th} \int_E  \left[ (\vu_h\cdot\nabla)\vv_h  \right]\cdot \vw_h + \frac{1}{2}\sum_{E \in \Th} \int_E \di(\vu_h) (\vv_h \cdot \vw_h)\\
& - \frac{1}{2}\sum_{e \in \Gamma_h}\int_e [\vu_h]_e\cdot \vn_e \{\vv_h \cdot \vw_h\}_e -  \sum_{e \in \Gamma_h^i} \int_e\{\vu_h\}_e\cdot \vn_e [\vv_h]_e\cdot\{ \vw_h\}_e.
\end{split}
\end{equation}
The divergence form $b_2$ is approximated by
\begin{equation} \label{eq:b2hnon}
b_{2h}(\vv_h,q_h)  := -\sum_{E \in \Th} \int_E q_h \di(\vv_h)+ \sum_{e \in \Gamma_h}\int_e [\vv_h]_e\cdot \vn_e \{q_h\}_e.
\end{equation}
Clearly, the jump terms in \eqref{eq:epshnon} and \eqref{eq:b2hnon} vanish when $\vv_h$ belongs to $H^1_0(\Omega)^d$. Likewise, the jump and divergence terms in \eqref{eq:dhnon} vanish when $\vu_h$ and $\vv_h$ belong to $H^1_0(\Omega)^d$ and $\di(\vu_h) = 0$. Moreover, \eqref{eq:dhnon} is constructed so that $d_h$ is antisymmetric,
\begin{equation} \label{eq:antisym-non}
d_h(\vu_h;\vv_h,\vw_h) = - d_h(\vu_h;\vw_h,\vv_h) \quad \forall\, \vu_h,\vv_h,\vw_h \in V_h.
\end{equation}
Finally, the following positive definite form acts as a penalty to compensate the nonconformity of $\vu_h$:
\begin{equation} \label{eq:Jh}
J_h(\vu_h,\vv_h)  := \sum_{e \in \Gamma_h}\frac{\sigma_e}{h_e} \int_e [\vu_h]_e \cdot [\vv_h]_e,
\end{equation}
where $h_e$ is the average of the diameter of the two elements adjacent to $e$, if $e \in\Gamma_h^i$, or the diameter of the element adjacent to $e$ otherwise. The parameters $\sigma_e > 0$ will be chosen below to guarantee stability of the scheme,  see \eqref{eqn:sigma.e} and \eqref{eqn:bdd.C1}.
This form is also used to define the norm on $V_h$ by
\begin{equation} \label{eq:normVh}
\|\vv_h\|_{V_h}:= \Big(\|\vD(\vv_h)\|^2_h + J_h(\vv_h,\vv_h) \Big)^{\frac{1}{2}},
\end{equation}
where
\begin{equation} \label{eq:seminormVh}
\|\vD(\vv_h)\|_h:= \Big(\sum_{E \in \Th} \|\vD(\vv_h)\|^2_{L^2(E)}\Big)^{\frac{1}{2}}
\end{equation}
denotes the associated semi-norm. Also, in view of \eqref{eq:b2hnon}, we define the space of discretely divergence-free functions,
\begin{equation} \label{eq:V0hnon.conf}
V_{h,0} := \{ \vv_h \in V_h: \quad b_{2h}(\vv_h,q_h) = 0 \quad \forall\, q_h\in Q_h\}.
\end{equation}

The discrete scheme reads: find $(\vT_h,\vu_h,p_h) \in  M_h \times  V_h \times Q_h$ solution of
\begin{alignat}{2} \label{pb:nonconfscheme}
\begin{aligned}
d_h(\vu_h;\vu_h,\vv_h) + b_{1h}(\vT_h,\vv_h) + b_{2h}(\vv_h,p_h)+ J_h(\vu_h,\vv_h) & =  \int_{\Omega}\vf\cdot\vv_h &&\quad \forall\, \vv_h\in V_h, \\
\alpha\int_{\Omega}\vT_h:\vS_h+\gamma\int_{\Omega}\mu(|\vT_h|)\vT_h:\vS_h & =  b_{1h}(\vS_h,\vu_h) &&\quad \forall\, \vS_h\in M_h, \\
b_{2h}(\vu_h,q_h) & =  0 &&\quad \forall\, q_h\in Q_h.
\end{aligned}
\end{alignat}
As expected, $b_{2h}(\vv_h,1) = 0$, and therefore the system \eqref{pb:nonconfscheme} is unchanged when the zero mean value constraint is  lifted from the functions of $Q_h$.

\subsubsection{Properties of the norm and forms} \label{subsec:Nonconf-forms}

All constants below depend on the regularity of the mesh but are independent of $h$. In particular, we shall use $C$ to denote such generic constant independent of $h$.
In addition, we shall use the following ``edge to interior" inequality. There exists a constant $\hat C$, depending only on the dimension $d$ and the degree of the polynomials, such that for all $\vv_h \in V_h$, all $e \in \Gamma_h$ and any element $E$, adjacent to $e$,
\begin{equation} \label{eq:e.to.E}
\|\vv_h\|_{L^2(e)} \le \hat C \bigg(\frac{|e|}{|E|}\bigg)^{\frac{1}{2}} \|\vv_h\|_{L^2(E)}.
\end{equation}
It is easy to check that \eqref{eq:normVh} defines a norm on $V_h$. Next, the results in~\cite{ref:SB03,ref:SB04} yield the following consequences of a discrete Korn inequality:
\begin{equation} \label{eq:disc-Korn}
\|\vv_h\|_{L^2(\Omega)} \le C \|\vv_h\|_{V_h} \quad \forall\, \vv_h \in V_h,
\end{equation}
and
\begin{equation} \label{eq:disc-Kornbis}
\|\nabla_h \vv_h\|_{L^2(\Omega)} \le C \|\vv_h\|_{V_h} \quad \forall\, \vv_h \in V_h,
\end{equation}
where $\nabla_h\vv_h$ is the broken gradient (i.e., the local gradient in each element).
Moreover, by following the work  in~\cite{girRivLi2016-2,ref:lasisSulli03,buffa-ortner}, this can be generalised for all finite $p\ge  1$ when $d=2$ and all $p \in [1,6]$ when $d=3$, to
\begin{equation} \label{eq:Lpbd}
\|\vv_h\|_{L^p(\Omega)} \le C(p) \|\vv_h\|_{V_h} \quad \forall\, \vv_h \in V_h.
\end{equation}
With this norm, the following compactness result holds for any sequence $\vv_h$ in $V_h$, see~\cite{buffa-ortner,girRivLi2016-2,ref:BartelsJensenMuller}: if there exists a constant $C$ independent of $h$ such that
$$\|\vv_h\|_{V_h} \le C,$$
then there exists a function $\bar \vv \in H^1_0(\Omega)^d$ such that for all finite $p\ge 1$ when $d=2$ and all $p \in [1,6)$ when $d=3$,
\begin{equation} \label{eq:strng.lim}
\lim_{h \to 0} \|\vv_h - \bar \vv\|_{L^p(\Omega)} = 0.
\end{equation}

Regarding the forms, a straightforward finite-dimensional argument shows that, for all $\vu_h,\vv_h,\vw_h \in V_h$,
\begin{equation} \label{eq:bdd.dh3}
\Big| \sum_{e \in \Gamma_h}\int_e [\vu_h]_e\cdot \vn_e \{\vv_h \cdot \vw_h\}_e\Big| \le C \big(J_h(\vu_h,\vu_h)\big)^{\frac{1}{2} } \|\vv_h\|_{L^4(\Omega)} \|\vw_h\|_{L^4(\Omega)},
\end{equation}
\begin{equation} \label{eq:bdd.dh4}
\Big|\sum_{e \in \Gamma_h^i} \int_e\{\vu_h\}_e\cdot \vn_e [\vv_h]_e \cdot \{\vw_h\}_e \Big| \le  C \|\vu_h\|_{L^4(\Omega)}\big(J_h(\vv_h,\vv_h)\big)^{\frac{1}{2}}\|\vw_h\|_{L^4(\Omega)}.
\end{equation}
Hence we have, for all $\vu_h,\vv_h,\vw_h \in V_h$,
\begin{align} \label{eq:bdd.dh}
\begin{aligned}
&\big|d_h(\vu_h;\vv_h,\vw_h)\big| \le
C \|\vu_h\|_{L^4(\Omega)}\big(J_h(\vv_h,\vv_h)\big)^{\frac{1}{2}}  \|\vw_h\|_{L^4(\Omega)}\\
&+\Big[\|\nabla_h\vv_h\|_{L^2(\Omega)} \|\vu_h\|_{L^4(\Omega)}  + \frac{1}{2} \big(\big(\sum_{E \in \Th} \|\di(\vu_h)\|^2_{L^2(E)}\big)^{\frac{1}{2} } +  C \big(J_h(\vu_h,\vu_h)\big)^{\frac{1}{2} }\Big) \|\vv_h\|_{L^4(\Omega)}\Big] \|\vw_h\|_{L^4(\Omega)}.
\end{aligned}
\end{align}
Similarly,
\begin{alignat}{2}
\label{eq:bdd.b2h}
\Big|b_{2h}(\vv_h,q_h)\Big|  &\le  \Big(\big(\sum_{E \in \Th} \|\di(\vv_h)\|^2_{L^2(E)}\big)^{\frac{1}{2} } + C  \big(J_h(\vv_h,\vv_h)\big)^{\frac{1}{2} } \Big)\|q_h\|_{L^2(\Omega)} &&\quad \forall\, \vv_h \in V_h, q_h \in Q_h,
\\
\label{eq:bdd.ah}
\Big|b_{1h}(\vS_h,\vv_h)\Big|  &\le  \Big(\|\vD(\vv_h)\|_h + C \big(J_h(\vv_h,\vv_h)\big)^{\frac{1}{2} }\Big) \|\vS_h\|_{L^2(\Omega)} && \quad \forall\, \vv_h \in V_h, \vS_h \in M_h.
\end{alignat}
Finally, the inequality below is used in choosing  $\sigma_e$. Its proof is fairly straightforward, but it is included here for the reader's convenience.

\begin{prop} \label{prop:balance}
	For any $\vu_h \in V_h$, any choice of $\sigma_e >0$ and any real number $\delta >0$, we have
	\begin{equation} \label{eqn:balance1}
	\Big|\sum_{e \in \Gamma_h} \int_e  \{\vD(\vu_h)\}_e\vn_e \cdot [\vu_h]_e \Big| \le \frac{1}{2} \Big( \frac{1}{ \delta} J_h(\vu_h,\vu_h) + \delta \frac{C_h}{{\rm min}_{e \in \Gamma_h}\sigma_e} \|\vD(\vu_h)\|_h^2\Big),
	\end{equation}
	where
	\begin{equation} \label{eqn:bdd.C1}
 	C_h:= 2d\, \hat C^2 \max\left(\max_{e \in \Gamma_h^i} \bigg(h_e \max_{j=1,2}\frac{|e|}{|E_j|}\bigg), \max_{e \in \Gamma_h^b}\left(h_e\frac{|e|}{|E|}\right)\right),
	\end{equation}
	$E_1$ and $E_2$ are the elements that share the face $e\in \Gamma_h^i$, $E$ is the element that has face $e \in \Gamma_h^b$, and $\hat C$ is the constant appearing in inequality \eqref{eq:e.to.E} solely depending on $d$ and the polynomial degree.
\end{prop}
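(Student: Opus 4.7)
The plan is to separate the integrand into a piece that matches the jump penalty $J_h$ and a piece that matches the broken symmetric-gradient seminorm $\|\vD(\cdot)\|_h$, then apply Cauchy--Schwarz and Young's inequality with parameter $\delta$, and finally use the edge-to-interior inequality \eqref{eq:e.to.E} to pass from face norms of $\vD(\vu_h)$ to element norms.

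More concretely, I would write, for each $e \in \Gamma_h$,
\[
\int_e \{\vD(\vu_h)\}_e\vn_e \cdot [\vu_h]_e = \int_e \bigg(\sqrt{\tfrac{\sigma_e}{h_e}}\,[\vu_h]_e\bigg)\cdot\bigg(\sqrt{\tfrac{h_e}{\sigma_e}}\,\{\vD(\vu_h)\}_e\vn_e\bigg),
\]
and apply Cauchy--Schwarz on $e$ and then on the sum over $e$ to obtain
\[
\Big|\sum_{e \in \Gamma_h} \int_e \{\vD(\vu_h)\}_e\vn_e \cdot [\vu_h]_e\Big| \le \big(J_h(\vu_h,\vu_h)\big)^{\frac{1}{2}} \bigg(\sum_{e\in\Gamma_h}\frac{h_e}{\sigma_e}\|\{\vD(\vu_h)\}_e\|_{L^2(e)}^2\bigg)^{\frac{1}{2}}.
\]
Young's inequality with parameter $\delta$ splits this into $\frac{1}{2\delta} J_h(\vu_h,\vu_h)$ plus $\frac{\delta}{2}$ times the second factor squared; it therefore remains to show that
\[
\sum_{e\in\Gamma_h}\frac{h_e}{\sigma_e}\|\{\vD(\vu_h)\}_e\|_{L^2(e)}^2 \le \frac{C_h}{\min_{e\in\Gamma_h}\sigma_e}\,\|\vD(\vu_h)\|_h^2.
\]

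To prove this last inequality, I would for $e \in \Gamma_h^i$ shared by $E_1,E_2$ use $\|\{\vD(\vu_h)\}_e\|^2_{L^2(e)} \le \tfrac{1}{2}(\|\restriction{\vD(\vu_h)}{E_1}\|^2_{L^2(e)}+\|\restriction{\vD(\vu_h)}{E_2}\|^2_{L^2(e)})$ and for $e \in \Gamma_h^b$ with adjacent element $E$ the trivial identity $\{\vD(\vu_h)\}_e=\restriction{\vD(\vu_h)}{E}$. Applying \eqref{eq:e.to.E} componentwise to the polynomial tensor $\vD(\vu_h)$ gives $\|\restriction{\vD(\vu_h)}{E}\|^2_{L^2(e)}\le \hat C^2\tfrac{|e|}{|E|}\|\vD(\vu_h)\|^2_{L^2(E)}$. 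Pulling out $1/\min_e\sigma_e$ and the maxima defining $C_h$, the remaining sum is $\sum_{e}\|\vD(\vu_h)\|_{L^2(E^{\pm})}^2$, in which every element of $\Th$ appears at most $2d$ times (once per face). This produces the factor $2d$ in \eqref{eqn:bdd.C1} and bounds the sum by $\tfrac{C_h}{\min_e\sigma_e}\|\vD(\vu_h)\|_h^2$, yielding the claim.

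The only mildly delicate step is the bookkeeping that gives the factor $2d$: one must be careful that each element $E$ contributes once per face to the sum over interior edges (its faces shared with neighbours) and over boundary edges (its faces on $\partial\Omega$), and that the maximum of $h_e|e|/|E_j|$ over the two sides of an interior face is enough to control the contribution of both elements simultaneously after factoring out $\min_e \sigma_e$. Everything else is Cauchy--Schwarz, Young, and the quoted trace-like inequality \eqref{eq:e.to.E}.
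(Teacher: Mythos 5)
Your proposal is correct and follows essentially the same route as the paper's proof: Cauchy--Schwarz, Young's inequality with the parameter $\delta$, the edge-to-interior inequality \eqref{eq:e.to.E} to pass from face to element norms of $\vD(\vu_h)$, and the counting argument that each element contributes through at most $2d$ faces, which produces the factor $2d$ in $C_h$. The only (inessential) difference is that you apply Cauchy--Schwarz over the sum of faces and Young globally, whereas the paper applies Young facewise before summing; the constants come out the same.
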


\begin{proof}
	For a face $e\in\Gamma_h^i$, which is shared by elements $E_1$ and $E_2$, we have
	\begin{align*}
	\Big| \int_e  \{\vD(\vu_h)\}_e\vn_e \cdot [\vu_h]_e \Big| &\le \bigg(\frac{\sigma_e}{h_e}\bigg)^{\frac{1}{2}} \|[\vu_h]_e\|_{L^2(e)} \bigg(\frac{h_e}{\sigma_e}\bigg)^{\frac{1}{2}} \frac{\hat C}{2} \sum_{j=1}^2  \bigg(\frac{|e|}{|E_j|}\bigg)^{\frac{1}{2}} \|\vD(\vu_h)\|_{L^2(E_i)} \\
	&  \le \frac{1}{2}\bigg(\frac{1}{\delta}\frac{\sigma_e}{h_e} \|[\vu_h]_e\|^2_{L^2(e)}+\delta\frac{h_e}{2\sigma_e} \hat C^2\max_{j=1,2}\frac{|e|}{|E_j|}\big( \|\vD(\vu_h)\|_{L^2(E_1)}^2
	+ \|\vD(\vu_h)\|_{L^2(E_2)}^2 \big)\bigg).
	\end{align*}
 	Similarly, for $e\in\Gamma_h^b$, which is the face of an element $E$ adjacent to $\partial \Omega$, we have
	\begin{equation*}
	\Big| \int_e  \{\vD(\vu_h)\}_e\vn_e \cdot [\vu_h]_e \Big| \le \frac{1}{2}\left(\frac{1}{\delta}\frac{\sigma_e}{h_e} \|[\vu_h]_e\|^2_{L^2(e)}+\delta\frac{h_e}{\sigma_e} \hat C^2\frac{|e|}{|E|}\|\vD(\vu_h)\|_{L^2(E)}^2\right).
	\end{equation*}

	By using the last two inequalities in
	\begin{equation*}
	\Big|\sum_{e \in \Gamma_h} \int_e  \{\vD(\vu_h)\}_e\vn_e \cdot [\vu_h]_e \Big|  \le \sum_{e\in\Gamma_h}\Big| \int_e  \{\vD(\vu_h)\}_e\vn_e \cdot [\vu_h]_e \Big|
	\end{equation*}
	and splitting the sum on the right-hand side into sums over the disjoint sets $\Gamma_h^b$ and $\Gamma_h^i$, we have that
	\begin{align*}
	&\Big|\sum_{e \in \Gamma_h} \int_e  \{\vD(\vu_h)\}_e\vn_e \cdot [\vu_h]_e \Big|  
	\le \frac{1}{2}\bigg(\frac{1}{\delta}\sum_{e \in \Gamma_h^b}\frac{\sigma_e}{h_e} \|[\vu_h]_e\|^2_{L^2(e)}+\delta\sum_{e\in \Gamma_h^b}\frac{h_e}{\sigma_e} \hat C^2\frac{|e|}{|E|}\|\vD(\vu_h)\|_{L^2(E)}^2\bigg)\\
	&\quad+ \frac{1}{2}\bigg(\frac{1}{\delta}\sum_{e \in \Gamma_h^i}\frac{\sigma_e}{h_e} \|[\vu_h]_e\|^2_{L^2(e)}+\delta\sum_{e \in \Gamma_h^i}\frac{h_e}{2\sigma_e} \hat C^2\max_{j=1,2}\frac{|e|}{|E_j|}\big( \|\vD(\vu_h)\|_{L^2(E_1)}^2
	+ \|\vD(\vu_h)\|_{L^2(E_2)}^2 \big)\bigg)
	\end{align*}
	with the notational convention that when summing over $e \in \Gamma_h^b$ the element $E$ under the summation sign is the element adjacent to $\partial\Omega$ with face $e$, and when summing over $e \in \Gamma_h^i$ the elements $E_1$ and $E_2$ under the summation sign are the ones that share the face $e$. Hence, 
	\begin{align*}
	&\Big|\sum_{e \in \Gamma_h} \int_e  \{\vD(\vu_h)\}_e\vn_e \cdot [\vu_h]_e \Big|  
	\le \frac{1}{2}\left(\frac{1}{\delta} J_h(\vu_h,\vu_h) +\frac{\delta}{\min_{e \in \Gamma_h^b}\sigma_e}  \hat C^2 \max_{e \in \Gamma_h^b}\left(h_e\frac{|e|}{|E|}\right) \sum_{e\in \Gamma_h^b} \|\vD(\vu_h) \|_{L^2(E)}^2\right)\\
	&\quad+ \frac{1}{2}\bigg(\frac{\delta}{\min_{e \in \Gamma_h^i}\sigma_e} \hat C^2{\rm max}_{e \in \Gamma_h^i} \bigg(h_e \max_{j=1,2}\frac{|e|}{|E_j|}\bigg)\sum_{e \in \Gamma_h^i}\frac{1}{2} \big( \|\vD(\vu_h)\|_{L^2(E_1)}^2
	+ \|\vD(\vu_h)\|_{L^2(E_2)}^2 \big)\bigg).
	\end{align*}
	The asserted result \eqref{eqn:balance1} follows from the last inequality by noting that, for each $E\in\Th$, the factor $\|\vD(\vu_h)\|_{L^2(E)}^2$ appears at most $2d$ times.
\end{proof}

Concerning the expression appearing in \eqref{eqn:bdd.C1} we note that, thanks to the regularity assumption on the family of meshes, we have that $h_e \frac{|e|}{|E|} \le C$ and so 
\begin{equation} \label{e:Ch}
C_h \le C.
\end{equation}

\subsubsection{First \emph{a priori} estimates} \label{subsec:1staprioriDG}

By testing the first equation of \eqref{pb:nonconfscheme} with $\vv_h = \vu_h$, applying the third equation and the antisymmmetry \eqref{eq:antisym-non} of $d_h$, we obtain
$$ b_{1h}(\vT_h,\vu_h) + J_h(\vu_h,\vu_h) = \int_\Omega \vf \cdot \vu_h. $$
Next, by testing the second equation of \eqref{pb:nonconfscheme} with $\vS_h = \vT_h$ and substituting the above equality, we deduce that
$$ \alpha \|\vT_h\|^2_{L^2(\Omega)} +  J_h(\vu_h,\vu_h)  \le \int_\Omega \vf \cdot \vu_h. $$
Thus, in view of \eqref{eq:disc-Korn}, we have our first bound:
\begin{equation} \label{eqn:1stbdd}
\alpha \|\vT_h\|^2_{L^2(\Omega)} +  J_h(\vu_h,\vu_h) \le C\|\vf\|_{L^2(\Omega)}\|\vu_h\|_{V_h}.
\end{equation}
A further bound is arrived at by testing the second equation of \eqref{pb:nonconfscheme} with $\vS_h = \vD(\vu_h)$; hence,
$$\alpha\int_{\Omega}\vT_h:\vD(\vu_h)+\gamma\int_{\Omega}\mu(|\vT_h|)\vT_h:\vD(\vu_h)  =  \|\vD(\vu_h)\|^2_h - \sum_{e \in \Gamma_h} \int_e  \{\vD(\vu_h)\}_e\vn_e \cdot [\vu_h]_e.
$$
Then Proposition \ref{prop:balance} gives, for any $\delta >0$,
\begin{equation} \label{eqn:2ndbdd}
\|\vD(\vu_h)\|^2_h \le \alpha\|\vT_h\|_{L^2(\Omega)}\|\vD(\vu_h)\|_h + \gamma C_1 |\Omega|^{\frac{1}{2}}\|\vD(\vu_h)\|_h +  \frac{1}{2} \Big( \frac{1}{ \delta} J_h(\vu_h,\vu_h) + \delta \frac{C_h}{{\rm min}_{e \in \Gamma_h}\sigma_e} \|\vD(\vu_h)\|_h^2\Big).
\end{equation}

We choose $\delta=1$ and, upon recalling \eqref{e:Ch}, assume that $\sigma_e$ is chosen so that
\begin{equation} \label{eqn:sigma.e}
{\rm min}_{e \in \Gamma_h}\sigma_e \ge C_h.
\end{equation}
Next, by adding $J_h(\vu_h,\vu_h)$ to both sides of \eqref{eqn:2ndbdd}, applying \eqref{eqn:1stbdd} to bound this term, and using the norm of $V_h$, we infer that
$$ \|\vu_h\|^2_{V_h} \le \alpha\|\vT_h\|_{L^2(\Omega)}\|\vD(\vu_h)\|_h + \gamma C_1 |\Omega|^{\frac{1}{2}}\|\vD(\vu_h)\|_h + C\|\vf\|_{L^2(\Omega)}\|\vu_h\|_{V_h} +
\frac{1}{2} \|\vu_h\|^2_{V_h} $$
and thus
\begin{equation} \label{eqn:3rdbdd}
\frac{1}{2}\|\vu_h\|_{V_h} \le \alpha\|\vT_h\|_{L^2(\Omega)} + \gamma C_1 |\Omega|^{\frac{1}{2}} + C \|\vf\|_{L^2(\Omega)}.
\end{equation}
To close the estimates, we return to \eqref{eqn:1stbdd} and get
$$ \alpha \|\vT_h\|^2_{L^2(\Omega)} +  J_h(\vu_h,\vu_h) \le \frac{1}{2}\bigg(\delta_2  \|\vu_h\|_{V_h}^2 + \frac{C^2}{\delta_2} \|\vf\|_{L^2(\Omega)}^2\bigg) $$
for any $\delta_2 >0$.
Thus
$$ \alpha \|\vT_h\|_{L^2(\Omega)} \le \frac{\sqrt{\alpha}}{\sqrt{2 \delta_2}}C\|\vf\|_{L^2(\Omega)} +
\frac{\sqrt{\alpha \delta_2}}{\sqrt{2}} \|\vu_h\|_{V_h}, $$
and the choice $\delta_2 = \frac{1}{8\alpha}$ yields
$$ \alpha \|\vT_h\|_{L^2(\Omega)} \le 2 \alpha C \|\vf\|_{L^2(\Omega)} + \frac{1}{4}\|\vu_h\|_{V_h}. $$
Thus we have shown the following uniform and unconditional bounds:
\begin{equation} \label{eqn:4rthbdd}
\|\vu_h\|_{V_h} \le 4 C (1+2 \alpha) \|\vf\|_{L^2(\Omega)} +  4 \gamma C_1 |\Omega|^{\frac{1}{2}},\quad
\|\vT_h\|_{L^2(\Omega)} \le  \frac{C}{\alpha}(4\alpha +1)   \|  \vf\|_{L^2(\Omega)} + \frac{\gamma}{\alpha} C_1 |\Omega|^{\frac{1}{2}}.
\end{equation}

An \emph{a priori} estimate for the pressure requires an inf-sup condition. This is the subject of the next subsection.

\subsubsection{An inf-sup condition} \label{subsec:inf-sup DG}

In the nonconforming case considered here, the analogue of \eqref{infsup:b1h} reads
\begin{equation} \label{infsup:b2h}
\inf_{q_h\in Q_h}\sup_{\vv_h\in V_h}\frac{b_{2h}(\vv_h,q_h)}{\|\vv_h\|_{V_h}\|q_h\|_{L^2(\Omega)}}\geq \beta^{*}
\end{equation}
with a constant $\beta^*>0$ independent of $h$. To check this condition, recall Fortin's lemma; see for instance~\cite{ref:GiR}.
\begin{lem} \label{thm:lemFort}
	The discrete condition \eqref{infsup:b2h} holds uniformly with respect to $h$ if, and only if, there exists an approximation operator $\Pi_h \in {\mathcal L}( H^1_0(\Omega)^d ;V_h)$ such that, for all $\vv \in H^1_0(\Omega)^d$,
	\begin{equation} \label{eqn:Fort1}
	b_{2h}(\Pi_h(\vv) -\vv,q_h) = 0 \quad \forall\, q_h \in Q_h,
	\end{equation}
	and
	\begin{equation} \label{eqn:Fort2}
 	\|\Pi_h(\vv)\|_{V_h} \le C |\vv|_{H^1(\Omega)}
 	\end{equation}
	with a constant $C$ independent of $h$.
\end{lem}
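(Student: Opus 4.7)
The plan is to treat the two directions separately, which is how such Fortin-type equivalences are traditionally established.

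For the direction ``Fortin operator $\Rightarrow$ inf-sup,'' I would argue as follows. Since $Q_h\subset Q=L_0^2(\Omega)$ and the continuous inf-sup condition \eqref{infsup:b2} holds between $V=H^1_0(\Omega)^d$ and $Q$, for any fixed $q_h\in Q_h$ there exists $\vv\in V$, $\vv\neq\mathbf{0}$, with $b_2(\vv,q_h)\ge \beta\,\|\vD(\vv)\|_{L^2(\Omega)}\|q_h\|_{L^2(\Omega)}$. Because $\vv\in H^1_0(\Omega)^d$, all jump contributions in the definition \eqref{eq:b2hnon} of $b_{2h}$ vanish, so $b_{2h}(\vv,q_h)=b_2(\vv,q_h)$. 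Combining this with the Fortin property \eqref{eqn:Fort1} yields $b_{2h}(\Pi_h(\vv),q_h)=b_{2h}(\vv,q_h)\ge\beta\,\|\vD(\vv)\|_{L^2(\Omega)}\|q_h\|_{L^2(\Omega)}$, and then \eqref{eqn:Fort2} bounds $\|\Pi_h(\vv)\|_{V_h}\le C\|\vD(\vv)\|_{L^2(\Omega)}$. Dividing, I obtain a discrete sup bounded below by $(\beta/C)\|q_h\|_{L^2(\Omega)}$, so \eqref{infsup:b2h} holds with $\beta^*=\beta/C$.

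For the direction ``inf-sup $\Rightarrow$ Fortin operator,'' I would construct $\Pi_h$ in two stages. First, introduce any stable quasi-interpolant $I_h\in\mathcal L(H^1_0(\Omega)^d;V_h)$ satisfying $\|I_h(\vv)\|_{V_h}\le c_1|\vv|_{H^1(\Omega)}$; for the nonconforming space $V_h$ in \eqref{eq:Vhnon-k} this can be obtained, for example, by composing a conforming Scott--Zhang or Clément interpolation into a continuous piecewise polynomial subspace with the natural injection, since jumps then vanish and $J_h(I_h(\vv),I_h(\vv))=0$. Second, define the linear operator $B_h:V_h\to Q_h'$ by $\langle B_h\vw_h,q_h\rangle=b_{2h}(\vw_h,q_h)$. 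The inf-sup condition \eqref{infsup:b2h} is exactly the statement that the adjoint $B_h^*:Q_h\to V_h'$ is bounded below by $\beta^*$, so $B_h^*$ is injective with closed range, and hence $B_h$ is surjective with a bounded right inverse $R_h:Q_h'\to V_h$ of norm at most $1/\beta^*$ (one may take $R_h$ to map into the orthogonal complement of $\ker B_h$ in $V_h$). I then set
\begin{equation*}
\Pi_h(\vv) := I_h(\vv) + R_h\!\left(\,q_h\mapsto b_{2h}(\vv-I_h(\vv),q_h)\,\right).
\end{equation*}
By construction, $b_{2h}(\Pi_h(\vv)-\vv,q_h)=0$ for every $q_h\in Q_h$, proving \eqref{eqn:Fort1}. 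For \eqref{eqn:Fort2}, continuity of $b_{2h}$ (see \eqref{eq:bdd.b2h}) yields $\|q_h\mapsto b_{2h}(\vv-I_h(\vv),q_h)\|_{Q_h'}\le C(\|\vv\|_V+\|I_h(\vv)\|_{V_h})\le C'|\vv|_{H^1(\Omega)}$, whence $\|\Pi_h(\vv)\|_{V_h}\le \|I_h(\vv)\|_{V_h}+(1/\beta^*)\|\cdots\|_{Q_h'}\le C|\vv|_{H^1(\Omega)}$. Linearity of $\Pi_h$ is immediate from linearity of $I_h$ and $R_h$.

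The main obstacle is the second direction, specifically ensuring the existence of the preliminary stable approximation $I_h:H^1_0(\Omega)^d\to V_h$ with the $V_h$-norm bound. For the continuous Taylor--Hood setting this is trivial, but here $V_h$ is fully discontinuous, so one must ensure that $J_h(I_h(\vv),I_h(\vv))$ is controlled by $|\vv|_{H^1(\Omega)}^2$; the cleanest way is to take $I_h$ with continuous range (e.g.\ Scott--Zhang into $V_h\cap H^1_0(\Omega)^d$), which makes the jump term vanish identically. The rest of the argument is a standard functional-analytic consequence of the closed-range theorem applied to $B_h^*$ and does not present any additional difficulty.
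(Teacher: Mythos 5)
Your argument is essentially the classical Fortin equivalence, which is in fact all the paper invokes here: Lemma \ref{thm:lemFort} is not proved in the text but cited from \cite{ref:GiR}, with the remark that the extension to discontinuous velocity spaces is immediate because $b_{2h}$ in \eqref{eq:b2hnon} is consistent with the divergence, i.e.\ all jump terms vanish for $\vv\in H^1_0(\Omega)^d$ so that $b_{2h}(\vv,q_h)=b_2(\vv,q_h)$. Your first direction makes exactly this consistency explicit and is correct, up to the harmless point that the supremum in \eqref{infsup:b2} need not be attained (work with a near-maximizer, or with a bounded right inverse of the divergence), and you rightly pass from $|\vv|_{H^1(\Omega)}$ to $\|\vD(\vv)\|_{L^2(\Omega)}$ via Korn's inequality \eqref{def:korn}. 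Your second direction is the standard splitting/right-inverse construction and is also sound; note only that the continuity of the functional $q_h\mapsto b_{2h}(\vv-I_h(\vv),q_h)$ must be argued separately on the two pieces, using $b_{2h}=b_2$ for the $H^1_0$ part and \eqref{eq:bdd.b2h} for the $V_h$ part, which your split implicitly does.

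The one concrete blemish is the detail you yourself single out as the main obstacle: for the fully discontinuous $\polP_k^d$ spaces \eqref{eq:Vhnon-k} on the quadrilateral/hexahedral meshes of this subsection, the suggested ``Scott--Zhang into a continuous piecewise-polynomial subspace composed with the injection'' is not available. The natural conforming space $\polQ_1^d$ is not contained in $V_h$ (for the relevant case $k=1$), and continuous piecewise-$\polP_1$ functions vanishing on $\partial\Omega$ can reduce to $\{\mathbf 0\}$ on an axis-aligned square mesh, so that route yields nothing useful. This does not break your proof, however, because the lemma demands no approximation property of $\Pi_h$: only the stability bound \eqref{eqn:Fort2} is needed, so you may simply take $I_h=0$, or the elementwise $L^2$ projection $R_h$ whose $V_h$-stability (including the jump penalty) is recorded in \eqref{eq:globalRh}; the correction step alone then delivers \eqref{eqn:Fort1} and \eqref{eqn:Fort2}. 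A stable \emph{and} approximating $I_h$ only becomes necessary when one also wants error estimates, which is why the paper's subsequent verification uses $\Pi_h=R_h-\vc_h$ built from the $L^2$ projection rather than a conforming interpolant.
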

Originally, Fortin's lemma was stated for discrete functions in subspaces of $H^1_0(\Omega)^d$, but the extension to spaces of discontinuous functions is straightforward, as long as the form $b_{2h}(\cdot,\cdot)$ is consistent with the divergence, which is the case here.

As the proof of \eqref{eqn:Fort1}, \eqref{eqn:Fort2} is fairly technical, we restrict the discussion to the first order case,  i.e., $k=1$, in hexahedra. The  quadrilateral case is much simpler.


\subsubsection{The inf-sup condition in planar-faced  hexahedra for $k=1$} \label{subsubsec:hexainf-sup}

The construction of a suitable operator $\Pi_h$ is usually done by correcting a good approximation operator  $R_h$. For instance, we can use the $L^2$ projection onto the space of polynomials of degree one defined locally in each element, so that  $R_h(\vv)$ belongs to $V_h$ and satisfies optimal approximation properties; see for instance~\cite{BreSco}. Then $R_h(\vv)$ is corrected by constructing $\vc_h \in V_h$ such that
\begin{equation} \label{eqn:corrct1}
b_{2h}(\vc_h,q_h) =  b_{2h}(R_h(\vv) -\vv,q_h)\quad \forall\, q_h \in Q_h.
\end{equation}
By expanding $b_{2h}$ and denoting by $q_E$ the value of $q_h$ in $E$, \eqref{eqn:corrct1} reads
$$ -\sum_{E \in \Th} q_E \int_E \di(\vc_h) + \sum_{e \in \Gamma_h} \int_e [\vc_h]_e\cdot \vn_e \{q_h\}_e =
-\sum_{E \in \Th} q_E \int_E \di(R_h(\vv) -\vv) + \sum_{e \in \Gamma_h} \int_e [R_h(\vv) -\vv]_e\cdot \vn_e \{q_h\}_e. $$
Green's formula in each element yields
\begin{equation} \label{eqn:corrct2}
-\sum_{E \in \Th} q_E \int_{\partial E} \vc_h\cdot  \vn_E + \sum_{e \in \Gamma_h} \int_e [\vc_h]_e\cdot \vn_e \{q_h\}_e =
-\sum_{E \in \Th} q_E \int_{\partial E} (R_h(\vv) -\vv)\cdot  \vn_E + \sum_{e \in \Gamma_h} \int_e [R_h(\vv) -\vv]_e\cdot \vn_e \{q_h\}_e
\end{equation}
with $\vn_E$ the unit exterior normal to $E$. Consider now an interior face $e$ shared by $E_1$ and $E_2$, so that $\vn_e$ is interior to $E_2$;  the contribution of $e$ to the left-hand side of \eqref{eqn:corrct2} is
$$ -q_{E_1} \int_e \restriction{\vc_h}{E_1}\cdot \vn_e + q_{E_2} \int_e \restriction{\vc_h}{E_2}\cdot \vn_e  + \int_e \frac{1}{2}
(q_{E_1} + q_{E_2}) (\restriction{\vc_h}{E_1}-\restriction{\vc_h}{E_2})\cdot \vn_e = - \int_e [q_h]_e\{\vc_h\}_e \cdot \vn_e $$
with a similar contribution to the right-hand side. Notice also that the contribution of a boundary face $e\in\Gamma_h^b$ is equal to zero on both sides of \eqref{eqn:corrct2}.
Therefore a sufficient condition for \eqref{eqn:corrct2} is that
\begin{equation}
\label{eqn:corrct3}
\int_e \restriction{\vc_h}{E} \cdot \vn_e = \int_e \restriction{(R_h(\vv) - \vv)}{E} \cdot \vn_e.
\end{equation}
We will thus construct $\vc_h\in V_h$ by imposing \eqref{eqn:corrct3} for each element $E\in\Th$ and each face $e\in\partial E$. To simplify the notation, we will write  from now on  $\vc_h$ and $(R_h(\vv)-\vv)$ instead of $\restriction{\vc_h}{E}$ and $\restriction{(R_h(\vv) - \vv)}{E}$, respectively.


Let $E$ be an arbitrary hexahedral element of  $\Th$ with faces $e_i$, centre of face $\vb_i$, and exterior unit normal $\vn_i$, $1 \le i \le 6$. To be specific, let $\va_i$, $i=1,2,3,4$, be the vertices of $e_1$, $\va_i$, $i=1,3,5,6$, the vertices of $e_2$, $\va_i$, $i=1,2,5,7$, the vertices of $e_3$, $\va_i$, $i=5,6,7,8$, the vertices of $e_4$, $\va_i$, $i=2,4,7,8$, the vertices of $e_5$, and $\va_i$, $i=3,4,6,8$, the vertices of $e_6$. The ordering of the nodes is illustrated in Figure \ref{fig:3D_reference}. Note that for $i=1,2,3$, $e_{i+3}$ is the face opposite to $e_i$, opposite in the sense that its intersection with $e_i$ is empty.

Without loss of generality, we assume that the vertex $\va_1$ is located at the origin and that the face $e_1$ lies on the $x_3=0$ plane. Indeed, this situation can be obtained via a rigid motion (translation plus rotation), which preserves all normal vectors. Therefore, the normal to the face $e_1$ is parallel to the $x_3$ axis.  Now, the idea is to transform $E$ onto a ``reference'' element $\hat E$ by an affine mapping ${\mathcal F}_E$ so that the subtetrahedron $S_1$ of $E$ based on $e_1$ and containing the origin $\va_1$ is mapped onto the unit tetrahedron $\hat S_1$. More precisely,  as $e_2$  and $e_3$ are both adjacent to $e_1$,  $S_1$ is the subtetrahedron with vertices $\va_1$, $\va_2$, $\va_3$, and $\va_5$, and
$\hat S_1$ has vertices $\hat \va_1 = (0,0,0)$, $\hat \va_2 = (0,1,0)$, $\hat \va_3 = (1,0,0)$, $\hat \va_5 = (0,0,1)$,  see Figure \ref{fig:3D_reference} for an illustration and some notation. This transformation and notation will be used till the end of this subsection.
It stems from the regularity of the family of triangulations that
there exists a constant $M$, independent of $E$ and $h$, such that
\begin{equation} \label{eqn:diam}
\mbox{diameter} (\hat E)  \le M.
\end{equation}

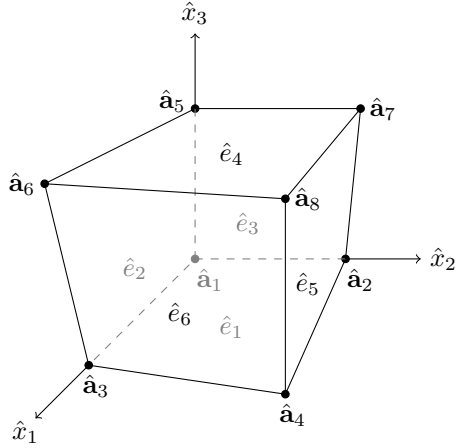
\begin{figure}[htbp]
\begin{center}
\begin{tikzpicture}
\draw [fill,gray] (0,0) circle [radius=0.05];
\draw (0.2,0) node[below,gray]{$\hat \va_1$};

\draw [fill] (2,0) circle [radius=0.05];
\draw (2.2,0) node[below]{$\hat \va_2$};

\draw [fill] (-1.41421,-1.41421) circle [radius=0.05];
\draw (-1.31421,-1.41421) node[below]{$\hat \va_3$};

\draw [fill] (1.2,-1.8) circle [radius=0.05];
\draw (1.3,-1.8) node[below]{$\hat \va_4$};

\draw [fill] (0,2) circle [radius=0.05];
\draw (0,2.1) node[left]{$\hat \va_5$};

\draw [fill] (-2.0,1.0) circle [radius=0.05];
\draw (-2.0,1.0) node[left]{$\hat \va_6$};

\draw [fill] (2.2,2.0) circle [radius=0.05];
\draw (2.2,2.0) node[right]{$\hat \va_7$};

\draw [fill] (1.2,0.8) circle [radius=0.05];
\draw (1.2,0.8) node[right]{$\hat \va_8$};

\draw (-1.41421,-1.41421) -- (1.2,-1.8) -- (2,0);
\draw[dashed,gray] (2,0) -- (0,0) -- (-1.41421,-1.41421); 
\draw (0.2,-0.9) node[right,gray]{$\hat e_1$};

\draw (-1.41421,-1.41421) -- (-2.0,1.0) -- (0,2);
\draw[dashed,gray] (0,2) -- (0,0); 
\draw (-0.8,-0.4) node[above,gray]{$\hat e_2$};

\draw (0,2) -- (2.2,2.0) -- (2,0); 
\draw (0.7,0.2) node[above,gray]{$\hat e_3$};

\draw (-2.0,1.0) -- (1.2,0.8) -- (2.2,2.0); 

\draw (1.2,-1.8) -- (1.2,0.8); 

\draw (0.2,1.4) node[right]{$\hat e_4$};
\draw (1.5,-0.6) node[above]{$\hat e_5$};
\draw (-0.2,-1.0) node[above]{$\hat e_6$};

\draw[->] (-1.41421,-1.41421) -- (-2.12132,-2.12132);
\draw (-1.95,-2.3) node[left]{$\hat x_1$};
	
\draw[->] (2,0) -- (3,0);
\draw (3,0) node[right]{$\hat x_2$};

\draw[->] (0,2) -- (0,3);
\draw (0,3) node[above]{$\hat x_3$};

\end{tikzpicture}
\caption{Some notation for the ``reference" element $\hat E$.} \label{fig:3D_reference}
\end{center}
\end{figure}

The  affine mapping  ${\cal F}_E$ has the expression
$$ \vx = {\cal F}_E(\hat \vx) = \vB \hat \vx, $$
where  the constant term is zero since $\va_1$ is the origin, and the matrix $\vB$ is nonsingular;  its columns are respectively $\va_3=(a_3^1,a_3^2,0)^t$, $\va_2=(a_2^1,a_2^2,0)^t$ and  $\va_5 = (a_5^1,a_5^2,a_5^3)^t$.
The image of the remaining vertices of $E$ are $\hat \va_i = {\cal F}^{-1}_E(\va_i)$, $i=4, 6, 7, 8$. As ${\cal F}_E$ is an affine transformation, it transforms faces onto faces, edges onto edges, and vertices onto vertices. Thus, since $\va_4$ is in the plane $x_3 = 0$, then $\hat\va_4$ is in the plane $\hat x_3 = 0$. Likewise, $\hat \va_6$ is in the plane $\hat x_2 = 0$, $\hat \va_7$ in the plane $\hat x_1 = 0$, and $\hat \va_8$ in the plane determined by $\hat \va_4$, $\hat \va_2$, $\hat \va_7$, as well as the plane determined by $\hat \va_7$, $\hat \va_5$, $\hat \va_6$, and  the plane determined by $\hat \va_6$, $\hat \va_3$, $\hat \va_4$, hence in the intersection of these three planes.  Therefore $\hat E$ is located in the first octant of $\mathbb{R}^3$.
Let $\hat\vn_i$ denote the unit exterior normal vector to $\hat e_i$. It is related to $\vn_i$ by the general formula
\begin{equation} \label{eqn:def_hat_n_3D}
\hat \vn_i = \frac{ \vB^t \vn_i }{|\vB^t \vn_i|}.
\end{equation}

\noindent The advantage of having $e_1$ on the plane $x_3 =0$ is that $\hat \vn_1 =\vn_1=(0,0,-1)^t$.
We also have $\hat \vn_2=(0,-1,0)^t$, and $\hat \vn_3=(-1,0,0)^t$.  Thus
\begin{equation} \label{eq:nu03D1}
|\hat n_1^3| =  |\hat n_2^2|= |\hat n_3^1| =1,
\end{equation}
and the regularity of the family $\Th$ implies that there exists a constant $\nu_0$, independent of $h$ and  $E$, such that
\begin{equation} \label{eq:nu03D}
 |\hat n_4^3|,  |\hat n_5^2|,  |\hat n_6^1| \ge \nu_0.
\end{equation}

With this transformation, and after cancelling $| {\rm det} \vB|$ on both sides, \eqref{eqn:corrct3} reads locally
$$ \int_{\hat e} \hat \vc_{ h } \cdot (\vB^t)^{-1}\hat \vn_{\hat e} = \int_{\hat e} (\widehat{R_h}(\hat \vv) - \hat \vv) \cdot (\vB^t)^{-1}\hat \vn_{\hat e} ,
$$
where the hat denotes composition with ${\cal F}_E$. Thus, by performing the change of variable
$$\hat \vd_h = \vB^{-1}\hat \vc_{ h }
$$
and defining the face moment
$$ m_{\hat e} (f)  := \frac{1}{|\hat e|} \int_{\hat e} f,
$$
\eqref{eqn:corrct3} is equivalent to
\begin{equation}
\label{eqn:corrct43D}
m_{\hat e_i}(\hat \vd_{ h }) \cdot \hat \vn_i = \hat g_i : = \frac{1}{|\hat e_i|} \int_{\hat e_i} \vB^{-1}(\widehat{R_h}(\hat \vv) - \hat \vv) \cdot \hat \vn_i,\quad 1 \le i \le 6.
\end{equation}
This is a linear system of six equations in twelve unknowns, the coefficients of  $\hat \vd_{ h }$.  Therefore, we can freely choose six coefficients and we have the following existence lemma.

\begin{lem}
	\label{lem:const-correction}
	There exists exactly one polynomial vector $\hat \vd_h = (\hat d_1,\hat d_2,\hat d_3)^{ t }$ that satisfies \eqref{eqn:corrct43D} and the following six conditions:
	\begin{equation}
	\label{eq:zero.d_h}
	m_{\hat e_1} (\hat d_1)  = m_{\hat e_5} (\hat d_1)= m_{\hat e_1} (\hat d_2)= m_{\hat e_6} (\hat d_2)=m_{\hat e_5} (\hat d_3)=m_{\hat e_6} ( \hat d_3) =0.
	\end{equation}
\end{lem}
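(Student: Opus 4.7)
The plan is to prove Lemma~\ref{lem:const-correction} by a dimension-counting argument combined with an injectivity analysis of the associated $12\times 12$ linear system. Since each component $\hat d_j$ lies in $\polP_1$ and hence carries four scalar unknowns, $\hat\vd_h$ is parametrised by 12 coefficients and we are imposing exactly 12 linear conditions (six from \eqref{eqn:corrct43D} and the six listed in \eqref{eq:zero.d_h}). It therefore suffices to prove injectivity: if all right-hand sides vanish, i.e.\ $\hat g_i=0$ for $1\le i\le 6$ and the six moments in \eqref{eq:zero.d_h} are zero, then $\hat\vd_h\equiv\mathbf{0}$.

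First I would exploit the axis-aligned normals \eqref{eq:nu03D1} on the three faces $\hat e_1,\hat e_2,\hat e_3$. The corresponding cases $i=1,2,3$ of \eqref{eqn:corrct43D} with zero right-hand side decouple into the scalar identities $m_{\hat e_1}(\hat d_3)=0$, $m_{\hat e_2}(\hat d_2)=0$, and $m_{\hat e_3}(\hat d_1)=0$. Next I would plug the six moment conditions of \eqref{eq:zero.d_h} into the cases $i=5$ and $i=6$ of \eqref{eqn:corrct43D}: in the former, two of the three contributions vanish outright and the remaining equation reads $\hat n_5^2\,m_{\hat e_5}(\hat d_2)=0$, which together with the lower bound $|\hat n_5^2|\ge\nu_0$ from \eqref{eq:nu03D} forces $m_{\hat e_5}(\hat d_2)=0$; symmetrically, $i=6$ forces $m_{\hat e_6}(\hat d_1)=0$.

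At this stage I have collected four vanishing face moments for each of $\hat d_1$ (on $\hat e_1,\hat e_3,\hat e_5,\hat e_6$) and $\hat d_2$ (on $\hat e_1,\hat e_2,\hat e_5,\hat e_6$). For a polynomial in $\polP_1$, the face moment equals the value at the centroid of the (planar) face, so these four conditions are point evaluations. I would then argue that in each case the four centroids are affinely independent in $\mathbb{R}^3$, which immediately yields $\hat d_1=\hat d_2=0$: the centroids of $\hat e_1,\hat e_2,\hat e_3$ sit in the coordinate planes $\hat x_3=0$, $\hat x_2=0$, $\hat x_1=0$ respectively, while the centroids of the opposite faces $\hat e_4,\hat e_5,\hat e_6$ must have the corresponding coordinate bounded away from zero by a constant depending only on the regularity of the family of meshes (via \eqref{eq:reg_hexa}), so that the four evaluation points cannot be coplanar. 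Finally, once $\hat d_1=\hat d_2=0$, the case $i=4$ of \eqref{eqn:corrct43D} reduces to $\hat n_4^3\,m_{\hat e_4}(\hat d_3)=0$, and $|\hat n_4^3|\ge\nu_0$ from \eqref{eq:nu03D} gives $m_{\hat e_4}(\hat d_3)=0$; combined with the three already-vanishing moments of $\hat d_3$ on $\hat e_1,\hat e_5,\hat e_6$, the same affine-independence argument on the centroids of $\hat e_1,\hat e_4,\hat e_5,\hat e_6$ produces $\hat d_3=0$.

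The main obstacle will be the geometric step of verifying affine independence of the four relevant centroids in each of the three subproblems; all the other steps reduce to linear algebra on the decoupled scalar equations established above. Intuitively this is natural because the four faces under consideration always contain two pairs of ``opposite'' faces whose centroids lie on opposite sides of the hexahedron, and the regularity bound \eqref{eq:reg_hexa} (which is what prevents the opposite faces from collapsing onto one another or tilting too much) should provide enough rigidity. Nonetheless, writing out a clean argument that does not rely on parallelepipedal geometry but only on convexity of $\hat E$ and \eqref{eq:reg_hexa} is where the work lies.
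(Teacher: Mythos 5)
Your proposal follows essentially the same route as the paper's proof: reduce to injectivity of the homogeneous square system, use the axis-aligned normals $\hat\vn_1,\hat\vn_2,\hat\vn_3$ together with the prescribed zero moments and the bounds \eqref{eq:nu03D} to decouple the lines $i=5,6$ (and later $i=4$), and conclude $\hat d_1=\hat d_2=\hat d_3=0$ from four vanishing face moments for each component on exactly the same face sets. Your recasting of the final step as affine independence of the four face centroids (moments of $\polP_1$ functions being centroid values) is equivalent to the paper's unisolvence claim, which the paper likewise settles only by appealing to the regularity of the hexahedron via the linearly independent edge vectors at $\hat\va_4$.
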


\begin{proof}
	Once the six conditions \eqref{eq:zero.d_h} are prescribed, we are left with a square linear system of six equations in six unknowns. Therefore it suffices to prove that the only solution of the corresponding homogeneous system is the zero solution.
	To begin with, we consider the lines $i=5$ and $i=6$ in \eqref{eqn:corrct43D}. In view of \eqref{eq:nu03D1} and \eqref{eq:nu03D}, the strategy for the choice 
	\eqref{eq:zero.d_h}	is to set to zero the coefficients of $\hat n_6^2$ and $\hat n_6^3$ and those of $\hat n_5^1$ and $\hat n_5^3$, i.e., prescribe $m_{\hat e_5} (\hat d_1)= m_{\hat e_6} (\hat d_2)=m_{\hat e_5} (\hat d_3)=m_{\hat e_6} ( \hat d_3) =0$. With this assumption, the lines  $i=6$ and $i=5$ reduce respectively to
	\begin{equation}
	\label{eq:i=6-5}
	m_{\hat e_6}(\hat d_1) = 0,\quad  m_{\hat e_5}(\hat d_2) = 0.
	\end{equation}
	Next, we consider the line $i=1$. As $\hat n_1^3 =  -  1$ is the only nonzero component, it reduces to
	\begin{equation}
	\label{eq:i=1}
	m_{\hat e_1}(\hat d_3) =  0.
	\end{equation}
	Similarly, when $i=2$ and $i=3$ we have, respectively
	\begin{equation}
	\label{eq:i=2}
	m_{\hat e_2}(\hat d_2) =  0, \quad m_{\hat e_3}(\hat d_1) = 0.
	\end{equation}
	Collecting these results  and the two extra assumptions $m_{\hat e_1} (\hat d_1)  = m_{\hat e_1} (\hat d_2)=0$ in \eqref{eq:zero.d_h}, we find that
	$$m_{\hat e_1}(\hat d_1) = m_{\hat e_5}(\hat d_1) = m_{\hat e_6}(\hat d_1) =
	m_{\hat e_3}(\hat d_1) =  0$$
	$$m_{\hat e_1}(\hat d_2) = m_{\hat e_5}(\hat d_2) = m_{\hat e_6}(\hat d_2) =
	m_{\hat e_2}(\hat d_2)  =0.
	$$
	The three faces $\hat e_1$, $\hat e_5$, $\hat e_6$ share the vertex  $\hat\va_4$, and the regularity of the hexahedron implies that the three vectors along the segments $[\hat\va_4,\hat\va_3]$,  $[\hat\va_4,\hat\va_8]$, and $[\hat\va_4,\hat\va_2]$ is a set of three linearly independent vectors of $\polR^3$.  Then the regularity of the hexahedron implies that a polynomial of degree one is uniquely determined by its moments on the four faces $\hat e_1$, $\hat e_5$, $\hat e_6$,  $\hat e_i$ for any $i$ in the set $\{2,3,4\}$.  
	Hence, as $\hat d_1$ (respectively, $\hat d_2$)  is a polynomial of degree one, the first set (respectively, second set) of equalities and the regularity of the hexahedron imply that $\hat d_1 = 0$, respectively,  $\hat d_2=0$. When $i=4$, this leads to $m_{\hat e_4}(\hat d_3) = 0$. Consequently,
	$$m_{\hat e_1}(\hat d_3) = m_{\hat e_5}(\hat d_3) =  m_{\hat e_6}(\hat d_3) = m_{\hat e_4}(\hat d_3)  =0,
	$$
	and $\hat d_3=0$. Thus $\hat \vd_h = { \bf 0}$ and the system has a unique solution.
\end{proof}

Let $\vM_{\hat E}$ be the $6 \times 6$ matrix of the system \eqref{eqn:corrct43D} under the restriction \eqref{eq:zero.d_h}. It stems from Lemma \ref{lem:const-correction} that $\vM_{\hat E}$ is 
nonsingular. Furthermore, the regularity of the hexahedron implies that $\vM_{\hat E}$ is a continuous function of $\hat E$, thus continuous in a compact set of $\polR^3$. Hence the norm of its inverse is bounded by a constant $\hat C$, independent of $\hat E$,
\begin{equation}
\label{eqn:norm.invM}
|\vM^{-1}_{\hat E} | \le \hat C.
\end{equation}

The stability of the correction follows now easily.

\begin{lem}
	\label{lem:stab-correction}
	There exists a constant $\hat C$, independent of $h$ and $E$, such that for all $E$ in $\Th$ and all $e$ in $\Gamma_h$,
	\begin{equation}
	\label{eq:localhat_c}
	\|\vc_{ h }\|_{L^2(E)} \le \hat C\,h_E\, |\vv|_{H^1(E)},\quad  | \vc_{ h }|_{H^1(E)}\le \hat C |\vv|_{H^1(E)}, \quad  \bigg( \frac{\sigma_e}{h_e} \bigg)^{\frac{1}{2}} \|[\vc_{ h }]_e\|_{L^2(e)} \le  \hat C  \big(|\vv|_{H^1(E_1)} + |\vv|_{H^1(E_2)}\big),
	\end{equation}
	where $E_1$ and $E_2$ are the two elements sharing $e$, when $e$ is an interior face, and the sum is reduced to one term, namely the element $E$ adjacent to $e$, when $e$ is a boundary face.
\end{lem}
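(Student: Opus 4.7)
The plan is to prove all three estimates by transferring the problem to the reference element $\hat E$ and then scaling back; the key ingredients are the uniform stability \eqref{eqn:norm.invM} of $\vM_{\hat E}^{-1}$, the optimal approximation properties of the local $L^2$-projection $R_h$, and the standard affine scaling identities associated with $\mathcal F_E$.

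On $\hat E$, whose diameter is uniformly bounded (see \eqref{eqn:diam}), I would observe that $\hat\vd_h = \vB^{-1}\hat\vc_h$ lies in a fixed finite-dimensional polynomial space, so all of its Sobolev norms are equivalent to the supremum of its coefficients. Lemma \ref{lem:const-correction} together with \eqref{eqn:norm.invM} thus yields $\|\hat\vd_h\|_{L^2(\hat E)} + |\hat\vd_h|_{H^1(\hat E)} \lesssim \max_{1\le i \le 6}|\hat g_i|$. To bound each $|\hat g_i|$, I would apply Cauchy--Schwarz to its defining integral, perform the change of variables between $\hat e_i$ and $e_i$, use the regularity bound $|\vB^{-1}|\lesssim h_E^{-1}$ together with $|e_i|\gtrsim h_E^{d-1}$, and finally insert the combined trace-and-approximation estimate $\|R_h(\vv)-\vv\|_{L^2(e_i)}\lesssim h_E^{1/2}|\vv|_{H^1(E)}$. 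Tracking the $h_E$-powers carefully, the various factors combine to give $\max_i|\hat g_i|\lesssim h_E^{-d/2}|\vv|_{H^1(E)}$.

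To return to $E$, I would invoke the standard scaling identities $\|\vc_h\|_{L^2(E)}^2 \lesssim h_E^{d+2}\|\hat\vd_h\|_{L^2(\hat E)}^2$ and $|\vc_h|_{H^1(E)}^2 \lesssim h_E^{d}|\hat\vd_h|_{H^1(\hat E)}^2$; inserting the bound on $\max_i|\hat g_i|$ the $h_E$-powers cancel cleanly and the first two inequalities in \eqref{eq:localhat_c} follow. For the jump estimate, a scaled trace inequality applied on each element adjacent to $e$, together with the two bounds just obtained, yields $\|[\vc_h]_e\|_{L^2(e)}\lesssim h_e^{1/2}\bigl(|\vv|_{H^1(E_1)}+|\vv|_{H^1(E_2)}\bigr)$, and multiplication by $(\sigma_e/h_e)^{1/2}$ gives the third bound, with the factor $\sigma_e^{1/2}$ absorbed into $\hat C$ under the natural assumption that the penalty parameters are uniformly bounded from above. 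The main obstacle is the bookkeeping of $h_E$-powers at each step: one must verify that the $h_E^{-d/2}$ growth coming from the fixed-measure surface integrals on $\hat E$ is exactly canceled by the volume factor $h_E^{d/2}$ produced when scaling $\vc_h$ back to $E$. All of these cancellations rest on the regularity assumptions \eqref{eq:reg_mesh}/\eqref{eq:reg_hexa} on the mesh and on the uniform invertibility \eqref{eqn:norm.invM} of the reference matrix $\vM_{\hat E}$.
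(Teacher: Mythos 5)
Your argument is correct and follows essentially the same route as the paper's proof: both bound $\hat \vd_h$ on the reference configuration using the uniform invertibility \eqref{eqn:norm.invM} together with trace and approximation properties of $R_h$, and then scale back through $\mathcal F_E$ (with $\hat\vc_h=\vB\hat\vd_h$) to obtain all three estimates. The differences are only bookkeeping — you use $L^2$/$H^1$ norm equivalence and a scaled trace inequality where the paper bounds $\|\hat \vd_h\|_{L^\infty(\hat E)}$ and estimates the jump via $|e|^{1/2}\|\vc_h\|_{L^\infty(E)}$ — and your absorption of $\sigma_e^{1/2}$ into the constant under a uniform upper bound on the penalty parameters is likewise implicit in the paper's final step.
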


\begin{proof}
	The notation $\hat C$ below refers to different constants that are all independent of $h$ and $E$.
	Recalling \eqref{eqn:corrct43D},  \eqref{eqn:diam}  and the transformation from $\hat S_1$ onto $S_1$, we observe that, for any $i$,
	$$|\hat g_i| \le  \frac{\hat C}{\varrho_{S_1}} \|\widehat{R_h}(\hat \vv) - \hat \vv\|_{L^1(\hat e_i)}\le \frac{\hat C}{\varrho_{S_1}} \|\widehat{R_h}(\hat \vv) - \hat \vv\|_{L^2(\hat e_i)}. $$
	By a trace inequality in $\hat E$ and the approximation property of $\widehat{R_h}$ in $\hat E$, we have
	$$
	\sum_{i=1}^6 \|\widehat{R_h}(\hat \vv) - \hat \vv\|_{L^2(\hat e_i)} \le
	\hat C \|\widehat{R_h}(\hat \vv) - \hat \vv\|_{H^1(\hat E)} \le
	\hat C |\hat \vv|_{H^1(\hat E)}.
	$$
	Then, by reverting to $E$,
	$$\sum_{i=1}^6 \|\widehat{R_h}(\hat \vv) - \hat \vv\|_{L^2(\hat e_i)} \le  \hat C\frac{h_{S_1}}{|E|^{\frac{1}{2}}} |\vv|_{H^1( E)}.
	$$
	In view of \eqref{eqn:norm.invM} and the regularity of the family $\Th$,  the above relations lead to the following bound on $\hat \vd_h$:
	$$
	\|\hat \vd_h\|_{L^\infty( \hat E)} \le \frac{\hat C}{|E|^{\frac{1}{2}}}\frac{h_{S_1}}{\varrho_{S_1}} |\vv|_{H^1( E)} \le \frac{\hat C}{|E|^{\frac{1}{2}}}  |\vv|_{H^1( E)};
	$$
	with $\hat \vc_{ h } = \vB\hat \vd_h$,  this yields 
	\begin{equation}
	\label{eq:bddhatc}
	\|\vc_h\|_{L^\infty(E)} = \|\hat \vc_h\|_{L^\infty(\hat E)} \le \hat C\frac{h_{S_1}}{|E|^{\frac{1}{2}}} |\vv|_{H^1( E)}.
	\end{equation}
	Since $h_{S_1} < h_E$, we immediately deduce from \eqref{eq:bddhatc} the first two inequalities in \eqref{eq:localhat_c}. Finally, the third inequality follows from \eqref{eq:bddhatc} and
	$$ \bigg( \frac{\sigma_e}{h_e}  \bigg)^{\frac{1}{2}} \|\vc_{ h }\|_{L^2(e)} \le \bigg( \frac{\sigma_e}{h_e}  \bigg)^{\frac{1}{2}} |e|^{\frac{1}{2}} \|\vc_h\|_{L^\infty(E)}.
	$$
	That completes the proof of the lemma.
\end{proof}

As a consequence of Lemma \ref{lem:stab-correction} we have the following bounds:
\begin{equation}
\label{eq:globalhat_c}
\|\vc_{ h }\|_{L^2(\Omega)} \le \hat C\,h\, |\vv|_{H^1(\Omega)},\quad  \| \vc_{h }\|_{V_h}\le \hat C|\vv|_{H^1(\Omega)}.
\end{equation}
Finally, since the construction  of Lemma \ref{lem:const-correction}
yields a unique  correction, it is easy to check that the mapping $\vv \mapsto \vc_{ h }$  defines a linear operator from $V_{ h }$ into itself, i.e., $\vc_{ h } = \vc_{ h }(\vv)$.

On the other hand, we infer from standard approximation properties of $R_h$  and the regularity of the mesh, that
\begin{align}
\label{eq:localRh}
\begin{aligned}
\| \vv - R_h(\vv)\|_{L^2(E)} \le \hat C\,h_E\, &|\vv|_{H^1(E)},\quad  |R_h(\vv)|_{H^1(E)}\le \hat C|\vv|_{H^1(E)},\\
\bigg(\frac{\sigma_e}{h_e} \bigg)^{\frac{1}{2}}\|[R_h(\vv)]_e\|_{L^2(e)} &\le  \hat C  \big(|\vv|_{H^1(E_1)} + |\vv|_{H^1(E_2)}\big)
\end{aligned}
\end{align}
and
\begin{equation}
\label{eq:globalRh}
\| \vv- R_h(\vv)\|_{L^2(\Omega)} \le \hat C\,h\, |\vv|_{H^1(\Omega)},\quad  \| R_h(\vv)\|_{V_h}\le \hat C|\vv|_{H^1(\Omega)}.
\end{equation}
Thus $\Pi_h(\vv) = R_h(\vv) - \vc_h(\vv)$ satisfies the conditions \eqref{eqn:Fort1} and \eqref{eqn:Fort2} of Lemma \ref{thm:lemFort}. This proves the inf-sup condition  as stated in the next theorem.

\begin{thm}
	\label{thm:inf-sup3D}
	Let the family of hexahedra $\Th$ be regular in the sense defined above. Then the form $b_{2h}$ defined in \eqref{eq:b2hnon} with the pair spaces $V_h$ and $Q_h$ for $k=1$, see \eqref{eq:Vhnon-k} and
	\eqref{eq:Qhnon-k}, satisfies the inf-sup condition \eqref{infsup:b2h} with a constant $\beta^*>0$ independent of $h$.
\end{thm}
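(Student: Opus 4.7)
My plan is to apply Fortin's lemma (Lemma \ref{thm:lemFort}) with the operator $\Pi_h$ defined by $\Pi_h(\vv) := R_h(\vv) - \vc_h(\vv)$, where $R_h$ is the local $L^2$ projection onto $V_h$ and $\vc_h(\vv) \in V_h$ is the correction constructed in Lemma \ref{lem:const-correction}. Since all the technical ingredients have already been established in the preceding subsection, the proof really amounts to assembling the pieces.

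First I would verify the Fortin commutativity property \eqref{eqn:Fort1}. By construction, $\vc_h(\vv)$ satisfies the local condition \eqref{eqn:corrct3} on every face of every element, which we saw was a sufficient condition for the identity \eqref{eqn:corrct1}, i.e., $b_{2h}(\vc_h(\vv), q_h) = b_{2h}(R_h(\vv) - \vv, q_h)$ for every $q_h \in Q_h$. Therefore
\[
b_{2h}(\Pi_h(\vv) - \vv, q_h) = b_{2h}(R_h(\vv) - \vv, q_h) - b_{2h}(\vc_h(\vv), q_h) = 0
\quad \forall\, q_h \in Q_h,
\]
which is exactly \eqref{eqn:Fort1}. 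Linearity of $\Pi_h$ follows from the linearity of $R_h$ and of the correction operator $\vv \mapsto \vc_h(\vv)$ noted just after Lemma \ref{lem:stab-correction}.

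Next I would verify the stability bound \eqref{eqn:Fort2}. Combining the triangle inequality with the global estimates \eqref{eq:globalhat_c} for $\vc_h(\vv)$ and \eqref{eq:globalRh} for $R_h(\vv)$ yields
\[
\|\Pi_h(\vv)\|_{V_h} \le \|R_h(\vv)\|_{V_h} + \|\vc_h(\vv)\|_{V_h} \le \hat C |\vv|_{H^1(\Omega)}
\]
for all $\vv \in H^1_0(\Omega)^d$, with a constant $\hat C$ independent of $h$. This is precisely \eqref{eqn:Fort2}. Fortin's lemma then yields the uniform inf-sup condition \eqref{infsup:b2h} with some $\beta^* > 0$ independent of $h$.

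The genuine difficulty — namely constructing a local correction whose face moments restore the jump/normal flux information lost in the $L^2$ projection, while remaining stable in the discrete norm $\|\cdot\|_{V_h}$ — has already been absorbed into Lemmas \ref{lem:const-correction} and \ref{lem:stab-correction}; the invertibility of the $6\times 6$ system on the reference element (which used the geometric fact that the three faces adjacent to $\hat\va_4$ provide linearly independent moment data for a $\mathbb{P}_1$ polynomial), together with the compactness argument yielding the uniform bound \eqref{eqn:norm.invM} on $\vM_{\hat E}^{-1}$, is what powers the whole proof. With those in hand, the theorem itself is a one-line consequence of Fortin's lemma.
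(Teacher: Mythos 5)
Your proposal is correct and follows essentially the same route as the paper: the paper's proof of Theorem \ref{thm:inf-sup3D} is exactly the assembly of $\Pi_h(\vv)=R_h(\vv)-\vc_h(\vv)$, with \eqref{eqn:Fort1} following from the sufficiency of \eqref{eqn:corrct3} for \eqref{eqn:corrct1} and \eqref{eqn:Fort2} from the global bounds \eqref{eq:globalhat_c} and \eqref{eq:globalRh}, concluded via Fortin's lemma (Lemma \ref{thm:lemFort}). Nothing essential is missing.
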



\subsubsection{A bound on the pressure} \label{subsubsec:DG-pressurebound}

As usual, the inf-sup condition \eqref{infsup:b2h} yields a bound on the pressure. Indeed, it follows from the first equation of \eqref{pb:nonconfscheme} together with \eqref{eq:bdd.ah}, \eqref{eq:bdd.dh}, \eqref{eq:disc-Kornbis} and \eqref{eq:Lpbd} that
$$ |b_{2h}(\vv_h,p_h)| \le C\big(\|\vT_h\|_{L^2(\Omega)} +  \|\vu_h\|_{V_h}+  \|\vu_h\|_{V_h}^2 + \|\vf \|_{L^2(\Omega)}\big)  \|\vv_h\|_{V_h}. $$
Then \eqref{eqn:4rthbdd} implies, with a constant $C$ independent of $h$ (but depending on $\alpha$), that
$$ |b_{2h}(\vv_h,p_h)| \le C\, \|\vv_h\|_{V_h} \quad \forall\, \vv_h \in V_h. $$
With the inf-sup condition \eqref{infsup:b2h}, this implies that
\begin{equation} \label{eq:bddpDG}
\|p_h\|_{L^2(\Omega)} \le C,
\end{equation}
for another constant $C$ independent of $h$.

\subsubsection{Existence and convergence} \label{subsec:conv_DG}

The proof of existence of a solution of \eqref{pb:nonconfscheme} is the same as in the conforming case. Recall that the case of interest is $k=1$, which is assumed for the remainder of this subsection, but all of what follows can be straightforwardly extended to a general polynomial degree $k\ge 1$ as long as the inf-sup condition \eqref{infsup:b2h} holds. First, the problem is reduced to one equation by testing the first equation of \eqref{pb:nonconfscheme} with $\vv_h \in V_{h,0}$ and  by observing that the second equation determines  for each $\vu_h \in V_h$  a unique $\vT_h$ in $M_h$. This is expressed by writing $\vT_h = {\mathcal G}_{h,DG}(\vu_h)$. Then, \eqref{pb:nonconfscheme} is equivalent to finding a $\vu_h \in V_{h,0}$ such that
\begin{equation} \label{eq:reducedDG}
d_h(\vu_h;\vu_h,\vv_h) + b_{1h}({\mathcal G}_{h,DG}(\vu_h),\vv_h) + J_h(\vu_h,\vv_h)  =  \int_{\Omega}\vf\cdot\vv_h\quad  \forall\, \vv_h\in V_{h,0}.
\end{equation}
By means of the \emph{a priori} estimates \eqref{eqn:4rthbdd}, existence of a solution is deduced by Brouwer's fixed point theorem.

Regarding convergence, the \emph{a priori} estimates \eqref{eqn:4rthbdd} and \eqref{eq:bddpDG} together with \eqref{eq:strng.lim} imply that there exist functions $\bar \vT \in L^2(\Omega)_{{\rm sym}}^{d\times d}$, $\bar \vu \in H^1_0(\Omega)^d$, and $\bar p \in L^2_0(\Omega)$ such that, up to subsequences,
$$ \lim_{h \to 0} \|\vu_h - \bar \vu\|_{L^q(\Omega)} = 0\quad  \mbox{with }1\leq q < \infty\; \mbox{ if }\, d=2, \mbox{ and }1 \leq q < 6\; \mbox{ if }\, d=3,$$
$$ \lim_{h \to 0}\vT_h =  \bar \vT \quad \mbox{weakly in } L^2(\Omega)^{d \times d}, $$
and
$$ \lim_{h \to 0} p_h = \bar p \quad \mbox{weakly in } L^2(\Omega). $$
However, in order to pass to the limit in the equations of the scheme, following~\cite{ref:DipietroErn}, we need to introduce discrete differential operators related to distributional differential operators. These are $G_h^{\rm sym}(\vv_h)\in M_h$ and $G_h^{\rm div}(\vv_h) \in \Theta_h$,  defined for all $\vv_h \in V_h$  by, respectively,
\begin{equation} \label{eq:Ghsym}
\int_\Omega G_h^{\rm sym}(\vv_h) :  \vR_h = b_{1h}(\vR_h,\vv_h) = \sum_{E\in \Th}\int_E \vD(\vv_h):\vR_h - \sum_{e \in \Gamma_h}  \int_e [\vv_h]_e\cdot\{ \vR_h \}_e \vn_e \quad \forall\,  \vR_h \in M_h,
\end{equation}
\begin{equation} \label{eq:Ghdiv}
\int_\Omega G_h^{\rm div}(\vv_h)\,  r_h = b_{2h}(\vv_h, r_h) = - \sum_{E\in \Th}\int_E  r_h \di(\vv_h) + \sum_{e \in \Gamma_h}  \int_e [\vv_h]_e \cdot \vn_e \{ r_h \}_e \quad  \forall\, r_h \in \Theta_h,
\end{equation}
where
$$ \Theta_h =\{ \theta_h \in L^2(\Omega): \, \restriction{ \theta_h }{E} \in \polP_1\quad \forall\, E \in \Th\}. $$
The polynomial degree one in this space is convenient for proving the convergence of the nonlinear term; see \eqref{eq:Gh.deg1}.
The straightforward scaling argument used in proving Proposition \ref{prop:balance} shows that
\begin{equation} \label{eq:bddGhsym}
\|G_h^{\rm sym}(\vv_h)\|_{L^2(\Omega)} \le  C\,\|\vv_h\|_{V_h}\quad \forall\, \vv_h \in V_h,
\end{equation}
and
$$ \|G_h^{\rm div}(\vv_h)\|_{L^2(\Omega)} \le \Big(\sum_{E\in \Th} \|\di(\vv_h)\|^2_{L^2( E )}\Big)^{\frac{1}{2}} +  C\, J_h(\vv_h,\vv_h)^{\frac{1}{2}}, $$
and thus by \eqref{eq:disc-Kornbis}
\begin{equation} \label{eq:bddGhdiv}
\|G_h^{\rm div}(\vv_h)\|_{L^2(\Omega)} \le C\|\vv_h\|_{V_h}\quad \forall\, \vv_h \in V_h
\end{equation}
with different constants $C$ independent of $h$.
At the same time, this gives existence of these two operators. The next proposition relates  $G_h^{\rm sym}(\vu_h)$ and $\vD(\bar\vu)$. The proof is an easy extension of that written in~\cite{ref:DipietroErn}, but we include it below for the reader's convenience.

\begin{prop} \label{prop: Ghsym-eps(u)}
	Up to a subsequence, we have
	\begin{equation} \label{eq:convGhsym}
	\lim_{h \to 0}G_h^{\rm sym}(\vu_h) = \vD(\bar\vu)  \quad  \emph{weakly in } L^2(\Omega)^{d \times d}.
	\end{equation}
\end{prop}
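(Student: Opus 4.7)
The key input is \eqref{eq:bddGhsym}, which combined with the uniform bound $\|\vu_h\|_{V_h}\le C$ from \eqref{eqn:4rthbdd} shows that $G_h^{\rm sym}(\vu_h)$ is uniformly bounded in $L^2(\Omega; \mathbb{R}^{d\times d}_{{\rm sym}})$. Weak compactness extracts a further subsequence (not relabelled) converging weakly to some $\vG\in L^2(\Omega; \mathbb{R}^{d\times d}_{{\rm sym}})$, and the entire task is to identify $\vG = \vD(\bar\vu)$. Once this is done, uniqueness of the limit propagates the convergence to the full sequence in the usual fashion.

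To identify $\vG$, I would test against an arbitrary $\vvarphi \in \mathcal{C}_c^\infty(\Omega; \mathbb{R}^{d\times d}_{{\rm sym}})$, which is dense in $L^2(\Omega; \mathbb{R}^{d\times d}_{{\rm sym}})$. Let $\vR_h\in M_h$ denote the elementwise $L^2$-projection of $\vvarphi$, so that $\vR_h\to\vvarphi$ strongly in $L^2(\Omega)$ and, for smooth $\vvarphi$, uniformly with rate $O(h)$. The splitting
\[
\int_\Omega G_h^{{\rm sym}}(\vu_h):\vvarphi = \int_\Omega G_h^{{\rm sym}}(\vu_h):(\vvarphi - \vR_h) + b_{1h}(\vR_h,\vu_h)
\]
reduces the analysis to $b_{1h}(\vR_h,\vu_h)$, as the first term vanishes in the limit by the weak-strong convergence. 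Integrating by parts in each element against the smooth $\vvarphi$ (which has no jumps and vanishes near $\partial\Omega$) and rearranging the surface contributions yields
\[
b_{1h}(\vR_h,\vu_h) = -\int_\Omega\vu_h\cdot\di(\vvarphi) + \sum_{E\in\Th}\int_E\vD(\vu_h):(\vR_h-\vvarphi) + \sum_{e\in\Gamma_h}\int_e[\vu_h]_e\cdot\{\vvarphi-\vR_h\}_e\vn_e.
\]

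The first term tends to $-\int_\Omega\bar\vu\cdot\di(\vvarphi) = \int_\Omega\vD(\bar\vu):\vvarphi$ by the strong $L^2$ convergence \eqref{eq:strng.lim} together with $\bar\vu\in H^1_0(\Omega)^d$ and the symmetry of $\vvarphi$. The second term is bounded by $\|\vD(\vu_h)\|_h\,\|\vR_h-\vvarphi\|_{L^2(\Omega)}$ and vanishes since $\|\vu_h\|_{V_h}$ is uniformly bounded and $\vR_h\to\vvarphi$ in $L^2$. Combining these facts with the vanishing of the third term (addressed below) yields $\int_\Omega\vG:\vvarphi = \int_\Omega\vD(\bar\vu):\vvarphi$ for every such $\vvarphi$, and hence $\vG = \vD(\bar\vu)$.

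The main obstacle is the edge sum $\sum_{e}\int_e[\vu_h]_e\cdot\{\vvarphi-\vR_h\}_e\vn_e$, because the jumps $[\vu_h]_e$ are only controlled via the penalty $J_h(\vu_h,\vu_h)$. The natural Cauchy--Schwarz split weighted by $\sigma_e/h_e$ and $h_e/\sigma_e$ bounds it by
\[
\Big(\sum_e\frac{\sigma_e}{h_e}\|[\vu_h]_e\|_{L^2(e)}^2\Big)^{1/2}\Big(\sum_e\frac{h_e}{\sigma_e}\|\{\vvarphi-\vR_h\}_e\|_{L^2(e)}^2\Big)^{1/2},
\]
whose first factor equals $\sqrt{J_h(\vu_h,\vu_h)}\le\|\vu_h\|_{V_h}$ and is uniformly bounded, while the second is estimated by $C\|\vvarphi-\vR_h\|_{L^\infty(\Omega)}^2\sum_e h_e|e|$; the latter sum is $O(|\Omega|)$ by mesh regularity, and $\|\vvarphi-\vR_h\|_{L^\infty}\to 0$ for smooth $\vvarphi$, so the whole term vanishes. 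This closes the argument.
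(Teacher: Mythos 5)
Your proof is correct and follows essentially the same route as the paper: approximate a continuous symmetric test tensor by its elementwise projection into $M_h$ (for $k=1$ your $\vR_h$ is exactly the paper's piecewise-constant projection $P_h^0$), use the defining relation of $G_h^{\rm sym}$, integrate by parts element by element so the continuity of the test tensor handles the jump contributions, and pass to the limit via the uniform bound on $\|\vu_h\|_{V_h}$ and the strong $L^2$ convergence of $\vu_h$ to $\bar\vu$. The only differences are cosmetic — you test with $\mathcal{C}_c^\infty$ tensors rather than $H^1$ ones and you write out the face-term estimate (Cauchy–Schwarz weighted by $\sigma_e/h_e$ against $J_h$) that the paper dismisses as ``a straightforward argument.''
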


\begin{proof}
	On the one hand, the bounds \eqref{eq:bddGhsym} and \eqref{eqn:4rthbdd} imply that there exists a function $\bar \vw \in L^2(\Omega)_{\rm sym}^{ d\times d}$ such that,  up to a subsequence,
	\begin{equation} \label{eq:vw}
	\lim_{h \to 0}G_h^{\rm sym}(\vu_h) = \bar\vw  \quad \mbox{weakly in } L^2(\Omega)^{d \times d}.
 	\end{equation}
	On the other hand, take any tensor $\vF$ in $H^1(\Omega)^{d \times d}_{\rm sym}$ and let $P_h^0(\vF)$ be its orthogonal $L^2(\Omega)^{d \times d}$ projection  on constants in each $E$. We have
	$$ \Big| \int_\Omega G_h^{\rm sym}(\vu_h): \big(\vF - P_h^0(\vF)\big) \Big| \le  C \|\vu_h\|_{V_h} \|\vF - P_h^0(\vF)\|_{L^2(\Omega)}, $$
	that tends to zero with $h$.  Therefore, the definition \eqref{eq:Ghsym} of $G_h^{\rm sym}(\vu_h)$ implies that
	$$ \lim_{h \to 0} \int_\Omega G_h^{\rm sym}(\vu_h): \vF  = \lim_{h \to 0} b_{1h}(P_h^0(\vF),\vu_h) = \lim_{h \to 0}\big(b_{1h}(P_h^0(\vF)- \vF,\vu_h) + b_{1h}(\vF,\vu_h)\big) $$
	and a straightforward argument yields that the first term tends to zero.
	Hence
	$$ \lim_{h \to 0} \int_\Omega G_h^{\rm sym}(\vu_h): \vF =  \lim_{h \to 0}  b_{1h}(\vF,\vu_h) \quad \forall\, \vF \in H^1(\Omega)^{d \times d}_{\rm sym}. $$
	Now, an application of Green's formula in each $E$ gives
	$$ b_{1h}(\vF,\vu_h) = - \sum_{E\in \Th}\int_E \vu_h \cdot \di(\vF). $$
	Therefore
	$$ \lim_{h \to 0} \int_\Omega G_h^{\rm sym}(\vu_h): \vF = -\int_\Omega \bar \vu \cdot \di(\vF) =  \int_{\Omega} \vD(\bar\vu) :  \vF\quad \forall\, \vF \in H^1_0(\Omega)^{d \times d}_{\rm sym}. $$
	A comparison  with \eqref{eq:vw} and uniqueness of the limit yield
	$$ \vD(\bar\vu) =  \bar \vw, $$
	thus proving \eqref{eq:convGhsym}.
\end{proof}

\begin{remark} \label{rem:baruH10}
	{\rm The fact that $ \bar \vu $ belongs to $H^1_0(\Omega)^d$ is an easy consequence of the above proof.}
\end{remark}

A similar argument to the one in Proposition \ref{prop: Ghsym-eps(u)} gives that
\begin{equation} \label{eq:limGhdiv}
\lim_{h \to 0}G_h^{\rm div}(\vu_h) = \di(\bar\vu)  \quad \mbox{weakly in } L^2(\Omega).
\end{equation}
Hence, by passing to the limit in the last equation of \eqref{pb:nonconfscheme}, we immediately deduce that $\di( \bar \vu ) = 0$; thus $ \bar \vu $ belongs to $\Vdiv$  and satisfies the third equation of \eqref{pb:weak_cont}.

In the next theorem, these results are used to show that the limit satisfies the remaining equations of \eqref{pb:weak_cont}.

\begin{thm} \label{thm:identification}
	Let the family of hexahedra $\Th$ be regular in the sense defined above. Then the triple $( \bar \vT, \bar \vu , \bar p)$ solves \eqref{pb:weak_cont}.
\end{thm}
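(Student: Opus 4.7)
The plan is to pass to the limit in each of the three equations of \eqref{pb:nonconfscheme} with suitably chosen test functions and thereby recover the three equations of \eqref{pb:weak_cont} for the limit triple $(\bar\vT, \bar\vu, \bar p)$. The incompressibility constraint $\di(\bar\vu)=0$ is already available from \eqref{eq:limGhdiv}, so only the momentum equation and the nonlinear constitutive equation remain. I would proceed in three steps, in that precise order because each uses the preceding ones.

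\textbf{Step 1 (momentum equation).} For a smooth $\vv \in C^\infty_c(\Omega)^d$, I would construct an approximation $\vv_h \in V_h$ (for instance a broken $L^2$ projection or nodal interpolant) satisfying $\vv_h \to \vv$ strongly in $L^p(\Omega)^d$ for the relevant exponents, $G_h^{\rm sym}(\vv_h) \to \vD(\vv)$ and $G_h^{\rm div}(\vv_h)\to\di(\vv)$ strongly in $L^2$, $J_h(\vv_h,\vv_h)\to 0$, and $\|\vv_h\|_{V_h}$ bounded. Testing the first equation of \eqref{pb:nonconfscheme} with $\vv_h$, the convective term is controlled via \eqref{eq:bdd.dh} using strong $L^4$ convergence of $\vu_h$ together with the uniform bounds \eqref{eqn:4rthbdd}; the stress and pressure terms are handled by writing $b_{1h}(\vT_h,\vv_h) = \int_\Omega \vT_h : G_h^{\rm sym}(\vv_h)$ and $b_{2h}(\vv_h,p_h) = \int_\Omega G_h^{\rm div}(\vv_h)\, p_h$ and pairing weak convergence of $\vT_h,p_h$ with strong convergence of $G_h^{\rm sym}(\vv_h),G_h^{\rm div}(\vv_h)$; and $J_h(\vu_h,\vv_h)\to 0$ follows from Cauchy--Schwarz. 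Density of $C^\infty_c(\Omega)^d$ in $V$ extends the identity to all $\vv \in V$, yielding the first equation of \eqref{pb:weak_cont}.

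\textbf{Step 2 (energy identity).} Testing the first equation of \eqref{pb:nonconfscheme} with $\vv_h = \vu_h$, using the antisymmetry \eqref{eq:antisym-non} and the discrete incompressibility, and then testing the second equation with $\vS_h = \vT_h$, gives
\begin{equation*}
\alpha \|\vT_h\|_{L^2(\Omega)}^2 + \gamma \int_\Omega \mu(|\vT_h|)|\vT_h|^2 + J_h(\vu_h,\vu_h) = \int_\Omega \vf \cdot \vu_h \longrightarrow \int_\Omega \vf \cdot \bar\vu,
\end{equation*}
the limit following from the strong convergence of $\vu_h$. Pairing the momentum equation from Step 1 with $\vv = \bar\vu \in \Vdiv$ gives $\int_\Omega \bar\vT:\vD(\bar\vu) = \int_\Omega \vf \cdot \bar\vu$. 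Combined with weak lower semicontinuity of $\|\cdot\|_{L^2(\Omega)}^2$ and the nonnegativity of $J_h$ and $\mu$, this provides the crucial bound
\begin{equation*}
\limsup_{h\to 0}\, \gamma \int_\Omega \mu(|\vT_h|)|\vT_h|^2 \;\le\; \int_\Omega \bar\vT:\vD(\bar\vu) - \alpha \|\bar\vT\|_{L^2(\Omega)}^2.
\end{equation*}

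\textbf{Step 3 (Minty's trick for the nonlinearity).} By \eqref{eqn:mu_pos_C1}, the sequence $\mu(|\vT_h|)\vT_h$ is uniformly bounded in $L^\infty$, so up to a further subsequence it converges weakly in $L^2$ to some $\boldsymbol{\chi}$. Passing to the limit in the second equation of \eqref{pb:nonconfscheme} tested with $\vS_h \in M_h$ strongly approximating $\vS \in M$, and invoking Proposition \ref{prop: Ghsym-eps(u)}, yields $\alpha \bar\vT + \gamma \boldsymbol{\chi} = \vD(\bar\vu)$ in $M$. Pairing with $\bar\vT$ and using Step 2 gives $\gamma \int_\Omega \boldsymbol{\chi} : \bar\vT \ge \limsup \gamma \int_\Omega \mu(|\vT_h|)|\vT_h|^2$. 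The monotonicity \eqref{eqn:mu_mon}, applied as $\int_\Omega (\mu(|\vT_h|)\vT_h - \mu(|\vS|)\vS):(\vT_h-\vS)\ge 0$ for arbitrary $\vS \in M$, together with this energy control, passes to the $\limsup$ to give
\begin{equation*}
\int_\Omega (\boldsymbol{\chi} - \mu(|\vS|)\vS):(\bar\vT - \vS) \ge 0 \quad \forall\, \vS \in M.
\end{equation*}
Setting $\vS = \bar\vT - t\vR$ for arbitrary $\vR \in M$ and $t > 0$, dividing by $t$, and letting $t \to 0^+$ (using continuity of $\mu$ and dominated convergence thanks to \eqref{eq:mumax}) yields $\int_\Omega (\boldsymbol{\chi} - \mu(|\bar\vT|)\bar\vT):\vR \ge 0$; replacing $\vR$ by $-\vR$ identifies $\boldsymbol{\chi} = \mu(|\bar\vT|)\bar\vT$, establishing the second equation of \eqref{pb:weak_cont}.

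The main obstacle is Step 3: only weak $L^2$ convergence of $\vT_h$ is known a priori, so the nonlinear term cannot be passed to the limit directly. Minty's trick resolves this, but depends on the sharp $\limsup$ bound of Step 2, which in turn requires the limit momentum equation to be already in hand; the auxiliary tensor strategy used in Theorem \ref{thm:uncondconvh} could be adapted as an alternative, but Minty's trick is arguably cleaner here since it circumvents the need to compare $\vT_h$ to $\G_{h,DG}(\bar\vu)$ directly.
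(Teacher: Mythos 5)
Your argument is correct, and its Steps 2--3 take a genuinely different route from the paper for the hardest point, the identification of the nonlinear constitutive relation. The paper never introduces the weak limit $\boldsymbol{\chi}$ of $\mu(|\vT_h|)\vT_h$: instead it forms the auxiliary tensors $\tilde\vT^\bd=\G(\bar \vu)$ and $\tilde\vT_h={\cal G}_{h,DG}(\bar \vu)$, proves $\tilde\vT_h\to\tilde\vT^\bd$ strongly in $L^2(\Omega)$ using the approximation property of $M_h$ and monotonicity, and then combines \eqref{eqn:mu_mon} with the bound $\lim_{h\to 0} b_{1h}(\vT_h,\vu_h)\le\int_\Omega\vD(\bar \vu):\bar\vT$ (your Step 2 in disguise: it also comes from the discrete energy identity, the nonnegativity and boundedness of $J_h$, and the limit momentum equation tested with $\bar \vu$) to conclude $\|\vT_h-\tilde\vT_h\|_{L^2(\Omega)}\to0$, hence $\bar\vT=\G(\bar \vu)$. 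Your Minty-type argument replaces this comparison by the classical hemicontinuity trick applied to $\boldsymbol{\chi}$; it is self-contained and avoids the discrete auxiliary problem for ${\cal G}_{h,DG}(\bar \vu)$, but it delivers only the identification of the limit, whereas the paper's route yields strong $L^2(\Omega)$ convergence of the discrete stresses as a by-product (the nonconforming analogue of Theorem \ref{thm:uncondconvh}). One caveat on your Step 1, which otherwise coincides with the paper's: passing to the limit in $d_h(\vu_h;\vu_h,\vv_h)$ does not follow from the bound \eqref{eq:bdd.dh} alone. The paper rewrites the form through its antisymmetry as in \eqref{eq:dh.deg1}, uses $G_h^{\rm div}(\vu_h)\rightharpoonup\di(\bar \vu)=0$ for the divergence contribution, and invokes the $O(h_e^2)$ face estimate valid for $W^{2,\infty}$ test functions to make the interior-face term vanish, before concluding by density; the same care is needed in your version, so state it (or import the paper's computation) rather than appealing only to the continuity bound.
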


\begin{proof}
	The proof proceeds in two steps.

	\textbf{Step 1}. Let us start with the first equation of \eqref{pb:nonconfscheme}. Take a function $\vv \in \calD(\Omega)^d$ and let $\vv_h \in V_h$ be the $L^2(\Omega)^d$ orthogonal projection of $\vv$ on $\polP_1^d$ in each element. It is easy to check that
	$$ \lim_{h \to 0}G_h^{\rm sym}(\vv_h) = \vD(\vv) \quad \mbox{strongly in } L^2(\Omega)^{d\times d}. $$
	Therefore the weak convergence of $\vT_h$ and the definition of $G_h^{\rm sym}(\vv_h)$ imply that
	$$ \lim_{h \to 0} b_{1h}(\vT_h,\vv_h)  = \int_{\Omega} \vD(\vv):\bar \vT. $$
	Similarly,
	$$ \lim_{h \to 0}b_{2h}(\vv_h,p_h) = -\int_\Omega \bar p \di(\vv). $$
	Also
	$$ \lim_{h \to 0} J_h(\vu_h,\vv_h) = 0. $$
	As the right-hand side tends to $\int_{\Omega}\vf\cdot\vv$, it remains to examine $d_h(\vu_h;\vu_h,\vv_h)$. Recall that
	$$ d_h(\vu_h;\vu_h,\vv_h) = \sum_{E \in \Th} \int_E  \left[ (\vu_h\cdot\nabla)\vu_h  \right] \cdot \vv_h -  \frac{1}{2} b_{2h}(\vu_h, \vu_h \cdot \vv_h)- \sum_{e \in  \Gamma_h^i} \int_e\{\vu_h\}_e\cdot \vn_e [\vu_h]_e \cdot \{\vv_h\}_e. $$
	Thanks to the antisymmetry of $d_h$, we have
	\begin{equation} \label{eq:dh.deg1}
	d_h(\vu_h;\vu_h,\vv_h) = -\sum_{E \in \Th} \int_E  [ (\vu_h\cdot\nabla)\vv_h  ] \cdot \vu_h  + \frac{1}{2} b_{2h}(\vu_h, \vu_h \cdot \vv_h)+ \sum_{e \in  \Gamma_h^i } \int_e\{\vu_h\}_e\cdot \vn_e [\vv_h]_e \cdot \{\vu_h\}_e.
	\end{equation}
	For the first term, the strong convergence of $\vu_h$ in $L^4(\Omega)^d$ and the strong convergence of the broken gradient $\nabla_h\vv_h$ in $L^2(\Omega)^{ d\times d }$ imply that
	$$ -\lim_{h \to 0} \sum_{E \in \Th} \int_E [ (\vu_h\cdot\nabla)\vv_h  ] \cdot \vu_h = -\int_\Omega [ (\bar \vu\cdot\nabla)\vv ] \cdot \bar \vu = \int_\Omega  [ (\bar \vu\cdot\nabla) \bar \vu   ] \cdot  \vv , $$
	since $\bar \vu \in \Vdiv$. For the second term, take any piecewise constant  approximation $\bar \vv_h$ of $\vv$. Then
	$$ b_{2h}(\vu_h, \vu_h \cdot \vv_h) = b_{2h}(\vu_h, \vu_h \cdot (\vv_h-\bar \vv_h)) + b_{2h}(\vu_h, \vu_h \cdot \bar \vv_h). $$
	The boundedness of $\vu_h$ in $V_h$ and the convergence to zero of $\vv_h-\bar \vv_h$ in $L^\infty (\Omega)^d$ imply that the first term tends to zero. For the second term, we deduce from the definition of $G_h^{\rm div}(\vu_h)$ that
	\begin{equation} \label{eq:Gh.deg1}
	b_{2h}(\vu_h, \vu_h \cdot \bar \vv_h) = \int_\Omega  G_h^{\rm div}(\vu_h) (\vu_h \cdot \bar \vv_h).
	\end{equation}
	As $\di(\bar \vu) = 0$, $G_h^{\rm div}(\vu_h)$ tends to zero weakly in $L^2(\Omega)$. Then the strong convergence of $\vu_h$ in $L^2(\Omega)^d$ and that of $\bar \vv_h$ in $L^\infty (\Omega)^d$ show that this second term tends to zero. It remains to examine the last term of \eqref{eq:dh.deg1}. Here we use the fact that, for any $\vv \in W^{2,\infty}(\Omega)^d$,
	$$ \|\vv_h -\vv\|_{L^\infty(e)} \le C h_e^2 | \vv|_{W^{2,\infty}(\Omega)}. $$
	This, with the boundedness of $\vu_h$ in $V_h$, gives that this last term tends to zero.
	Thus, we conclude that
	$$ \lim_{h \to 0}d_h(\vu_h;\vu_h,\vv_h) = \int_\Omega [(\bar \vu\cdot\nabla)\bar \vu]\cdot \vv \quad \forall\, \vv \in W^{2,\infty}(\Omega)^d\cap H^1_0(\Omega)^d. $$
	The conclusion of these limits and a density argument is that  the triple $(\bar \vT,\bar \vu,\bar p)$ satisfies the first equation of \eqref{pb:weak_cont}
	\begin{equation} \label{eq:limit.balance}
	\int_\Omega  [ (\bar \vu\cdot\nabla)\bar \vu  ] \cdot  \vv + \int_{\Omega}  \bar \vT:\vD(\vv) -\int_\Omega \bar p \di(\vv) =  \int_{\Omega}\vf\cdot\vv \quad \forall\, \vv \in H^1_0(\Omega)^d.
	\end{equation}

	\textbf{Step 2}. The argument for recovering the constitutive relation $\bar \vT = {\cal G}(\bar \vu)$ is close to that for the conforming case, up to some changes. On the one hand, we observe that
 	$$ \lim_{h \to 0} \big( b_{1h}(\vT_h,\vu_h) + J_h(\vu_h, \vu_h) \big) = \int_\Omega \vf \cdot \bar \vu $$
	and, since $J_h(\vu_h, \vu_h)$ is positive and bounded, this implies that
 	$$ \lim_{h \to 0} b_{1h}(\vT_h,\vu_h) \le \int_\Omega \vf \cdot \bar \vu. $$
	On the other hand, we infer from \eqref{eq:limit.balance} that
 	$$ \int_{\Omega} \bar \vT:\vD(\bar \vu) = \int_{\Omega}\vf\cdot \bar \vu. $$
	Hence
	\begin{equation} \label{eq:limit.ah}
	\lim_{h \to 0} b_{1h}(\vT_h,\vu_h) \le \int_{\Omega} \vD(\bar\vu) : \bar \vT.
	\end{equation}
	Next, we set
	$$ \tilde \vT^\bd = {\cal G}(\bar \vu) $$ 
	and define $\tilde \vT_h = {\cal G}_{h,DG}(\bar \vu)$, i.e.,
	$$ \alpha\int_{\Omega}\tilde \vT_h:\vS_h+\gamma\int_{\Omega}\mu(|\tilde \vT_h|)\tilde \vT_h:\vS_h  =   b_{1h}(\vS_h,\bar\vu)  = \int_{\Omega} \vD(\bar \vu) : \vS_h \quad \forall\, \vS_h\in M_h, $$
	where the second equality holds thanks to the fact that $\bar \vu$ belongs to $H^1_0(\Omega)^d$.
	The fact that $\di(\bar \vu) = 0$ implies that the trace of $\tilde \vT^{\bd}$ is zero and justifies the above superscript. Therefore
	$$ \alpha \int_\Omega (\tilde\vT_h-\tilde\vT^\bd):\vS_h + \gamma\int_{\Omega}\big(\mu(|\tilde\vT_h|)\tilde\vT_h -\mu(|\tilde\vT^\bd|)\tilde\vT^\bd\big) :\vS_h = 0\quad \forall\, \vS_h \in M_h, $$
	and, as in the conforming case, we conclude that
	\begin{equation} \label{eq:limit.Th-tildTh}
	\lim_{h \to 0} \|\tilde\vT_h-\tilde\vT^\bd\|_{L^2(\Omega)} =0.
	\end{equation}
	Finally, the difference between the equations satisfied by $\vT_h$ and $\tilde\vT_h$ yields
	$$ \alpha \int_\Omega (\vT_h-\tilde\vT_h):\vS_h + \gamma\int_{\Omega}\big(\mu(|\vT_h|)\vT_h - \mu(|\tilde\vT_h|)\tilde\vT_h\big) :\vS_h = b_{1h}(\vS_h,\vu_h) - \int_{\Omega} \vD(\bar \vu) :  \vS_h\quad \forall\, \vS_h \in M_h. $$
	By testing this equation with $\vS_h = \vT_h-\tilde\vT_h$ and using the monotonicity property \eqref{eqn:mu_mon}, we deduce that
	\begin{equation} \label{eq:Th-tildTh}
	\alpha \| \vT_h-\tilde\vT_h\|^2_{L^2(\Omega)} \le b_{1h}(\vT_h,\vu_h)- b_{1h}(\tilde\vT_h,\vu_h) - \int_{\Omega} \vD(\bar \vu) :(\vT_h-\tilde\vT_h).
	\end{equation}
	However, by \eqref{eq:Ghsym},
	$$ b_{1h}(\tilde\vT_h,\vu_h) = \int_\Omega G_h^{\rm sym}(\vu_h) : \tilde\vT_h, $$
	and it follows from Proposition \ref{prop: Ghsym-eps(u)} and \eqref{eq:limit.Th-tildTh} that
	$$ \lim_{h \to 0} b_{1h}(\tilde\vT_h,\vu_h) = \int_{\Omega} \vD(\bar \vu) :\tilde\vT^\bd. $$
	Then, by passing to the limit in \eqref{eq:Th-tildTh}, we obtain in view of \eqref{eq:limit.ah} the inequality
	$$ \alpha \lim_{h \to 0}  \| \vT_h-\tilde\vT_h\|^2_{L^2(\Omega)} \le \int_{\Omega} \vD(\bar \vu) :\bar\vT - \int_{\Omega} \vD(\bar \vu) :\tilde\vT^\bd -  \int_{\Omega} \vD(\bar \vu) :(\bar \vT-\tilde\vT^\bd)  =0, $$
	whence
	$$ \lim_{h \to 0}  \| \vT_h-\tilde\vT_h\|_{L^2(\Omega)} =0, $$
	and uniqueness of the limit yields
	$$ \bar\vT = \tilde\vT^\bd = {\cal G}(\bar \vu). $$
	This proves that $(\bar\vT, \bar \vu)$ satisfies the second equation of \eqref{pb:weak_cont}.
\end{proof}

	
\subsection{The tetrahedral case} \label{subsec:tetra}

Here we study briefly two examples of finite element discretisations on tetrahedral meshes, the triangular case being simpler. Many of the details are skipped because they follow closely those in the previous subsection. The family of meshes $\Th$ is assumed to be regular as in \eqref{eq:reg_mesh}. Let us start with the same spaces $V_h$, $Q_h$, and $M_h$ defined on $\Th$ by \eqref{eq:Vhnon-k}, \eqref{eq:Qhnon-k}, and \eqref{eq:Mhnon-k}, respectively, and the same bilinear forms $b_{1h}(\vS_h,\vv_h)$, $d_h(\vu_h;\vv_h,\vw_h)$, and $b_{2h}(\vv_h,q_h)$ defined by  \eqref{eq:epshnon}, \eqref{eq:dhnon}, and  \eqref{eq:b2hnon}, respectively. Then the scheme is again given by \eqref{pb:nonconfscheme} and, under assumption \eqref{eq:reg_mesh}, all proofs from the previous subsections are valid in this case, except possibly the proof of the inf-sup condition. In fact, Theorem \ref{thm:inf-sup3D} holds with a much simpler proof. Indeed, take any tetrahedron $E$. Recalling that the case of interest is $k =1$, a polynomial of $\polP_1$ is uniquely determined in $E$ by its values at the centre points $\vb_e$ of its four faces $e$. Then, instead of \eqref{eqn:corrct3}, we can use the sufficient condition
\begin{equation} \label{eqn:corrct3tetra}
\restriction{\vc_h(\vb_e)}{E}= \frac{1}{|e|} \int_e \restriction{(R_h(\vv) - \vv)}{E} \quad  \forall\, E \in \Th, \quad \forall\, e \in \partial E,
\end{equation}
and this defines uniquely the correction $\vc_h$. Furthermore, thanks to \eqref{eq:reg_mesh}, the stability of this correction follows from the fact that $E$ is the image of the unit tetrahedron $\hat E$ by an invertible affine mapping whose matrix satisfies the same properties as the matrix $\vB$ used above. Thus the conclusion of Theorem \ref{thm:inf-sup3D} is valid in this case.

\medskip

As a second example, it would be tempting to use the Crouzeix--Raviart element of degree one on tetrahedra; see~\cite{CroRa}. This would be possible if the analysis did not invoke Korn's inequality (with respect to the broken symmetric gradient), because it is not satisfied by the Crouzeix--Raviart element; cf~\cite{ref:falk91}. Thus, the simplest way to bypass this difficulty is to introduce the jump penalty term $J_h(\vu_h,\vv_h)$ defined in \eqref{eq:Jh}. Let us describe this discretisation.
Again, we suppose that \eqref{eq:reg_mesh} holds. The discrete spaces $Q_h$ and $M_h$ are the same, with $k=1$, as in  \eqref{eq:Qhnon-k} and \eqref{eq:Mhnon-k}, respectively. However, instead of $V_h$, we now use the space $V_{h}^{CR}$ whose elements are also piecewise polynomials of degree one in each element, but in contrast with \eqref{eq:Vhnon-k}, they  are continuous at the centre points of all interior faces $e \in \Gamma_h^i$, and are set to zero at the centre points of all boundary faces $e \in \Gamma_h^b$. Thanks to this pointwise continuity and boundary condition,  the scheme now involves the following bilinear/trilinear forms, compare with \eqref{eq:epshnon}, \eqref{eq:dhnon}, \eqref{eq:b2hnon}:
\begin{equation} \label{eq:epshnonCR}
\int_\Omega \vD(\vv) : \vS \simeq  b_{1h}^{CR}(\vS_h,\vv_h) := \sum_{E \in \Th} \int_E \vD(\vv_h) : \vS_h ,
\end{equation}
\begin{equation} \label{eq:dhnonCR}
d_{h}^{CR}(\vu_h;\vv_h,\vw_h)  :  = \frac{1}{2}\Big[ \sum_{E \in \Th} \int_E  \left[  (\vu_h\cdot\nabla)\vv_h  \right]  \cdot \vw_h - \sum_{E \in \Th} \int_E   \left[  (\vu_h\cdot\nabla)\vw_h  \right]  \cdot \vv_h \Big],
\end{equation}
\begin{equation} \label{eq:b2hnonCR}
b_{2h}^{CR}(\vv_h,q_h)  :  = -\sum_{E \in \Th} \int_E q_h \di(\vv_h).
\end{equation}
With these new forms, analogously to \eqref{pb:nonconfscheme}, the finite element approximation of the problem reads as follows: find a triple
$(\vT_h,\vu_h,p_h) \in  M_h  \times  V_{h}^{CR} \times Q_h$ such that
\begin{alignat}{2} \label{pb:nonconfschemeCR}
\begin{aligned}
d_{h}^{CR}(\vu_h;\vu_h,\vv_h) + b_{1h}^{CR}(\vT_h,\vv_h) + b_{2h}^{CR}(\vv_h,p_h) + J_h(\vu_h,\vv_h) & =  \displaystyle{\int_{\Omega}}\vf\cdot\vv_h &&\quad \forall\, \vv_h\in V_{h}^{CR}, \\
\alpha\displaystyle{\int_{\Omega}}\vT_h:\vS_h+\gamma\displaystyle{\int_{\Omega}}\mu(|\vT_h|)\vT_h:\vS_h & =   b_{1h}^{CR}(\vS_h,\vu_h) &&\quad \forall\, \vS_h\in M_h, \\
b_{2h}^{CR}(\vu_h,q_h) & =  0 &&\quad \forall\, q_h\in Q_h.
\end{aligned}
\end{alignat}
Note that  $b_{1h}^{CR}(\vS_h,\vv_h)$ coincides with $b_{1h}(\vS_h,\vv_h)$ and $b_{2h}^{CR}(\vv_h,q_h)$ coincides with $b_{2h}(\vv_h,q_h)$ because the additional face terms vanish for elements of the space $V_{h}^{CR}$. This is not necessarily the case with $d_{h}^{CR}$ and $d_h$, but $d_{h}^{CR}$ is obviously antisymmetric and is simpler. Although the norm of the broken gradient is a norm on $V_{h}^{CR}$, the mapping $\vv_h \mapsto \|\vD(\vv_h)\|_h$ is not a norm on $V_{h}^{CR}$. According to~\cite{ref:SB03,ref:SB04}, we have instead \eqref{eq:disc-Korn} and \eqref{eq:disc-Kornbis}. That is why we use again the norm $\|\vv_h\|_{V_h}$ defined in \eqref{eq:normVh} and keep the term $J_h(\vu_h,\vv_h)$ in the first line of \eqref{pb:nonconfschemeCR}. Note however that the parameters $\sigma_e$ need not be tuned by Proposition \ref{prop:balance} since there are no surface terms in $b_{1h}^{CR}(\vT_h,\vv_h)$; thus it suffices for instance to take $\sigma_e = 1$ for each face $e$. Moreover, the analysis used for the general discontinuous elements substantially simplifies here. First, as there are no surface terms in the bilinear forms, the bounds are simpler. Next, the operator $\Pi_h$ satisfying the statement of Lemma \ref{thm:lemFort} is constructed directly by setting, for $\vv$ in $H^1_0(\Omega)^d$,
\begin{equation} \label{eqn:CRFortin}
\restriction{\Pi_h(\vv)(\vb_e)}{E}= \frac{1}{|e|} \int_e \vv \quad  \forall\, E \in \Th,\; \forall\,e \in \partial E,
\end{equation}
see~\cite{CroRa}. Clearly, as $\vv \in H^1_0(\Omega)^d$, \eqref{eqn:CRFortin} defines a piecewise polynomial function of degree one in $V_{h}^{CR}$. Finally, convergence of the scheme is derived without the discrete differential operators $G_h^{\rm sym}$ and $G_h^{\rm div}$. Indeed, property \eqref{eq:strng.lim} can be extended as is asserted in the proposition.
\begin{prop} \label{prop:weakconvH1}
	Let the family $\Th$ satisfy \eqref{eq:reg_mesh}. If $\vv_h$ is a sequence in $V_{h}^{CR}$ such that
	$$ \|\vv_h\|_{V_h} \le C $$
	with a constant $C$ independent of $h$, then there exists a function $\bar \vv \in H^1_0(\Omega)^d$ satisfying \eqref{eq:strng.lim} and
	\begin{equation} \label{eq:weak.limCR}
	\lim_{h \to 0} \vD_h(\vv_h)=  \vD(\bar \vv) \quad \mbox{weakly in}\ L^2(\Omega)^{d\times d},
	\end{equation}
	where $\vD_h$ stands for the broken symmetric gradient.
\end{prop}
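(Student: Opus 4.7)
My plan is to mimic the proof of the analogous compactness result (cited from \cite{buffa-ortner,girRivLi2016-2,ref:BartelsJensenMuller}) used for the fully discontinuous space $V_h$, but exploiting the extra structure of the Crouzeix--Raviart space (namely continuity of face means) to simplify the identification of the limit. The proof splits naturally into (i) extracting weak limits, (ii) identifying the distributional gradient, (iii) proving $\bar\vv \in H^1_0(\Omega)^d$, and (iv) upgrading to strong $L^p$ convergence.

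First I would use \eqref{eq:disc-Korn} and \eqref{eq:disc-Kornbis} together with the hypothesis $\|\vv_h\|_{V_h}\le C$ to get
$\|\vv_h\|_{L^2(\Omega)} + \|\nabla_h\vv_h\|_{L^2(\Omega)} + J_h(\vv_h,\vv_h)^{1/2} \le C$
uniformly in $h$. By weak compactness in $L^2$, along a (not relabelled) subsequence, $\vv_h \rightharpoonup \bar\vv$ in $L^2(\Omega)^d$ and $\nabla_h \vv_h \rightharpoonup \vG$ in $L^2(\Omega)^{d\times d}$ for some $\bar\vv \in L^2$ and $\vG \in L^2$; weak $L^2$ convergence of $\vD_h(\vv_h)$ to the symmetric part of $\vG$ is then automatic. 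It remains to show that $\vG = \nabla\bar\vv$ in the distributional sense and that $\bar\vv$ has trace zero.

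For identification, I would take an arbitrary $\boldsymbol\varphi \in C_c^\infty(\Omega)^{d\times d}$ and apply Green's formula element-by-element to obtain
\[
\int_\Omega \nabla_h\vv_h : \boldsymbol\varphi = -\int_\Omega \vv_h \cdot \di\boldsymbol\varphi + \sum_{e \in \Gamma_h^i} \int_e [\vv_h]_e \cdot \boldsymbol\varphi\vn_e,
\]
the boundary faces contributing nothing once $h$ is small enough for $\mathrm{supp}\,\boldsymbol\varphi$ to be separated from $\partial\Omega$. The crucial Crouzeix--Raviart property is that both traces of $\vv_h$ on any interior face $e$ are degree-one polynomials agreeing at the centroid of $e$, hence $\int_e [\vv_h]_e = 0$. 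This lets me replace $\boldsymbol\varphi\vn_e$ by $\boldsymbol\varphi\vn_e - \overline{\boldsymbol\varphi\vn_e}^{\,e}$ (its face average) and then Cauchy--Schwarz gives
\[
\Big|\sum_{e \in \Gamma_h^i} \int_e [\vv_h]_e \cdot (\boldsymbol\varphi\vn_e - \overline{\boldsymbol\varphi\vn_e}^{\,e})\Big| \le \Big(\sum_{e} \tfrac{1}{h_e}\|[\vv_h]_e\|^2_{L^2(e)}\Big)^{1/2} \Big(\sum_e h_e \|\boldsymbol\varphi - \overline{\boldsymbol\varphi}^{\,e}\|^2_{L^2(e)}\Big)^{1/2} \le C\, h\, \|\nabla \boldsymbol\varphi\|_{L^2(\Omega)},
\]
using a standard trace/Poincar\'e estimate on each face and the bound on $J_h(\vv_h,\vv_h)$. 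Passing to the limit, $\int_\Omega \vG : \boldsymbol\varphi = -\int_\Omega \bar\vv\cdot \di\boldsymbol\varphi$, so $\vG = \nabla\bar\vv$ distributionally and therefore $\bar\vv \in H^1(\Omega)^d$.

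To obtain $\bar\vv \in H^1_0(\Omega)^d$, I would extend every $\vv_h$ by zero outside $\Omega$; on boundary faces the Crouzeix--Raviart condition $\vv_h(\vb_e) = 0$ again implies $\int_e \vv_h = 0$, so repeating the previous argument with test tensors $\boldsymbol\varphi$ supported in an open set meeting $\partial \Omega$ shows the extension by zero coincides with $\bar\vv$ extended by zero, and its distributional gradient lies in $L^2(\polR^d)^{d\times d}$. Hence the extension is in $H^1(\polR^d)^d$, which is equivalent to $\bar\vv \in H^1_0(\Omega)^d$. Finally, strong $L^p$ convergence as in \eqref{eq:strng.lim} follows by the classical trick of introducing a piecewise-linear conforming reconstruction $\mathcal R_h \vv_h \in H^1_0(\Omega)^d$ obtained, for instance, by averaging vertex values of $\vv_h$ over patches; standard estimates yield $\|\vv_h - \mathcal R_h\vv_h\|_{L^2(\Omega)} \le C h \|\vv_h\|_{V_h}$ and $\|\mathcal R_h\vv_h\|_{H^1_0(\Omega)} \le C \|\vv_h\|_{V_h}$. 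Rellich--Kondrachov applied to $\mathcal R_h\vv_h$ together with the triangle inequality gives strong $L^2$ convergence $\vv_h \to \bar\vv$, and interpolation with the uniform bound in $L^q$ (for $q$ in the Sobolev-embedding range, via \eqref{eq:Lpbd} transposed to the Crouzeix--Raviart setting) upgrades this to the asserted range of $p$. The main obstacle, in my view, is not any single step but the careful handling of the boundary traces to secure membership in $H^1_0$; once the zero-mean structure of the jumps (interior and boundary) is exploited, everything else is routine.
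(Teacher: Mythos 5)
Your proof is correct and follows essentially the same route as the paper: the identification of the distributional gradient and of the homogeneous boundary condition hinges on exactly the zero-mean-jump property $\int_e [\vv_h]_e = 0$ (on interior and boundary faces) that the paper highlights when citing Crouzeix--Raviart, allowing the test tensor to be replaced by its deviation from a facewise constant and yielding an $O(h)$ consistency term. The only part you work out yourself rather than by citation is the strong $L^p$ convergence, which you obtain through a standard averaging (Oswald-type) conforming reconstruction plus Rellich--Kondrachov and interpolation, whereas the paper delegates this to the broken-space compactness results in its references; this is a routine, valid substitute.
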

The proof, contained in~\cite{CroRa}, relies on the fact that the integral average of the jump $[\vv_h]_e$ vanishes on any face $e$ and hence, for any tensor $\vF$ in $H^1(\Omega)^{d\times d}$,
$$ \int_e \vF \vn_e\cdot [\vv_h]_e = \int_e (\vF-\vC) \vn_e\cdot [\vv_h]_e \quad \forall\, \vC \in \polR^{d\times d}. $$
Thus, there is no need for $G_h^{\rm sym}$; the same is true for $G_h^{\rm div}$. This permits to pass directly to the limit in \eqref{pb:nonconfschemeCR}.


\section{Numerical illustrations} \label{sec:numerics}

We introduce two decoupled iterative algorithms. The first one is based on a Lions--Mercier decoupling strategy while the second one is a fixed point algorithm. All the algorithms are implemented using the \textit{deall.ii} library \cite{bangerth2007deal}. For simplicity, we focus on conforming finite element approximations for which an a priori error estimate has been derived in Subsection \ref{subsec:error_estimate}. Performing numerical experiments in the case of the nonconforming approximation scheme will be the subject of future work.

The general setup is the following:

\begin{itemize}
	\item Dirichlet boundary conditions are imposed on the entire domain boundary (not necessarily homogeneous);
	\item A sequence of uniformly refined meshes with square elements of diameter $h=\sqrt{2}/2^{n}$, $n=2,\ldots,6$ (level of refinement) are considered for the mesh refinement analysis;
	\item The finite element spaces $M_h$, $V_h$, and $Q_h$ consist,
	respectively, of discontinuous piecewise polynomials of degree 2, continuous piecewise polynomials of degree 2, and continuous piecewise polynomials of degree 1 (see Subsection \ref{ss:quad}).
\end{itemize}

\noindent Following \cite{ref:BGS18}, we replace the constitutive relation
\begin{equation*}
\alpha\vT^d+\gamma\mu(|\vT^d|)\vT^d-\vD(\vu) = \mathbf{0}
\end{equation*}
by
\begin{equation*}
\alpha\vT^d+\gamma\mu(|\vT^d|)\vT^d-\vD(\vu) = \vg
\end{equation*}
to design an exact solution. Then, given $\vT^d$, $\vu$ and $p$, we compute the corresponding right-hand sides $\vg$ and $\vf$ (forcing term), where we recall that
\begin{equation*}
\vf=(\vu\cdot\nabla)\vu-\frac{1}{\alpha}\di(\vD(\vu))+\nabla p+\frac{\gamma}{\alpha}\di(\mu(|\vT^d|)\vT^d).
\end{equation*}
Finally, we choose $\mu(s)=\frac{1}{\sqrt{1+s^2}}$ which corresponds to \eqref{eq:powerlaw3} with $\beta=1$ and $n=-1/2$.

\subsection{Lions--Mercier decoupled iterative algorithm} \label{subsec:L-M.Alg}

We present here an iterative algorithm to compute approximately the solution to problem \eqref{pb:weak_epsu}, which is based on the formulation \eqref{pb:NS_EL_v3}: find $(\vT_h,\vu_h,p_h)\in M_h\times V_h\times Q_h$ such that
\begin{align} \label{pb:weak_disc_epsu}
\begin{aligned}
d(\vu_h;\vu_h,\vv_h)+\frac{1}{\alpha}\displaystyle{\int_{\Omega}}\vD(\vu_h):\vD(\vv_h)-\displaystyle{\int_{\Omega}}p_h\di(\vv_h) & =  \displaystyle{\int_{\Omega}}\vf\cdot\vv_h+\frac{\gamma}{\alpha}\displaystyle{\int_{\Omega}}\mu(|\vT_h|)\vT_h:\vD(\vv_h), \\
\alpha\displaystyle{\int_{\Omega}}\vT_h:\vS_h+\gamma\displaystyle{\int_{\Omega}}\mu(|\vT_h|)\vT_h:\vS_h & =  \displaystyle{\int_{\Omega}}\vD(\vu_h):\vS_h, \\
\displaystyle{\int_{\Omega}}q_h\di(\vu_h) & =  0
\end{aligned}
\end{align}
for all $(\vS_h,\vv_h,q_h)\in M_h\times V_h\times Q_h$, where $d:V\times V\times V\rightarrow\mathbb{R}$ is defined in \eqref{def:form_d}. Note that problem \eqref{pb:weak_disc_epsu} is equivalent to problem \eqref{pb:weak_disc} analysed in Section \ref{sec:NS_ENL_App}.

To compute the solution to problem \eqref{pb:weak_disc_epsu}, we propose a decoupled algorithm based on a Lions--Mercier splitting algorithm \cite{LM79} (alternating-direction method of the Peaceman--Rachford type \cite{ref:PR55}) applied to the unknown $\vT_h$. Following the discussion in \cite[Section 7]{ref:BGS18}, the algorithm reads, for a pseudo-time step $\tau>0$:

\vspace*{0.2cm}
\noindent \emph{Initialisation}: find $(\vT_h^{(0)},\vu_h^{(0)},p_h^{(0)})\in M_h\times V_h\times Q_h$ such that
\begin{alignat}{2} \label{pb:init}
\begin{aligned}
d(\vu_h^{(0)};\vu_h^{(0)},\vv_h)+\frac{1}{\alpha}\displaystyle{\int_{\Omega}}\vD(\vu_h^{(0)}):\vD(\vv_h)-\displaystyle{\int_{\Omega}}p_h^{(0)}\di(\vv_h) & =  \displaystyle{\int_{\Omega}}\vf\cdot\vv_h &&\quad \forall\, \vv_h\in V_h, \\
\alpha\displaystyle{\int_{\Omega}}\vT_h^{(0)}:\vS_h & =  \displaystyle{\int_{\Omega}}\vD(\vu_h^{(0)}):\vS_h &&\quad \forall\, \vS_h\in M_h, \\
\displaystyle{\int_{\Omega}}q_h\di(\vu_h^{(0)}) & =  0 &&\quad \forall\, q_h\in Q_h.
\end{aligned}
\end{alignat}

\vspace*{0.2cm}
\noindent Then, for $k=0,1,\ldots,$ perform the following two steps:

\vspace*{0.2cm}
\noindent \emph{Step 1:} Find $\vT_h^{(k+\frac{1}{2})}\in M_h$ such that
\begin{equation*}
\frac{1}{\tau}\int_{\Omega}(\vT_h^{(k+\frac{1}{2})}-\vT_h^{(k)}):\vS_h+\gamma\int_{\Omega}\mu(|\vT_h^{(k+\frac{1}{2})}|)\vT_h^{(k+\frac{1}{2})}:\vS_h = \int_{\Omega}\vD(\vu_h^{(k)}):\vS_h-\alpha\int_{\Omega}\vT_h^{(k)}:\vS_h \quad \forall\, \vS_h\in M_h.
\end{equation*}

\vspace*{0.2cm}
\noindent \emph{Step 2:} Find $(\vT_h^{(k+1)},\vu_h^{(k+1)},p_h^{(k+1)})\in M_h\times V_h\times Q_h$ such that
\begin{alignat}{2} \label{pb:step2}
\begin{aligned}
&d(\vu_h^{(k+1)};\vu_h^{(k+1)},\vv_h)+\frac{1}{\alpha}\displaystyle{\int_{\Omega}}\vD(\vu_h^{(k+1)}):\vD(\vv_h)-\displaystyle{\int_{\Omega}}p_h^{(k+1)}\di(\vv_h)\\
& \hspace{7.1cm} =  \displaystyle{\int_{\Omega}}\vf\cdot\vv_h +\frac{\gamma}{\alpha}\displaystyle{\int_{\Omega}}\mu(|\vT_h^{(k+\frac{1}{2})}|)\vT_h^{(k+\frac{1}{2})}:\vD(\vv_h), \\
&\frac{1}{\tau}\displaystyle{\int_{\Omega}}(\vT_h^{(k+1)}-\vT_h^{(k+\frac{1}{2})}):\vS_h+\alpha\displaystyle{\int_{\Omega}}\vT_h^{(k+1)}:\vS_h  =  \displaystyle{\int_{\Omega}}\vD(\vu_h^{(k+1)}):\vS_h-\gamma\displaystyle{\int_{\Omega}}\mu(|\vT_h^{(k+\frac{1}{2})}|)\vT_h^{(k+\frac{1}{2})}:\vS_h,  \\
&\hspace{4.55cm}\displaystyle{\int_{\Omega}}q_h\di(\vu_h^{(k+1)})  =  0
\end{aligned}
\end{alignat}
for all $(\vS_h,\vv_h,q_h)\in M_h\times V_h\times Q_h$.

\vspace*{0.2cm}
The solution to \eqref{pb:init} is obtained by first determining $\vu_h^{(0)}$ and $p_h^{(0)}$ as the solution to a standard steady-state Navier--Stokes equation (first and third equations in \eqref{pb:init}) and then by setting $\vT_h^{(0)}=\frac{1}{\alpha}\vD(\vu_h^{(0)})$.
Similarly, the solution to problem \eqref{pb:step2} can be obtained by first solving the first and third equations for $\vu_h^{(k+1)}$ and $p_h^{(k+1)}$ and then solving the second equation for $\vT_h^{(k+1)}$. A standard argument shows that the above algorithm generates uniformly bounded sequences. Thus they converge up to subsequences. However, the identification of a unique limit for the entire sequence is currently unclear.

Regarding the implementation, we make the following comments:

\begin{itemize}
	\item \emph{Stopping criterion}: For the main loop (Lions--Mercier algorithm), the stopping criterion is
\begin{equation} \label{eqn:stopLM}
\frac{\|\vT_h^{(k+1)}-\vT_h^{(k)}\|_{L^2(\Omega)}+\|\nabla(\vu_h^{(k+1)}-\vu_h^{(k)})\|_{L^2(\Omega)}+\|p_h^{(k+1)}-p_h^{(k)}\|_{L^2(\Omega)}}{\|\vT_h^{(k+1)}\|_{L^2(\Omega)}+\|\nabla\vu_h^{(k+1)}\|_{L^2(\Omega)}+\|p_h^{(k+1)}\|_{L^2(\Omega)}} \leq 10^{-5};
\end{equation}
	\item \emph{Initialisation}: We solve the Navier--Stokes system associated to problem \eqref{pb:init} using Newton's method (the iterates are indexed by $m$) 
	until the following stopping criterion is met:
	\begin{equation*}
	 \frac{\|\nabla(\vu_h^{(m+1)}-\vu_h^{(m)})\|_{L^2(\Omega)}+\|p_h^{(m+1)}-p_h^{(m)}\|_{L^2(\Omega)}}{\|\nabla\vu_h^{(m+1)}\|_{L^2(\Omega)}+\|p_h^{(m+1)}\|_{L^2(\Omega)}} \leq 10^{-6}.
	\end{equation*}
	As an initial guess, we take the solution of the associated Stokes system without the convective term. 
	
	The solution to each saddle-point system of the form
	\begin{equation*}
	\left(\begin{array}{cc}
	A & B^T \\ B & 0
	\end{array}\right)\left(\begin{array}{c}
	\mathbf{U} \\ \mathbf{P}
	\end{array}\right) = \left(\begin{array}{c}
	\mathbf{F} \\ \mathbf{G}
	\end{array}\right)
	\end{equation*}
	is obtained using a Schur complement formulation  
	$$ BA^{-1}B^T \mathbf{P} = BA^{-1} \mathbf{F} - \mathbf{G}, \qquad A\mathbf{U} = \mathbf{F}-B^T \mathbf{P}. $$
	To solve for $\mathbf P$, we use the conjugate gradient algorithm in the case of the Stokes problem and GMRES for the (linearised) Navier--Stokes problems.
	In both cases, the pressure mass matrix is used as preconditioner and the tolerance for the iterative algorithm is set to $10^{-6}\|BA^{-1}\mathbf{F}-\mathbf{G}\|_{\ell_2}$.  
	A direct method is advocated for every occurrence of $A^{-1}$ and also to obtain $\vT^{(0)}$.
	\item \emph{Step 1 (monotone part)}: $\vT_h^{(k+\frac{1}{2})}$ is the zero of the functional
	\begin{equation*}
	F(\vT_h) := \vT_h + \tau\gamma\mu(|\vT_h|)\vT_h-\tau\vD(\vu_h^{(k)})-(1-\alpha\tau)\vT_h^{(k)}.
	\end{equation*}
	Recall that discontinuous piecewise polynomial approximations are used for the stress and so $\vT_h$ is determined locally on  each element $E\in\Th$ as the solution to
	\begin{equation*}
	\int_{E} F(\vT_h):\vS_h = 0 \quad \forall\, \vS_h\in\mathbb{Q}_2.
	\end{equation*}
	We again employ Newton's method starting with $\vT_h^{(0)}=\restriction{\vT_h^{(k)}}{E}$ and use the stopping criterion
	\begin{equation*}
	\|F(\vT_h^{(m)})\|_{L^2(E)} \leq 10^{-6}\frac{\sqrt{|E|}}{\sqrt{|\Omega|}}
	\end{equation*}
	so that the global residual is less than $10^{-6}$.
	Note that in this case, it might happen that no iteration is needed (e.g. when $\gamma=0$), in which case $\restriction{\vT_h^{(k+\frac{1}{2})}}{E}=\restriction{\vT_h^{(k)}}{E}$.
	\item \emph{Step 2}: This step is similar to the initialisation step except that we take $(\vu_h^{(k)},p_h^{(k)})$ as our initial guess for Newton's method for solving the finite element approximation of the Navier--Stokes system.
\end{itemize}


\subsubsection{Case 1: smooth solution}

We consider the case $\Omega=(0,1)^2$ and
\begin{equation*}
\vT^d=\left(\begin{array}{cc}
\frac{\cos(2\pi x)-\cos(2\pi y)}{4} & 0 \\ 0 & \frac{\cos(2\pi y)-\cos(2\pi x)}{4}
\end{array}\right), \quad \vu=\left(\begin{array}{r}
-\cos(\pi x)\sin(\pi y) \\ \sin(\pi x)\cos(\pi y)
\end{array}\right), \quad
p=-\frac{\cos(2\pi x)+\cos(2\pi y)}{4}.
\end{equation*}
Note that $\vT^d$ is the deviatoric part of $\vT$ defined by
\begin{equation*}
\vT=\left(\begin{array}{cc}
\frac{\cos(2\pi x)}{2} & 0 \\ 0 & \frac{\cos(2\pi y)}{2}
\end{array}\right),
\end{equation*}
and in particular it has vanishing trace. We observe that $\vu$ is divergence-free. Moreover, the pressure satisfies $p=-\frac{1}{2}\tr(\vT)$ and has zero mean. We report in Table \ref{tab:case1_gamma0} the error for each component of the solution for the case $\alpha=1$ and $\gamma=0$, while Table \ref{tab:case1_gamma1_001} contains the results for $\alpha=\gamma=1$. Note that we use the $H^1$ semi-norm for the velocity and not the (equivalent) $L^2(\Omega)^{2\times 2}$ norm of the symmetric gradient.
\begin{table}[htbp]
\begin{center}
\begin{tabular}{|c|c|c|c|c|c|}
\hline
$n$ & $h$ & $\|\vT^d-\vT_h\|_{L^2(\Omega)}$ & $\|\nabla(\vu-\vu_h)\|_{L^2(\Omega)}$ & $\|p-p_h\|_{L^2(\Omega)}$ & iter \\
\hline
\hline
2 & 0.354 & 6.04199$\times 10^{-2}$ & 7.51266$\times 10^{-2}$ & 3.02263$\times 10^{-2}$ & 1 \\
3 & 0.177 & 1.44750$\times 10^{-2}$ & 1.82293$\times 10^{-2}$ & 6.18331$\times 10^{-3}$ & 1 \\
4 & 0.088 & 3.58096$\times 10^{-3}$ & 4.52460$\times 10^{-3}$ & 1.46371$\times 10^{-3}$ & 1 \\
5 & 0.044 & 8.92901$\times 10^{-4}$ & 1.12913$\times 10^{-3}$ & 3.60874$\times 10^{-4}$ & 1 \\
6 & 0.022 & 2.23079$\times 10^{-4}$ & 2.82155$\times 10^{-4}$ & 8.99041$\times 10^{-5}$ & 1 \\
\hline
\end{tabular}
\caption{Case 1, $\alpha=1$, $\gamma=0$, $\delta=10^{-5}$, $\tau=0.01$.} \label{tab:case1_gamma0}
\end{center}
\end{table}
\begin{table}[htbp]
\begin{center}
\begin{tabular}{|c|c|c|c|c|c|}
\hline
$n$ & $h$ & $\|\vT^d-\vT_h\|_{L^2(\Omega)}$ & $\|\nabla(\vu-\vu_h)\|_{L^2(\Omega)}$ & $\|p-p_h\|_{L^2(\Omega)}$ & iter \\
\hline
\hline
2 & 0.354 & 3.57579$\times 10^{-2}$ & 8.21275$\times 10^{-2}$ & 3.01953$\times 10^{-2}$ & 183 \\
3 & 0.177 & 7.78829$\times 10^{-3}$ & 1.86706$\times 10^{-2}$ & 6.18695$\times 10^{-3}$ & 182 \\
4 & 0.088 & 2.00882$\times 10^{-3}$ & 4.55378$\times 10^{-3}$ & 1.50017$\times 10^{-3}$ & 182 \\
5 & 0.044 & 8.86597$\times 10^{-4}$ & 1.13687$\times 10^{-3}$ & 4.91418$\times 10^{-4}$ & 182 \\
6 & 0.022 & 7.66438$\times 10^{-4}$ & 3.05389$\times 10^{-4}$ & 3.45733$\times10^{-4}$ & 182 \\
\hline
\end{tabular}
\caption{Case 1, $\alpha=\gamma=1$, $\delta=10^{-5}$, $\tau=0.01$.} \label{tab:case1_gamma1_001}
\end{center}
\end{table}
We observe in Table \ref{tab:case1_gamma1_001} that all three errors are $\mathcal{O}(h^2)$. The deterioration of the convergence rate we observe for $\vT^d$ and $p$ in Table \ref{tab:case1_gamma1_001} is due to the stopping criterion. Indeed, if we use $10^{-6}$ instead of $10^{-5}$ in the stopping criterion \eqref{eqn:stopLM} for the main loop, then for $h=0.044$ ($n=5$) we need 250 iterations and we get $$ \|\vT^d-\vT_h\|_{L^2(\Omega)}=4.66898\times 10^{-4}, \quad \|\vu-\vu_h\|_{L^2(\Omega)}=1.13118\times 10^{-3} \quad \mbox{and} \quad \|p-p_h\|_{L^2(\Omega)}=3.62581\times 10^{-4}, $$
compare with the fourth row of Table \ref{tab:case1_gamma1_001}.

We give in Tables \ref{tab:case1_gamma1_005}, \ref{tab:case1_gamma1_01} and \ref{tab:case1_gamma1_05} the results obtained when a larger pseudo-time step is used.
\begin{table}[htbp]
\begin{center}
\begin{tabular}{|c|c|c|c|c|c|}
\hline
$n$ & $h$ & $\|\vT^d-\vT_h\|_{L^2(\Omega)}$ & $\|\nabla(\vu-\vu_h)\|_{L^2(\Omega)}$ & $\|p-p_h\|_{L^2(\Omega)}$ & iter \\
\hline
\hline
2 & 0.354 & 3.57161$\times 10^{-2}$ & 8.21200$\times 10^{-2}$  & 3.02043$\times 10^{-2}$ & 47 \\
3 & 0.177 & 7.74372$\times 10^{-3}$ & 1.86697$\times 10^{-2}$ & 6.18194$\times 10^{-3}$ & 47 \\
4 & 0.088 & 1.86276$\times 10^{-3}$ & 4.55240$\times 10^{-3}$ & 1.46465$\times 10^{-3}$ & 47 \\
5 & 0.044 & 4.77556$\times 10^{-4}$ & 1.13167$\times 10^{-3}$ & 3.65463$\times 10^{-4}$ & 47 \\
6 & 0.022 & 1.72916$\times 10^{-4}$ & 2.85469$\times 10^{-4}$ & 1.06995$\times 10^{-4}$ & 47 \\
\hline
\end{tabular}
\caption{Case 1, $\alpha=\gamma=1$, $\delta=10^{-5}$, $\tau=0.05$.} \label{tab:case1_gamma1_005}
\end{center}
\end{table}
\begin{table}[htbp]
\begin{center}
\begin{tabular}{|c|c|c|c|c|c|}
\hline
$n$ & $h$ & $\|\vT^d-\vT_h\|_{L^2(\Omega)}$ & $\|\nabla(\vu-\vu_h)\|_{L^2(\Omega)}$ & $\|p-p_h\|_{L^2(\Omega)}$ & iter \\
\hline
\hline
2 & 0.354 & 3.57060$\times 10^{-2}$ & 8.21133$\times 10^{-2}$ & 3.02054$\times 10^{-2}$ & 26 \\
3 & 0.177 & 7.74150$\times 10^{-3}$ & 1.86693$\times 10^{-2}$ & 6.18213$\times 10^{-3}$ & 26 \\
4 & 0.088 & 1.85887$\times 10^{-3}$ & 4.55234$\times 10^{-3}$ & 1.46384$\times 10^{-3}$ & 26 \\
5 & 0.044 & 4.63124$\times 10^{-4}$ & 1.13153$\times 10^{-3}$ & 3.61829$\times 10^{-4}$ & 26 \\
6 & 0.022 & 1.27963$\times 10^{-4}$ & 2.84928$\times 10^{-4}$  & 9.37426$\times 10^{-5}$ & 26 \\
\hline
\end{tabular}
\caption{Case 1, $\alpha=\gamma=1$, $\delta=10^{-5}$, $\tau=0.1$.} \label{tab:case1_gamma1_01}
\end{center}
\end{table}
\begin{table}[htbp]
\begin{center}
\begin{tabular}{|c|c|c|c|c|c|}
\hline
$n$ & $h$ & $\|\vT^d-\vT_h\|_{L^2(\Omega)}$ & $\|\nabla(\vu-\vu_h)\|_{L^2(\Omega)}$ & $\|p-p_h\|_{L^2(\Omega)}$ & iter \\
\hline
\hline
2 & 0.354 & 3.57028$\times 10^{-2}$ & 8.21057$\times 10^{-2}$ & 3.02063$\times 10^{-2}$ & 10 \\
3 & 0.177 & 7.73342$\times 10^{-3}$ & 1.86606$\times 10^{-2}$ & 6.18238$\times 10^{-3}$ & 7 \\
4 & 0.088 & 1.85742$\times 10^{-3}$ & 4.55172$\times 10^{-3}$ & 1.46368$\times 10^{-3}$ & 7 \\
5 & 0.044 & 4.59753$\times 10^{-4}$ & 1.13121$\times 10^{-3}$ & 3.60876$\times 10^{-4}$ & 7 \\
6 & 0.022 & 1.15437$\times 10^{-4}$ & 2.83829$\times 10^{-4}$ & 8.99203$\times 10^{-5}$ & 7 \\
\hline
\end{tabular}
\caption{Case 1, $\alpha=\gamma=1$, $\delta=10^{-5}$, $\tau=0.5$.} \label{tab:case1_gamma1_05}
\end{center}
\end{table}
We see that the larger the pseudo-time step, the fewer the number of iterations. Moreover, for all cases $\tau=0.05$, $\tau=0.1$ and $\tau=0.5$, there is no deterioration of the convergence rate in contrast to what we observed in Table \ref{tab:case1_gamma1_001} (due to the stopping criterion). 

 
\subsubsection{Case 2: non-smooth velocity}

We consider now the $L$-shaped domain $\Omega=(-1,1)^2\setminus [0,1)^2$;  we take $\vT^d$ and $p$ as above, but here
\begin{equation*}
\vu=\left(\begin{array}{r}
y(x^2+y^2)^{\frac{1}{3}} \\ -x(x^2+y^2)^{\frac{1}{3}}
\end{array}\right),
\end{equation*}
which is divergence-free.
The results when $\alpha=1$ and $\gamma=0$ are given in Table \ref{tab:case2_gamma0} while Tables \ref{tab:case2_gamma1_001} and \ref{tab:case2_gamma1_05} contain the results for the case $\alpha=\gamma=1$ with $\tau=0.01$ and $\tau=0.5$, respectively.

\begin{table}[htbp]
\begin{center}
\begin{tabular}{|c|c|c|c|c|c|}
\hline
$n$ & $h$ & $\|\vT^d-\vT_h\|_{L^2(\Omega)}$ & $\|\nabla(\vu-\vu_h)\|_{L^2(\Omega)}$ & $\|p-p_h\|_{L^2(\Omega)}$ & iter \\
\hline
\hline
2 & 0.354 & 3.65187$\times 10^{-2}$ & 3.80529$\times 10^{-2}$ & 5.22497$\times 10^{-2}$ & 1 \\
3 & 0.177 & 5.61550$\times 10^{-3}$ & 6.85310$\times 10^{-3}$ & 1.07102$\times 10^{-2}$ & 1 \\
4 & 0.088 & 1.32332$\times 10^{-3}$ & 1.86233$\times 10^{-3}$ & 2.53671$\times 10^{-3}$ & 1 \\
5 & 0.044 & 3.95496$\times 10^{-4}$ & 5.79343$\times 10^{-4}$ & 6.25652$\times 10^{-4}$ & 1 \\
6 & 0.022 & 1.24435$\times 10^{-4}$ & 1.83765$\times 10^{-4}$ & 1.55951$\times 10^{-4}$ & 1 \\
\hline
\end{tabular}
\caption{Case 2, $\alpha=1$, $\gamma=0$, $\delta=10^{-5}$, $\tau=0.5$.} \label{tab:case2_gamma0}
\end{center}
\end{table}
\begin{table}[htbp]
\begin{center}
\begin{tabular}{|c|c|c|c|c|c|}
\hline
$n$ & $h$ & $\|\vT^d-\vT_h\|_{L^2(\Omega)}$ & $\|\nabla(\vu-\vu_h)\|_{L^2(\Omega)}$ & $\|p-p_h\|_{L^2(\Omega)}$ & iter \\
\hline
\hline
2 & 0.354 & 3.51269$\times 10^{-2}$ & 6.79039$\times 10^{-2}$ & 5.22609$\times 10^{-2}$ & 198 \\
3 & 0.177 & 4.65311$\times 10^{-3}$ & 9.59150$\times 10^{-3}$ & 1.07129$\times 10^{-2}$ & 198 \\
4 & 0.088 & 1.10623$\times 10^{-3}$ & 2.04375$\times 10^{-3}$ & 2.55916$\times 10^{-3}$ & 198 \\
5 & 0.044 & 7.88026$\times 10^{-4}$ & 6.03974$\times 10^{-4}$ & 7.14457$\times 10^{-4}$ & 198 \\
6 & 0.022 & 7.63053$\times 10^{-4}$ & 2.28218$\times 10^{-4}$ & 3.79151$\times 10^{-4}$ & 198 \\
\hline
\end{tabular}
\caption{Case 2, $\alpha=\gamma=1$, $\delta=10^{-5}$, $\tau=0.01$.} \label{tab:case2_gamma1_001}
\end{center}
\end{table}
\begin{table}[htbp]
\begin{center}
\begin{tabular}{|c|c|c|c|c|c|}
\hline
$n$ & $h$ & $\|\vT^d-\vT_h\|_{L^2(\Omega)}$ & $\|\nabla(\vu-\vu_h)\|_{L^2(\Omega)}$ & $\|p-p_h\|_{L^2(\Omega)}$ & iter \\
\hline
\hline
2 & 0.354 & 3.50999$\times 10^{-2}$ & 6.78690$\times 10^{-2}$ & 5.22585$\times 10^{-2}$ & 11 \\
3 & 0.177 & 4.56878$\times 10^{-3}$ & 9.56288$\times 10^{-3}$ & 1.07075$\times 10^{-2}$ & 8 \\
4 & 0.088 & 8.00090$\times 10^{-4}$ & 2.03639$\times 10^{-3}$ & 2.53571$\times 10^{-3}$ & 7 \\
5 & 0.044 & 2.08125$\times 10^{-4}$ & 5.91247$\times 10^{-4}$ & 6.25280$\times 10^{-4}$ & 7 \\
6 & 0.022 & 6.85220$\times 10^{-5}$ & 1.92531$\times 10^{-4}$ & 1.55842$\times 10^{-4}$ & 7 \\
\hline
\end{tabular}
\caption{Case 2, $\alpha=\gamma=1$, $\delta=10^{-5}$, $\tau=0.5$.} \label{tab:case2_gamma1_05}
\end{center}
\end{table}


\subsection{A fixed-point algorithm} \label{subsec:alt_algo}

Instead of the Lions--Mercier type algorithm introduced in Subsection \ref{subsec:L-M.Alg}, we explore the following fixed-point strategy.

\vspace*{0.2cm}
\noindent \emph{Initialisation}: $(\vT_h^{(0)},\vu_h^{(0)},p_h^{(0)})=\mathbf{0}$.

\vspace*{0.2cm}
\noindent Then for $k=0,1,\ldots$, do the following two steps.

\vspace*{0.2cm}
\noindent \emph{Step 1:} Find $(\vu_h^{(k+1)},p_h^{(k+1)})\in V_h\times Q_h$ such that
\begin{align*}
d(\vu_h^{(k+1)};\vu_h^{(k+1)},\vv_h)+\frac{1}{\alpha}\displaystyle{\int_{\Omega}}\vD(\vu_h^{(k+1)}):\vD(\vv_h)-\displaystyle{\int_{\Omega}}p_h^{(k+1)}\di(\vv_h) & =  \displaystyle{\int_{\Omega}}\vf\cdot\vv_h+\frac{\gamma}{\alpha}\displaystyle{\int_{\Omega}}\mu(|\vT_h^{(k)}|)\vT_h^{(k)}:\vD(\vv_h), \\
\displaystyle{\int_{\Omega}}q_h\di(\vu_h^{(k+1)}) & =  0
\end{align*}
for all $(\vu_h,q_h) \in V_h \times  Q_h$.

\vspace*{0.2cm}
\noindent \emph{Step 2:} Find $\vT_h^{(k+1)}\in M_h$ such that
\begin{equation*}
\alpha\int_{\Omega}\vT_h^{(k+1)}:\vS_h+\gamma\int_{\Omega}\mu(|\vT_h^{(k+1)}|)\vT_h^{(k+1)}:\vS_h = \int_{\Omega}\vD(\vu_h^{(k+1)}):\vS_h \quad \forall\, \vS_h\in M_h.
\end{equation*}
It is easy to show that this algorithm produces uniformly bounded sequences.

The solvers used for these two steps are similar to those described in Subsection \ref{subsec:L-M.Alg}. In particular, we take $(\vu_h^{(k)},p_h^{(k)})$ as initial guess for Newton's method for the finite element approximation of the Navier--Stokes system, except when $k=0$, in which case we use the solution of the associated Stokes problem.

The results obtained using the stopping criterion \eqref{eqn:stopLM} are given in Table \ref{tab:case1_alt_algo}. There are similar to those obtained in Table \ref{tab:case1_gamma1_05}.

\begin{table}[htbp]
\begin{center}
\begin{tabular}{|c|c|c|c|c|c|}
\hline
$n$ & $h$ & $\|\vT^d-\vT_h\|_{L^2(\Omega)}$ & $\|\nabla(\vu-\vu_h)\|_{L^2(\Omega)}$ & $\|p-p_h\|_{L^2(\Omega)}$ & iter \\
\hline
\hline
2 & 0.354 & 3.57082$\times 10^{-2}$ & 8.21052$\times 10^{-2}$ & 3.02063$\times 10^{-2}$ & 10 \\
3 & 0.177 & 7.73745$\times 10^{-3}$ & 1.86629$\times 10^{-2}$ & 6.18241$\times 10^{-3}$ & 8 \\
4 & 0.088 & 1.85777$\times 10^{-3}$ & 4.55234$\times 10^{-3}$ & 1.46369$\times 10^{-3}$ & 8 \\
5 & 0.044 & 4.60268$\times 10^{-4}$ & 1.13344$\times 10^{-3}$ & 3.60885$\times 10^{-4}$ & 8 \\
6 & 0.022 & 1.17442$\times 10^{-4}$ & 2.92590$\times 10^{-4}$ & 8.99455$\times 10^{-5}$ & 8 \\
\hline
\end{tabular}
\caption{Case 1, $\alpha=\gamma=1$, $\delta=10^{-5}$.} \label{tab:case1_alt_algo}
\end{center}
\end{table}

Concerning the computational cost when similar results are obtained, i.e., when $\tau=0.5$ for the Lions--Mercier type algorithm, we note that the latter requires the solution of one more equation per iteration, namely  the linear equation for $\vT_h^{(k+1)}$ in Step 2.

\begin{table}[htbp]
\begin{center}
\begin{tabular}{|c|r|r|r|r|r|r|}
\cline{2-7}
\multicolumn{1}{c}{ } & \multicolumn{3}{|c|}{Lions--Mercier, $\tau=0.5$} & \multicolumn{3}{c|}{Alt. Algo} \\
\hline
$h$ & iter & CPU time $[s]$ & wall time $[s]$ & iter & CPU time $[s]$ & wall time $[s]$ \\
\hline
0.354 & 10 & 10.33 & 14.76 & 10 & 7.30 & 10.22 \\
0.177 & 7 & 32.56 & 39.92 & 8 & 25.57 & 32.70 \\
0.088 & 7 & 134.28 & 162.05 & 8 & 94.76 & 111.77 \\
0.044 & 7 & 474.41 & 565.44 & 8 & 364.79 & 414.95 \\
0.022 & 7 & 1542.63 & 1695.55 & 8 & 1307.07 & 1539.07 \\
\hline
\end{tabular}
\caption{Time (in seconds) needed to meet the stopping criteria \eqref{eqn:stopLM} for the Lions--Mercier type algorithm (setup of Table \ref{tab:case1_gamma1_05}) and the algorithm of Section \ref{subsec:alt_algo}.} \label{tab:comp_cost}
\end{center}
\end{table}

\bibliographystyle{siam} 
\bibliography{limited_strain}

\begin{thebibliography}{10}

\bibitem{bangerth2007deal}
{\sc W.~Bangerth, R.~Hartmann, and G.~Kanschat}, {\em {deal.II} -- a general
  purpose object oriented finite element library}, ACM Trans. Math. Softw., 33
  (2007).

\bibitem{ref:BartelsJensenMuller}
{\sc S.~Bartels, M.~Jensen, and R.~M{\"u}ller}, {\em Discontinuous {G}alerkin
  finite element convergence for incompressible miscible displacement problems
  of low regularity}, SIAM J. Numer. Anal., 47 (2009), pp.~3720--3743.

\bibitem{ref:Boffi97}
{\sc D.~Boffi}, {\em Three {D}imensional {F}inite {E}lement {M}ethods for the
  {S}tokes {P}roblem}, SIAM J. Numer. Anal., 34 (1997), pp.~664--670.

\bibitem{ref:Bol_Hu_Mat_Pi97}
{\sc P.~Boltenhagen, Y.~Hu, E.~Matthys, and D.~Pine}, {\em Observation of bulk
  phase separation and coexistence in a sheared micellar solution}, Phys. Rev.
  Lett., 79 (1997), pp.~2359--2362.

\bibitem{ref:BGS18}
{\sc A.~Bonito, V.~Girault, and E.~S\"uli}, {\em Finite element approximation
  of a strain-limiting elastic model}, IMA J. Numer. Anal., 40 (2020),
  pp.~29--86.

\bibitem{ref:SB03}
{\sc S.~Brenner}, {\em Poincar\'e--{F}riedrichs inequalities for piecewise
  {$H\sp 1$} functions}, SIAM J. Numer. Anal., 41 (2003), pp.~306--324
  (electronic).

\bibitem{ref:SB04}
\leavevmode\vrule height 2pt depth -1.6pt width 23pt, {\em Korn's inequalities
  for piecewise {$H\sp 1$} vector fields}, Math. Comp., 73 (2004),
  pp.~1067--1087 (electronic).

\bibitem{BreSco}
{\sc S.~Brenner and L.~Scott}, {\em The mathematical theory of finite element
  methods}, vol.~15 of Texts in Applied Mathematics, Springer, New York,
  third~ed., 2008.

\bibitem{ref:Bridgman31}
{\sc P.~Bridgman}, {\em The {P}hysics of {H}igh {P}ressure}, MacMillan, 1931.

\bibitem{buffa-ortner}
{\sc A.~Buffa and C.~Ortner}, {\em Compact embeddings of broken {S}obolev
  spaces and applications}, IMA J. Numer. Anal., 29 (2009), pp.~827--855.

\bibitem{BGMRS12}
{\sc M.~{Bul\'\i\v cek}, P.~Gwiazda, J.~M\'alek, K.~Rajagopal, and
  A.~\'Swierczewska-Gwiazda}, {\em On flows of fluids described by an implicit
  constitutive equation characterized by a maximal monotone graph}, in
  Mathematical Aspects of Fluid Mechanics, J.~Robinson, J.~Rodrigo, and
  W.~Sadowski, eds., London Mathematical Society Lecture Note Series, Cambridge
  University Press, 2012, pp.~23--51.

\bibitem{BMRS14}
{\sc M.~{Bul\'\i\v cek}, J.~M\'alek, K.~Rajagopal, and E.~S\"uli}, {\em On
  elastic solids with limiting small strain: modelling and analysis}, EMS Surv.
  Math. Sci., 1 (2014), pp.~283--332.

\bibitem{ref:Burgers39}
{\sc J.~Burgers}, {\em Mechanical considerations-{M}odel
  systems-{P}henomenological theories of relaxation and of viscosity, {F}irst
  report on viscosity and plasticity, {P}repared by the committee for the study
  of viscosity, 2nd {Ed}ition}, tech. rep., Academy of Sciences at Amsterdam,
  1939.

\bibitem{Cia}
{\sc P.~Ciarlet}, {\em Basic error estimates for elliptic problems}, in
  Handbook of {N}umerical {A}nalysis, {V}ol.\ {II}, Handb. Numer. Anal., II,
  North-Holland, Amsterdam, 1991, pp.~17--351.

\bibitem{CroRa}
{\sc M.~Crouzeix and P.-A. Raviart}, {\em Conforming and nonconforming finite
  element methods for solving the stationary stokes equations}, R.A.I.R.O., 7
  (1973), pp.~33--75.

\bibitem{ref:DipietroErn}
{\sc D.~DiPietro and A.~Ern}, {\em Discrete functional analysis tools for
  discontinuous {G}alerkin methods with application to the incompressible
  {N}avier--{St}okes equations}, Math. Comput., 79 (2010), pp.~1303--1330.

\bibitem{Evans}
{\sc L.~Evans}, {\em Partial {D}ifferential {E}quations}, Graduate {S}tudies in
  {M}athematics, 19, American Mathematical Society, 2010.

\bibitem{ref:falk91}
{\sc R.~Falk}, {\em Nonconforming finite element methods for the equations of
  linear elasticity}, Math. Comput, 57 (1991), pp.~529--550.

\bibitem{girRivLi2016-2}
{\sc V.~Girault, J.~Li, and B.~Rivi{\`e}re}, {\em Strong convergence of
  discrete {DG} solutions to the heat equation}, J. Numer. Math., 24 (2016),
  pp.~235--252.

\bibitem{ref:GiR}
{\sc V.~Girault and P.-A. Raviart}, {\em Finite element methods for
  {N}avier--{S}tokes equations: Theory and algorithms}, vol.~5 of Springer
  Series in Computational Mathematics, Springer-Verlag, Berlin, 1986.

\bibitem{ref:Hu_Bolt_Mat_Pin98}
{\sc T.~Hu, P.~Boltenhagen, E.~Matthys, and D.~Pine}, {\em Shear thickening in
  low-concentration solutions of wormlike micelles. {I}{I}. {S}lip, fracture,
  and stability of the shear-induced phase}, J. Rheol., 42 (1998),
  pp.~1209--1226.

\bibitem{ref:Johnen17}
{\sc A.~Johnen, J.-C. Weill, and J.-F. Remacle}, {\em Robust and efficient
  validation of the linear hexahedral element}, Procedia Eng., 203 (2017),
  pp.~271--283.

\bibitem{ref:Knabner2003}
{\sc P.~Knabner, S.~Korotov, and G.~Summ}, {\em Conditions for the
  invertibility of the isoparametric mapping for hexahedral finite elements},
  Finite. Elem. Anal. Des., 40 (2003), pp.~159--172.

\bibitem{ref:lasisSulli03}
{\sc A.~Lasis and E.~S{\"u}li}, {\em Poincar\'e-type inequalities for broken
  {S}obolev spaces}, Internal Report 03/10, Oxford University Computing
  Laboratory, Numerical Analysis Group, Oxford, England, OX1 3QD, 2003.

\bibitem{LM79}
{\sc P.-L. Lions and B.~Mercier}, {\em Splitting algorithms for the sum of two
  nonlinear operators}, SIAM J. Numer. Anal., 16 (1979), pp.~964--979.

\bibitem{ref:Lop_Sar_Gar_Cas10}
{\sc D.~Lopez-Diaz, E.~Sarmiento-Gomez, C.~Garza, and R.~Castillo}, {\em A
  rheological study in the dilute regime of the worm-micelle fluid made of
  zwitterionic surfactant ({TDPS}), anionic surfactant ({SDS}), and brine}, J.
  Colloid Interface Sci., 348 (2010), pp.~152--158.

\bibitem{ref:Malek_Raj06}
{\sc J.~M\'alek and K.~Rajagopal}, {\em Mathematical properties of the
  equations governing the flow of fluids with pressure and shear rate dependent
  viscosities}, in Handbook of {M}athematical {F}luid {D}ynamics,
  S.~Friedlander and D.~Serre, eds., vol.~4, Elsevier, 2006.

\bibitem{ref:Maxwell1866}
{\sc J.~Maxwell}, {\em On the dynamical theory of gases}, Philos. Trans. R.
  Soc. A, 157 (1866), pp.~26--78.

\bibitem{ref:Minty}
{\sc G.~Minty}, {\em On a ``monotonicity'' method for the solution of nonlinear
  equations in {B}anach spaces}, Proc. Natl. Acad. Sci. U.S.A., 50 (1963),
  pp.~1038--1041.

\bibitem{ref:Oldroyd50}
{\sc J.~Oldroyd}, {\em On the formulation of rheological equations of state},
  Proc. R. Soc. London Ser. A, 200 (1950), pp.~523--591.

\bibitem{ref:PR55}
{\sc D.~Peaceman and H.~Rachford}, {\em The numerical solution of parabolic
  elliptic differential equations}, J. Soc. Indust. Appl. Math., 3 (1955),
  pp.~28--41.

\bibitem{ref:PerlacPrus15}
{\sc T.~Perl\'acov\'a and V.~Pr\v{u}\v{s}a}, {\em Tensorial implicit
  constitutive relations in mechanics of incompressible non-{N}ewtonian
  fluids}, J. Non-Newtonian Fluid Mech., 216 (2015), pp.~13--21.

\bibitem{ref:Mal_Pru_Raj}
{\sc V.~Pr\v{u}\v{s}a, J.~M\'alek, and K.~Rajagopal}, {\em Generalizations of
  the {N}avier--{S}tokes fluid from a new perspective}, Int. J. Eng. Sci., 48
  (2010), pp.~1907--1924.

\bibitem{Raj2003}
{\sc K.~Rajagopal}, {\em On implicit constitutive theories}, Appl. Math., 48
  (2003), pp.~279--319.

\bibitem{ref:Raj2006}
\leavevmode\vrule height 2pt depth -1.6pt width 23pt, {\em On implicit
  constitutive theories for fluids}, J. Fluid Mech., 550 (2006), pp.~243--249.

\bibitem{ref:Raj13}
\leavevmode\vrule height 2pt depth -1.6pt width 23pt, {\em A new development
  and interpretation of the {N}avier--{S}tokes fluid which reveals why the
  {S}tokes {A}ssumption is inapt}, Int. J. Non-Linear Mech., 50 (2013),
  pp.~141--151.

\bibitem{ref:Raj15}
\leavevmode\vrule height 2pt depth -1.6pt width 23pt, {\em Remarks on the
  notion of ``pressure"}, Int. J. Non-Linear Mech., 71 (2015), pp.~165--172.

\bibitem{Riviere2008}
{\sc B.~Rivi{\`e}re}, {\em Discontinuous {G}alerkin Methods for Solving
  Elliptic and Parabolic Equations: {T}heory and Implementation}, SIAM, 2008.

\bibitem{ref:LeRou_Raj13}
{\sc C.~L. Roux and K.~Rajagopal}, {\em Shear flows of a new class of power law
  fluids}, Appl. Math., 58 (2013), pp.~153--177.

\bibitem{ref:Stokes1845}
{\sc G.~Stokes}, {\em On the theories of internal friction of fluids in motion,
  and of the equilibrium and motion of elastic solids}, Trans. Cambridge Phil.
  Soc., 8 (1845), pp.~287--341.

\bibitem{Tem}
{\sc R.~Temam}, {\em Theory and {N}umerical {A}nalysis of the
  {N}avier--{S}tokes {E}quations}, North-Holland, Amsterdam, The Netherlands,
  1977.

\bibitem{ref:Zhang2005}
{\sc S.~Zhang}, {\em Subtetrahedral test for the positive {J}acobian of
  hexahedral elements}.
\newblock report.

\end{thebibliography}

\end{document}